\documentclass[11pt]{amsart}
\textheight 225mm \textwidth 165mm \topmargin -1.0cm
\oddsidemargin 1.8 cm \evensidemargin 1.8 cm \hoffset=-1.8cm

\usepackage{amsthm}
\usepackage{array}
\usepackage{amsmath}
\usepackage{enumerate}
\usepackage{tikz}
\usetikzlibrary{calc}
\usepackage{amssymb}
\usepackage{latexsym}
\usepackage{amsfonts}
\usepackage{color}
\usepackage{mathrsfs}
\usepackage{epsfig,helvet}

\theoremstyle{plain} \numberwithin{equation}{section}
\newtheorem{thm}{Theorem}[section]
\newtheorem{theorem}[thm]{Theorem}
\newtheorem{lemma}[thm]{Lemma}
\newtheorem{corollary}[thm]{Corollary}

\newtheorem{definition}[thm]{Definition}
\newtheorem{proposition}[thm]{Proposition}

\begin{document}
	\setcounter{page}{1}

	\title[ Hasan and Padhan]{Finite-dimensional nilpotent Lie superalgebras of class two and skew-supersymmetric bilinear maps}
	
	\author{IBRAHEM YAKZAN HASAN}
	\address{Centre for Applied Mathematics and Computing, Institute of Technical Education and Research  \\
		Siksha `O' Anusandhan (A Deemed to be University)\\
		Bhubaneswar-751030 \\
		Odisha, India}
	\email{ibrahemhasan898@gmail.com}
	\author[Padhan]{Rudra Narayan Padhan}
	\address{Centre for Data Science, Institute of Technical Education and Research  \\
		Siksha `O' Anusandhan (A Deemed to be University)\\
		Bhubaneswar-751030 \\
		Odisha, India}
	\email{rudra.padhan6@gmail.com, rudranarayanpadhan@soa.ac.in}

	\subjclass[2010]{Primary 17B30; Secondary 17B05.}
	\keywords{Nilpotent Lie superalgebras, skew-supersymmetric bilinear maps, generalized Heisenberg Lie superalgebras }
	\maketitle

\begin{abstract}
In this article, we discuss the category $\mathcal{SN}_2$ where the objects are 
finite-dimensional nilpotent Lie superalgebras of class two and the category 
$\mathcal{SSKE}$ where the objects are skew-supersymmetric bilinear maps. We establish relation between $\mathcal{SN}_2$ and $\mathcal{SSKE}$. As a result, we discuss the capability of nilpotent Lie superalgebras of class two.

\end{abstract}

\section{Introduction and Preliminaries}
Lie superalgebras have applications in many areas of Mathematics and Theoretical Physics as they can be used to describe supersymmetry. Kac \cite{Kac1977} gives a comprehensive description of mathematical theory of Lie superalgebras, and establishes the classification of all finite dimensional simple Lie superalgebras over an algebraically closed field of characteristic zero. In the last few years the theory of Lie superalgebras has evolved remarkably, obtaining many results in representation theory and classification. Most of the results are extension of well known facts of Lie algebras. But the classification of all finite dimensional nilpotent Lie superalgebras is still an open problem like that of finite dimensional nilpotent Lie algebras. 
	
\smallskip


Let $\mathbb{Z}_{2}=\{\bar{0}, \bar{1}\}$ be a field. A $\mathbb{Z}_{2}$-graded vector space $V$ is simply a direct sum of vector spaces $V_{\bar{0}}$ and $V_{\bar{1}}$, i.e., $V = V_{\bar{0}} \oplus V_{\bar{1}}$. It is also referred as a superspace. We consider all vector superspaces and superalgebras are over field $\mathbb{F}$ (characteristic of $\mathbb{F} \neq 2,3$). Elements in $V_{\bar{0}}$ (resp. $V_{\bar{1}}$) are called even (resp. odd) elements. Non-zero elements of $V_{\bar{0}} \cup V_{\bar{1}}$ are called homogeneous elements. For a homogeneous element $v \in V_{\sigma}$, with $\sigma \in \mathbb{Z}_{2}$ we set $|v| = \sigma$ is the degree of $v$. A  subsuperspace (or, subspace)  $U$ of $V$ is a $\mathbb{Z}_2$-graded vector subspace where  $U= (V_{\bar{0}} \cap U) \oplus (V_{\bar{1}} \cap U)$. We adopt the convention that whenever the degree function appears in a formula, the corresponding elements are supposed to be homogeneous. 
	
\smallskip 
	
A Lie superalgebra (see \cite{Kac1977, Musson2012}) is a superspace $L = L_{\bar{0}} \oplus L_{\bar{1}}$ with a bilinear mapping $ [., .] : L \times L \rightarrow L$ satisfying the following identities:
	\begin{enumerate}
		\item $[L_{\alpha}, L_{\beta}] \subset L_{\alpha+\beta}$, for $\alpha, \beta \in \mathbb{Z}_{2}$ ($\mathbb{Z}_{2}$-grading),
		\item $[x, y] = -(-1)^{|x||y|} [y, x]$ (graded skew-symmetry),
		\item $(-1)^{|x||z|} [x,[y, z]] + (-1)^{ |y| |x|} [y, [z, x]] + (-1)^{|z| |y|}[z,[ x, y]] = 0$ (graded Jacobi identity),
	\end{enumerate}
for all $x, y, z \in L$. Clearly $L_{\bar{0}}$ is a Lie algebra, and $L_{\bar{1}}$ is a $L_{\bar{0}}$-module. If $L_{\bar{1}} = 0$, then $L$ is just Lie algebra, but in general a Lie superalgebra is not a Lie algebra.  A Lie superalgebra $L$, is called abelian if  $[x, y] = 0$ for all $x, y \in L$. Lie superalgebras without the even part, i.e., $L_{\bar{0}} = 0$, are  abelian. A subsuperalgebra (or subalgebra) of $L$ is a $\mathbb{Z}_{2}$-graded vector subspace which is closed under bracket operation. The graded subalgebra $[L, L]$,  of $L$  is known as the derived subalgebra of $L$. A $\mathbb{Z}_{2}$-graded subspace $H$ is a graded ideal of $L$ if $[H, H]\subseteq L$. The ideal 
	\[Z(L) = \{z\in L : [z, x] = 0\;\mbox{for all}\;x\in L\}\] 
is a graded ideal and it is called the {\it center} of $L$. A homomorphism between superspaces $f: L \rightarrow G $ of degree $|f|\in \mathbb{Z}_{2}$, is a linear map satisfying $f(L_{\alpha})\subseteq G_{\alpha+|f|}$ for $\alpha \in \mathbb{Z}_{2}$. In particular, if $|f| = \bar{0}$, then the homomorphism $f$ is called homogeneous linear map of even degree. A Lie superalgebra homomorphism $f: L \rightarrow G$ is a  homogeneous linear map of even degree such that $f([x,y]) = [f(x), f(y)]$ holds for all $x, y \in L$.  If $H$ is an ideal of $L$, the quotient Lie superalgebra $L/H$ inherits a canonical Lie superalgebra structure such that the natural projection map becomes a homomorphism. The notions of {\it epimorphisms, isomorphisms} and {\it automorphisms} have the obvious meaning.

Throughout this article for superdimension of Lie superalgebra $L$ we simply write $\dim L=(m\mid n)$, where $\dim L_{\bar{0}} = m$ and $\dim L_{\bar{1}} = n$. Also  $A(m \mid n)$ denotes an abelian Lie superalgebra where $\dim A=(m\mid n)$. A  Lie superalgebra $L$ is said to be Heisenberg Lie superalgebra if $Z(L)=L'$ and $\dim Z(L)=1$. According to the homogeneous generator of $Z(L)$, Heisenberg Lie superalgebras can be further split into even or odd Heisenberg Lie superalgebras \cite{MC2011}. By Heisenberg Lie superalgebra we mean special Heisenberg Lie superalgebra in this article. 

\smallskip

Nayak defined Schur multiplier for Lie superalgebras \cite{Nayak2018}. If $L$ is a Lie superalgebra generated by a $\mathbb{Z}_{2}$-graded set $X =X_{\bar{0}} \cup X_{\bar{1}}$ and $\phi : X \rightarrow L$ is a degree zero map, then there exists a free Lie superalgebra $F$ and  $\psi: F \rightarrow L$ extending $\phi$. Let $R = \mbox{ker} (\psi)$. The extension, 
\begin{equation}
0 \longrightarrow R \longrightarrow F \longrightarrow L \longrightarrow 0
\end{equation} 
is called a {\it free presentation} of $L$ and is denoted by $(F, \psi)$ \cite{Musson2012}. With this free presentation of $L$, the {\it multiplier} of $L$ denoted by  $\mathcal{M}(L)$, is defined as
$$ 
\mathcal{M}(L) = \frac{[F,F]\cap R}{[F, R]}.
$$

For more details on Schur multiplier see \cite{GHLS2018, 2Nilpotent2020, Alamian2008, Batten1993,	Batten1996, Tappe, p1,Ellis1987, Ellis1991, Ellis1995,GKL2015, Hardy1998, Hardy2005, org, Kar1987, Nayak2018, SN2018b, N2, Niroomand2011, Russo2011, PMF2013, p50, hesam}. Now we list some useful results from \cite{Nayak2018}, for further use.

\begin{theorem} \label{th3.4}\cite[See Theorem 4.2, 4.3]{Nayak2018}
Every Heisenberg Lie superalgebra with even center has dimension $(2m+1 \mid n)$ and it is isomorphic to $H(m , n)=H_{\overline{0}}\oplus H_{\overline{1}}$, where
		\[H_{\overline{0}}=<x_{1},\ldots,x_{2m},z \mid [x_{i},x_{m+i}]=z,\ i=1,\ldots,m>\]
		and 
		\[H_{\overline{1}}=<y_{1},\ldots,y_{n}\mid [y_{j}, y_{j}]=z,\  j=1,\ldots,n>.\]
\end{theorem}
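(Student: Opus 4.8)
The plan is to reduce the statement to the linear algebra of non-degenerate skew-supersymmetric bilinear forms on a $\mathbb{Z}_2$-graded vector space. Let $L$ be a Heisenberg Lie superalgebra whose center is even, so that $L' = Z(L) = \mathbb{F}z$ with $z \in L_{\bar{0}}$. Since $L' \subseteq Z(L)$, the bracket annihilates $L'$, hence it factors through $\bar{L} := L/Z(L)$ and yields a well-defined bilinear map $\varphi \colon \bar{L} \times \bar{L} \to \mathbb{F}$ determined by $[x,y] = \varphi(\bar{x},\bar{y})\, z$. Graded skew-symmetry of $[\cdot,\cdot]$ passes to $\varphi$, so $\varphi$ is skew-supersymmetric. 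First I would record two facts. (i) $\varphi$ is non-degenerate: if $\bar{x}$ lies in its radical then $[x,L]=0$, i.e. $x \in Z(L)$, whence $\bar{x}=0$. (ii) Because $z$ is even, $[L_{\bar{0}},L_{\bar{1}}] \subseteq L_{\bar{1}} \cap L' = L_{\bar{1}} \cap Z(L) = 0$; consequently $\varphi$ vanishes identically on $\bar{L}_{\bar{0}} \times \bar{L}_{\bar{1}}$, its restriction to $\bar{L}_{\bar{0}}$ is an alternating form (using $\operatorname{char}\mathbb{F}\neq 2$), and its restriction to $\bar{L}_{\bar{1}}$ is a symmetric bilinear form.

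Next I would combine (i) and (ii). Since $\varphi$ has no mixed block, its radical is the direct sum of the radicals of $\varphi|_{\bar{L}_{\bar{0}}}$ and $\varphi|_{\bar{L}_{\bar{1}}}$, so non-degeneracy of $\varphi$ forces each restriction to be non-degenerate. A non-degenerate alternating form on $\bar{L}_{\bar{0}}$ can exist only if $\dim \bar{L}_{\bar{0}}$ is even, say $2m$; then a symplectic basis $\bar{x}_1,\dots,\bar{x}_{2m}$ may be chosen with $\varphi(\bar{x}_i,\bar{x}_{m+i})=1$ for $i=1,\dots,m$ and all remaining pairings zero. A non-degenerate symmetric bilinear form on $\bar{L}_{\bar{1}}$, over the base field, admits an orthonormal basis $\bar{y}_1,\dots,\bar{y}_n$ with $\varphi(\bar{y}_j,\bar{y}_j)=1$ and $\varphi(\bar{y}_j,\bar{y}_k)=0$ for $j\neq k$.

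Finally I would lift these bases: choose $x_i \in L_{\bar{0}}$ and $y_j \in L_{\bar{1}}$ mapping to $\bar{x}_i$ and $\bar{y}_j$. Then $\{x_1,\dots,x_{2m},z\}$ is a basis of $L_{\bar{0}}$ and $\{y_1,\dots,y_n\}$ is a basis of $L_{\bar{1}}$, so $\dim L = (2m+1 \mid n)$; moreover, by construction $[x_i,x_{m+i}]=z$ and $[y_j,y_j]=z$ while all other brackets of basis elements vanish, which is precisely the multiplication table of $H(m,n)$. Since $L'=Z(L)$ is central we have $[L,[L,L]]=0$, so these relations automatically verify the graded Jacobi identity; conversely one checks directly that $H(m,n)$ so defined is a Heisenberg Lie superalgebra with even center. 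Hence $x_i \mapsto x_i$, $y_j \mapsto y_j$, $z \mapsto z$ extends to the desired isomorphism $L \cong H(m,n)$. The one genuine subtlety arises in the odd part: over an arbitrary field of characteristic $\neq 2$ a non-degenerate symmetric form can only be diagonalized, not normalized to the identity, so the clean presentation $[y_j,y_j]=z$ relies on the base field being algebraically closed — or at least on every nonzero scalar being a square — and over such fields the argument above goes through verbatim.
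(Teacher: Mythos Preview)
The paper does not supply its own proof of this theorem; it is quoted verbatim from \cite{Nayak2018} and simply cited. Consequently there is nothing in the present paper to compare your argument against.

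That said, your argument is the standard one and is essentially how the result is proved in the cited source: pass to the quotient $L/Z(L)$, observe that the bracket induces a non-degenerate skew-supersymmetric $\mathbb{F}$-valued form because $Z(L)=L'$ is one-dimensional, split it into an alternating block on the even part and a symmetric block on the odd part (using that the center is even to kill the mixed block), and then invoke the canonical forms for such pairings. Your identification of the one genuine hypothesis hidden in the statement is exactly right: diagonalizing the symmetric form on $\bar{L}_{\bar{1}}$ to the identity --- i.e., achieving $[y_j,y_j]=z$ rather than $[y_j,y_j]=\lambda_j z$ with $\lambda_j\in\mathbb{F}^{\times}$ --- requires that every nonzero scalar be a square, which is not guaranteed by the paper's standing assumption $\operatorname{char}\mathbb{F}\neq 2,3$. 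Over a general field one only obtains the diagonal form, and the isomorphism class of $L$ then depends on the square classes of the $\lambda_j$. The theorem as stated is therefore correct over an algebraically closed field (the setting of Kac's classification, and implicitly of \cite{Nayak2018}), and your proof goes through in that case.
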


The multiplier and cover for Heisenberg Lie superalgebra of odd center is known.
	
\begin{theorem}\label{th3.6}\cite[See Theorem 2.8]{SN2018b}
Every Heisenberg Lie superalgebra, with odd center has dimension $(m \mid m+1)$ and it is isomorphic to $H_{m}=H_{\overline{0}}\oplus H_{\overline{1}}$, where
		\[H_{m}=<x_{1},\ldots,x_{m} , y_{1},\ldots,y_{m},z \mid [x_{j},y_{j}]=z,   j=1,\ldots,m>.\]
\end{theorem}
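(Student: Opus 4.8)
The plan is to reconstruct $L$ entirely from the single bilinear pairing its bracket induces on $L/Z(L)$, using crucially that the center being \emph{odd} forces almost every bracket to vanish. Write $Z = Z(L) = \mathbb{F}z$ with $|z| = \bar{1}$; by hypothesis $L' = Z \subseteq L_{\bar{1}}$, so $L_{\bar{0}} \cap Z = 0$. Combining the $\mathbb{Z}_{2}$-grading with $L' = Z$ gives $[L_{\bar{0}}, L_{\bar{0}}] \subseteq L_{\bar{0}} \cap L' = 0$ and $[L_{\bar{1}}, L_{\bar{1}}] \subseteq L_{\bar{0}} \cap L' = 0$, so the only surviving component of the bracket is the map $L_{\bar{0}} \times L_{\bar{1}} \to Z$. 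Since $z$ is central, this descends to a bilinear form $\beta \colon V \times W \to \mathbb{F}$, where $V := L_{\bar{0}}$ and $W := L_{\bar{1}}/Z$, characterized by $[x, y] = \beta(x, \bar{y})\, z$; thus $\beta$ encodes the bracket of $L$ completely.

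Next I would check that $\beta$ is non-degenerate. If $x \in V$ satisfies $\beta(x, W) = 0$, then $[x, L_{\bar{1}}] = 0$, and together with $[x, L_{\bar{0}}] = 0$ this yields $x \in Z \cap L_{\bar{0}} = 0$; symmetrically, if $y \in L_{\bar{1}}$ has $\beta(V, \bar{y}) = 0$, then $[L_{\bar{0}}, y] = 0$ and also $[L_{\bar{1}}, y] = 0$, so $y \in Z$, i.e.\ $\bar{y} = 0$ in $W$. Hence $\beta$ is a non-degenerate pairing of finite-dimensional spaces, forcing $\dim V = \dim W =: m$, so that $\dim L_{\bar{1}} = m+1$ and $\dim L = (m \mid m+1)$. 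One also notes $m \geq 1$, since $m = 0$ would give $L' = 0 \neq Z$.

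Finally, a non-degenerate pairing of two $m$-dimensional spaces identifies $W$ with $V^{*}$, so I can choose a basis $x_1, \dots, x_m$ of $V = L_{\bar{0}}$ together with the $\beta$-dual basis $\bar{y}_1, \dots, \bar{y}_m$ of $W$, lift the latter to elements $y_1, \dots, y_m \in L_{\bar{1}}$, and obtain a homogeneous basis $x_1, \dots, x_m, y_1, \dots, y_m, z$ of $L$ with $[x_i, y_j] = \delta_{ij}\, z$ and all other brackets among basis elements zero. Matching these with the generators of $H_m$ gives an isomorphism of Lie superalgebras: graded skew-symmetry is automatic (indeed $[y_j, x_i] = -\delta_{ij}\, z$ since $x_i$ is even), and the graded Jacobi identity holds trivially because $L' \subseteq Z(L)$ makes every iterated bracket vanish.

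The main obstacle is the non-degeneracy step --- making precise that the bracket descends to a genuinely non-degenerate pairing --- together with the bookkeeping that an odd center annihilates $[L_{\bar{0}}, L_{\bar{0}}]$ and $[L_{\bar{1}}, L_{\bar{1}}]$; after that, the first and last steps are the standard ``symplectic basis'' construction. In contrast with the even-center situation of Theorem~\ref{th3.4}, no hypothesis on the characteristic is needed here, precisely because there are no diagonal relations $[y_j, y_j] = z$ that would make the normalization genuinely quadratic.
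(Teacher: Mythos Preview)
Your argument is correct. The key observations --- that an odd one-dimensional center forces $[L_{\bar 0},L_{\bar 0}]=[L_{\bar 1},L_{\bar 1}]=0$ by grading, that the remaining bracket descends to a pairing $L_{\bar 0}\times (L_{\bar 1}/Z)\to\mathbb{F}$, and that the Heisenberg condition $Z(L)=L'$ makes this pairing non-degenerate --- are exactly the right ones, and the dual-basis construction finishes cleanly. Your remark that the odd-center case avoids the quadratic normalization issues present in Theorem~\ref{th3.4} is also apt.

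Note, however, that the present paper does \emph{not} supply its own proof of this statement: it is quoted verbatim from \cite[Theorem~2.8]{SN2018b} and used only as background. So there is no proof here to compare against; your write-up would serve as a self-contained substitute for the citation. The argument in the original reference proceeds along the same lines (reduce to a non-degenerate bilinear pairing between the even part and the odd part modulo the center, then choose adapted bases), so your approach is not materially different from the standard one.
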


\begin{definition}
A Lie superalgebra $L$ is said to be $capable$ if there exists a Lie superalgebra $H$ such that $L \cong H/Z(H)$. 
\end{definition}

The epicenter $Z^{*}(L)$ is the smallest ideal of the Lie superalgebra $L$ such that $L/Z^{*}(L)$ is capable. Now, if $Z^{*}(L)$ is trivial then $L$ is capable. Classification of capable Lie superalgebras whose derived subalgebra have dimension atmost one, have been studied in \cite{Padhandetec, GHLS2018}. For more details on capability of groups, Lie algebras and Lie superalgebras see \cite{p10,Ellis1996, Beyl1979, Baer1938, 2Nilpotent2020, Alamian2008, p52, NPP, PMF2013, p50, p53,Padhantrip,Padhantensor,PadhanISO,PadhanHASDET,PadhanCOHOM,PadhanHOM,Padhan2MULT,PadhanSPLIT}. In this paper, we discuss the capability of nilpotent Lie superalgebras of class two.

\begin{theorem}\label{thm555}\cite[Theorem 6.7]{Padhandetec} 
		$H_m$ is capable if and only if $m = 1$.
\end{theorem}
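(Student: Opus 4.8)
The plan is to prove the two implications separately, working throughout with the presentation $H_m=\langle x_1,\dots,x_m,y_1,\dots,y_m,z\mid [x_j,y_j]=z\rangle$ of Theorem~\ref{th3.6}. It is convenient to fix parities first: up to relabelling, each $x_j$ is even, each $y_j$ is odd and $z$ is odd (so that the relation $[x_j,y_j]=z$ is homogeneous), and every bracket of two basis vectors other than the $m$ relations $[x_j,y_j]=z$ vanishes; in particular $[y_j,y_j]=0$, because that bracket is even while $H_m'=Z(H_m)=\langle z\rangle$ is purely odd. In the language of the present paper, $H_m$ corresponds to the nondegenerate skew-supersymmetric bilinear map on $H_m/Z(H_m)$ valued in the one-dimensional space $\langle z\rangle$ built out of $m$ ``hyperbolic'' pairs.

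For the implication $m=1\Rightarrow H_1$ capable, I would exhibit an explicit central extension. Take $L$ to be the two-generated nilpotent Lie superalgebra of class three with homogeneous basis $a$ (even) and $b,c,d$ (odd), whose only nonzero brackets are $[a,b]=c$ and $[a,c]=d$. One checks the graded Jacobi identity directly — the only triples capable of producing a nonzero summand are the permutations of $(a,a,b)$, together with $(a,b,b)$ and $(a,c,c)$, and each of these vanishes — so $L$ is a Lie superalgebra. A short computation gives $Z(L)=\langle d\rangle$, with $\dim Z(L)=(0\mid1)$, so $L/Z(L)$ has $\dim=(1\mid2)$ and retains the single relation $[\bar a,\bar b]=\bar c$; hence $L/Z(L)\cong H_1$ by Theorem~\ref{th3.6}, and $H_1$ is capable.

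For the implication $m\ge2\Rightarrow H_m$ not capable, the plan is to show $z\in Z^{*}(H_m)$, so that $Z^{*}(H_m)=Z(H_m)\ne0$. Suppose instead $H_m\cong L/Z(L)$ for some Lie superalgebra $L$ with projection $\pi$, choose homogeneous lifts $\tilde x_j,\tilde y_j$ of $x_j,y_j$, and set $\tilde z:=[\tilde x_1,\tilde y_1]$, so $\pi(\tilde z)=z$. Since $\pi$ sends every ``missing'' bracket to $0$, we get $[\tilde x_i,\tilde x_j],\,[\tilde x_i,\tilde y_j],\,[\tilde y_i,\tilde y_j]\in Z(L)$ for $i\ne j$, $[\tilde y_j,\tilde y_j]\in Z(L)$, and $[\tilde x_j,\tilde y_j]-\tilde z\in Z(L)$ for every $j$. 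Applying the graded Jacobi identity to the triples $(\tilde x_i,\tilde x_1,\tilde y_1)$ and $(\tilde y_i,\tilde x_1,\tilde y_1)$ for $i\ge2$, two of the three summands are brackets with central elements and drop out, leaving $[\tilde x_i,\tilde z]=0$ and $[\tilde y_i,\tilde z]=0$. To handle the index $1$ one exploits $m\ge2$: rewrite $\tilde z=[\tilde x_2,\tilde y_2]-(\text{central})$ and run the same argument on $(\tilde x_1,\tilde x_2,\tilde y_2)$ and $(\tilde y_1,\tilde x_2,\tilde y_2)$, getting $[\tilde x_1,\tilde z]=[\tilde y_1,\tilde z]=0$; finally the triple $(\tilde z,\tilde x_1,\tilde y_1)$ gives $[\tilde z,\tilde z]=0$. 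Hence $\tilde z\in Z(L)$ and $z=\pi(\tilde z)=0$, a contradiction.

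The routine part is the Jacobi bookkeeping; the only genuine point of care is that once an odd argument ($\tilde y_i$ or $\tilde z$) enters, one of the three signs $(-1)^{|\cdot||\cdot|}$ becomes $-1$, but since it multiplies the whole summand it never affects the vanishing conclusions above. I expect the main obstacle to be the $m=1$ construction, specifically confirming that setting $[b,b]=0$ is consistent with the graded Jacobi identity, since that identity forces relations such as $[a,[b,b]]=2[b,[a,b]]$ with no analogue for ordinary Lie algebras; this is precisely the place where the super setting differs from the classical Heisenberg case, whose $m=1$ witness is the analogous $4$-dimensional filiform Lie algebra $\langle e_1,e_2,e_3,e_4\mid[e_1,e_2]=e_3,\ [e_1,e_3]=e_4\rangle$.
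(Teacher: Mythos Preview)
The present paper does not prove this statement: it is quoted verbatim from \cite[Theorem~6.7]{Padhandetec} and used later as an input (e.g.\ in Theorem~\ref{th5a}). So there is no in-paper proof to compare against.

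Your argument is correct and takes a different, more elementary route than the one in the cited source. The reference \cite{Padhandetec} develops the epicenter $Z^{*}$ via the Schur multiplier (cf.\ Lemma~\ref{lemm2.77} here, imported from the same paper) and decides capability of $H_m$ through that machinery. You instead (i) exhibit for $m=1$ an explicit $4$-dimensional filiform-type witness $L$ with $L/Z(L)\cong H_1$, and (ii) for $m\ge2$ run a direct Baer-style argument inside an arbitrary central extension, using only the graded Jacobi identity to force any lift $\tilde z$ of $z$ into the centre. The trade-off is clear: your proof is self-contained and avoids all homological input, while the multiplier approach slots naturally into the framework the present paper builds (Corollary~\ref{cor2.10}, Proposition~\ref{pro45}) and actually identifies $Z^{*}(H_m)$ rather than merely establishing non-capability. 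The one super-specific verification you flag, that $[b,b]=0$ is consistent with Jacobi via $[a,[b,b]]=2[b,[a,b]]=2[b,c]=0$, is handled correctly.
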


\begin{theorem}\label{th4.22}\cite[ Theorem 6.3]{Padhandetec}
		$A(m \mid n)$ is capable if and only if $m=0, n=1$ or $m+n \geq 2$.
\end{theorem}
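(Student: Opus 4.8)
The plan is to prove the two implications separately; the degenerate case $A(0\mid 0)=0$ is vacuous, so we assume $(m,n)\neq(0,0)$. For the ``if'' direction I will exhibit, for every admissible pair $(m,n)$, an explicit Lie superalgebra $H$ with $H/Z(H)\cong A(m\mid n)$, the natural candidate being the free nilpotent Lie superalgebra of class two on $m$ even generators $e_1,\dots,e_m$ and $n$ odd generators $f_1,\dots,f_n$ (equivalently, the generalized Heisenberg Lie superalgebra built on that generating superspace). For the ``only if'' direction I will show that $A(1\mid0)$ is not capable, which together with the sufficiency yields the stated dichotomy, since the condition ``$(m,n)=(0,1)$ or $m+n\geq2$'' is precisely ``$n\geq1$ or $m\geq2$''.

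\emph{Sufficiency.} Let $H$ be the class-two free nilpotent Lie superalgebra on $e_1,\dots,e_m$ (even) and $f_1,\dots,f_n$ (odd). By its universal property $[H,H]$ has basis $\{[e_i,e_j]:i<j\}\cup\{[e_i,f_k]\}\cup\{[f_k,f_l]:k\leq l\}$, and $[H,H]\subseteq Z(H)$ since $H$ is nilpotent of class two. The heart of the argument is the reverse inclusion $Z(H)\subseteq[H,H]$ under the hypothesis $n\geq1$ or $m\geq2$: a homogeneous element of $H$ has the form $x=\sum_i\alpha_i e_i+\sum_k\beta_k f_k+c$ with $c\in[H,H]$, and
\[ [x,e_j]=\sum_{i}\alpha_i[e_i,e_j]+\sum_k\beta_k[f_k,e_j],\qquad [x,f_l]=\sum_i\alpha_i[e_i,f_l]+\sum_k\beta_k[f_k,f_l]; \]
by the linear independence of the displayed brackets these all vanish only if every $\alpha_i$ and every $\beta_k$ is zero. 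Here the super-structure enters: if $n\geq1$ then $[f_1,f_1]\neq0$ because $\mathrm{char}\,\mathbb{F}\neq2$, and already $[x,f_1]=0$ forces all $\alpha_i=0$ and $\beta_1=0$, the remaining $\beta_l$ being killed by $[x,f_l]=0$; if instead $n=0$ and $m\geq2$, then bracketing with two distinct even generators kills all $\alpha_i$. Hence $Z(H)=[H,H]$, so $H/Z(H)=H/[H,H]$ is the $(m\mid n)$-dimensional superspace with zero bracket, i.e.\ $A(m\mid n)$, and thus $A(m\mid n)$ is capable. (For the isolated pair $(0,1)$ one may instead note directly that $H(0,1)/Z(H(0,1))\cong A(0\mid1)$ by Theorem \ref{th3.4}.)

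\emph{Necessity.} Suppose $A(1\mid0)\cong H/Z(H)$ for some Lie superalgebra $H$. Since $A(1\mid0)$ is one-dimensional and even, $H=Z(H)\oplus\mathbb{F}h$ as superspaces with $h$ even; then for $a=z_1+\alpha h$ and $b=z_2+\beta h$ with $z_1,z_2\in Z(H)$ we get $[a,b]=\alpha\beta[h,h]=0$, because $|h|=\bar{0}$ forces $[h,h]=0$ in characteristic $\neq2$. Hence $H$ is abelian, $Z(H)=H$, and $H/Z(H)=0\neq A(1\mid0)$, a contradiction. Therefore $A(1\mid0)$ is not capable, and combined with the sufficiency this proves the theorem.

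\emph{Main obstacle.} The only non-routine point is the identification $Z(H)=[H,H]$ for the free nilpotent class-two Lie superalgebra, which rests on having an explicit basis of $[H,H]$ --- equivalently, the linear independence of the generating brackets, i.e.\ the universal property of that superalgebra --- and on the purely super phenomenon that an odd generator $f$ satisfies $[f,f]\neq0$ when $\mathrm{char}\,\mathbb{F}\neq2$. It is exactly this last fact that produces the ``extra'' capable superalgebra $A(0\mid1)$, which has no counterpart among abelian Lie algebras.
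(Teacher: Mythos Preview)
Your argument is correct. The paper does not supply a proof of this theorem at all: it is quoted verbatim as \cite[Theorem 6.3]{Padhandetec} and used as a black box, so there is no in-paper proof to compare against. Your direct construction via the free class-two nilpotent Lie superalgebra on $m$ even and $n$ odd generators, together with the elementary obstruction for $A(1\mid 0)$, is a self-contained and clean route. One cosmetic remark: the element $x=\sum_i\alpha_i e_i+\sum_k\beta_k f_k+c$ you call ``homogeneous'' is not homogeneous unless one of the two sums vanishes; since $Z(H)$ is a graded ideal you may simply treat the even and odd parts of a central element separately, or drop the word ``homogeneous'' altogether, as the bracketing argument works verbatim for arbitrary $x$. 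Either way the conclusion $Z(H)=[H,H]$ stands, and the key super-specific input --- that $[f,f]\neq 0$ for an odd generator $f$ when $\mathrm{char}\,\mathbb{F}\neq 2$ --- is exactly what makes $A(0\mid 1)$ capable while $A(1\mid 0)$ is not.
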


\begin{theorem}\label{th4.33}\cite[ Theorem 6.4]{Padhandetec}  
		$H(m, n)$ is capable if and only if $m=1, n=0$.
\end{theorem}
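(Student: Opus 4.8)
The plan is to argue directly from the definition of capability. Since $Z(H(m,n))=\langle z\rangle$ is one‑dimensional, $H(m,n)$ is capable precisely when there is a Lie superalgebra $G$ with $H(m,n)\cong G/Z(G)$, so I would produce such a $G$ when $(m,n)=(1,0)$ and rule one out otherwise. For $(m,n)=(1,0)$ the algebra $H(1,0)$ is the three‑dimensional Heisenberg Lie algebra $\langle x_1,x_2,z\mid[x_1,x_2]=z\rangle$; taking $G=\langle a,b,c,d\rangle$ with $[a,b]=c$, $[a,c]=d$ and all other brackets zero, one checks the graded Jacobi identity and that $Z(G)=\langle d\rangle$ with $G/Z(G)\cong H(1,0)$, so $H(1,0)$ is capable.

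For the converse, suppose $\varphi\colon G/Z(G)\xrightarrow{\ \sim\ }H(m,n)$ is an even isomorphism; choose homogeneous preimages $\tilde x_i\in G_{\bar0}$ of $x_i$ and $\tilde y_j\in G_{\bar1}$ of $y_j$, which generate $G$ modulo $Z(G)$. The heart of the argument is to exhibit an element $w\in G'$ with $\varphi(w+Z(G))=z$ and to show, using only the graded Jacobi identity and the defining relations of $H(m,n)$, that $[w,\tilde x_i]=0$ and $[w,\tilde y_j]=0$ for every generator; since $\operatorname{ad}(w)$ is a graded derivation it then annihilates all of $\langle\tilde x_i,\tilde y_j\rangle$, hence all of $G=\langle\tilde x_i,\tilde y_j\rangle+Z(G)$, so $w\in Z(G)$ — contradicting $\varphi(w+Z(G))=z\ne0$. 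Concretely, when $m\ge 2$ take $w=[\tilde x_1,\tilde x_{m+1}]$: as the only nonzero even brackets in $H(m,n)$ are the hyperbolic ones $[x_i,x_{m+i}]=z$, the relations $[\bar x_k,\bar x_1]=[\bar x_k,\bar x_{m+1}]=0$ for $k\notin\{1,m+1\}$ together with Jacobi force $[w,\tilde x_k]=0$ there; repeating the computation with $w'=[\tilde x_2,\tilde x_{m+2}]$ (available since $m\ge 2$) and using $w-w'\in Z(G)$ then yields $[w,\tilde x_1]=[w,\tilde x_{m+1}]=0$ as well, while $[w,\tilde y_j]=0$ follows from a sign‑free variant since $|\tilde x_i|=\bar0$. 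When $n\ge 1$ take instead $w=[\tilde y_1,\tilde y_1]$: here $[w,\tilde y_1]=[[\tilde y_1,\tilde y_1],\tilde y_1]=0$ by the graded Jacobi identity (valid since $\operatorname{char}\mathbb F\ne3$), and $[w,\tilde x_k]=[w,\tilde y_j]=0$ for $j\ne1$ because $[\bar y_1,\bar x_k]=[\bar y_1,\bar y_j]=0$ in $H(m,n)$. In both cases the contradiction follows, so $H(m,n)$ fails to be capable whenever $m\ge 2$ or $n\ge 1$, which together with the first part establishes the theorem.

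The main obstacle is precisely the need for a \emph{second} hyperbolic pair when $m\ge 2$: the computation showing $w=[\tilde x_1,\tilde x_{m+1}]$ is central breaks down exactly when $m=1$ and $n=0$, which is the one capable algebra, so the dichotomy is tight and leaves no slack. Beyond that the work is bookkeeping — confirming that the lifts $\tilde x_i,\tilde y_j$ generate $G$ modulo $Z(G)$ (so that annihilating them is enough) and tracking the signs in the graded Jacobi identity, all of which collapse because the relevant mixed and off‑diagonal commutators $[\bar x_k,\bar y_1]$ and $[\bar y_i,\bar y_j]$ with $i\ne j$ vanish in $H(m,n)$. One could alternatively route the whole argument through the epicenter, proving $Z^{*}(H(m,n))=\langle z\rangle$ for $(m,n)\ne(1,0)$ and $Z^{*}(H(1,0))=0$, or through the correspondence between $\mathcal{SN}_2$ and $\mathcal{SSKE}$; the direct computation above seems the most economical.
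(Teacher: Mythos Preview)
The paper does not prove this theorem; it is quoted without proof from \cite{Padhandetec}, where the result is obtained via the epicenter and Schur multiplier machinery (essentially by showing that the natural map $\mathcal{M}(H(m,n))\to\mathcal{M}(H(m,n)/\langle z\rangle)$ fails to be injective precisely when $(m,n)=(1,0)$, so that $Z^*(H(m,n))=\langle z\rangle$ otherwise). Your argument is correct and takes a genuinely different, more elementary route: you work directly from the definition of capability, lift the generators of $H(m,n)$ to a hypothetical $G$ with $G/Z(G)\cong H(m,n)$, and show by bare graded Jacobi computations that any lift of $z$ is already central in $G$. The two devices you isolate --- the second hyperbolic pair $w'=[\tilde x_2,\tilde x_{m+2}]$ when $m\geq 2$, and the identity $3[\tilde y_1,[\tilde y_1,\tilde y_1]]=0$ (using $\operatorname{char}\mathbb F\neq 3$) when $n\geq 1$ --- are exactly what makes the elementary argument close, and your remark that the first device is unavailable precisely at $(m,n)=(1,0)$ explains neatly why that case is exceptional. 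The multiplier approach of \cite{Padhandetec} has the advantage of fitting into the general framework used throughout the present paper (and yields $\dim\mathcal{M}(H(m,n))$ along the way), whereas your direct computation is self-contained and needs no homological input.
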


\begin{theorem}\label{th4.4}\cite[Proposition 3.4]{SN2018b}\cite[Theorem 6.9]{Padhandetec} \label{th5a}
Let $L$ be a nilpotent Lie superalgebra of dimension $(k \mid l)$ with $\dim L'=(r \mid s)$, where $r+s=1$. If $r=1, s=0$ then $L \cong H(m,n)\oplus A(k-2m-1 \mid l-n)$ for $m+n\geq 1$. If $r=0, s=1$ then $L \cong H_{m} \oplus A(k-m \mid l-m-1)$. Moreover, $L$ is capable if and only if either $L \cong H(1 , 0)\oplus A(k-3 \mid l)$ or $L \cong H_{1}\oplus A(k-1 \mid l-2)$.
\end{theorem}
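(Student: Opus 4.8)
The plan is to establish the two structural isomorphisms first and then read off the capability criterion from them, using the already–known capability of the Heisenberg and abelian Lie superalgebras together with a direct–sum principle for the epicenter.

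\emph{Step 1 (reduction and structure).} Since $L$ is nilpotent and $L'=\langle z\rangle$ is one–dimensional with $z$ homogeneous, the ideal $[L,L']\subseteq L'$ cannot equal $L'$ (the lower central series would otherwise stabilise at $L'\neq 0$), so $[L,L']=0$ and $L'\subseteq Z(L)$; thus $L$ has class at most two and its bracket descends to a nondegenerate skew–supersymmetric bilinear map $\bar\theta\colon L/Z(L)\times L/Z(L)\longrightarrow \mathbb{F}z$ (nondegenerate because its radical lifts into $Z(L)$). If $z$ is even ($r=1$) the target forces $\bar\theta$ to vanish on the mixed part $(L/Z(L))_{\bar 0}\times(L/Z(L))_{\bar 1}$, to be alternating on the even part, and, by graded skew–symmetry, symmetric on the odd part; putting these into symplectic, respectively diagonal, normal form, lifting the chosen bases to $L$, and completing $z$ to a basis of $Z(L)$ produces exactly the relations of $H(m,n)\oplus A(k-2m-1\mid l-n)$, with $m+n\geq 1$ because $\bar\theta\neq 0$. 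If $z$ is odd ($s=1$) then $\bar\theta$ shifts parity, hence vanishes on $(L/Z(L))_{\bar 0}$ and on $(L/Z(L))_{\bar 1}$ separately and is a perfect pairing between them, forcing $\dim(L/Z(L))_{\bar 0}=\dim(L/Z(L))_{\bar 1}=:m$; dual bases then yield the relations of $H_m\oplus A(k-m\mid l-m-1)$. This is the argument of \cite[Proposition 3.4]{SN2018b}, where one also appeals to Theorems \ref{th3.4} and \ref{th3.6} to identify the Heisenberg pieces over $\mathbb{F}$.

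\emph{Step 2 (capability).} By Step 1 it suffices to decide when $H(m,n)\oplus A(p\mid q)$ and $H_m\oplus A(p\mid q)$ are capable, and the claim is that this is governed solely by the Heisenberg summand $K$. For the ``only if'' direction: if $K$ is not capable then $0\neq Z^{*}(K)\subseteq Z(K)=\langle z\rangle$ gives $Z^{*}(K)=\langle z\rangle$ with $z\in K'$; using the behaviour of the epicenter under direct sums with an abelian factor — concretely, $Z^{*}(K\oplus A)\cap K'=Z^{*}(K)\cap K'$, which follows from the (super) decomposition of the non-abelian exterior square of $K\oplus A$, whose cross term is $(K/K')\otimes A$ — one obtains $\langle z\rangle\subseteq Z^{*}(L)$, so $L$ is not capable. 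By Theorems \ref{th4.33} and \ref{thm555} this eliminates every case except $H(m,n)\cong H(1,0)$ and $H_m\cong H_1$. For the ``if'' direction one exhibits $G$ with $G/Z(G)\cong L$: if $A(p\mid q)=0$ this is Theorem \ref{th4.33}, resp. \ref{thm555}; if $A(p\mid q)\neq 0$ and $A(p\mid q)\not\cong A(1\mid 0)$ then $A(p\mid q)$ is capable (Theorem \ref{th4.22}), say $A(p\mid q)\cong B/Z(B)$ and $H(1,0)$ (resp. $H_1$) $\cong C/Z(C)$, and $G=C\oplus B$ works because $Z(C\oplus B)=Z(C)\oplus Z(B)$; the only remaining case is $A(p\mid q)\cong A(1\mid 0)$, where one checks directly that the class–three Lie superalgebra $\langle a,b,c,z,w_1,w_2\rangle$ with $[a,b]=z$, $[a,z]=w_1$, $[b,z]=w_2$, $[c,a]=w_1$ and all other brackets zero has centre $\langle w_1,w_2\rangle$ and quotient $H(1,0)\oplus A(1\mid 0)$ (an entirely parallel construction, with the appropriate super signs, handles $H_1\oplus A(1\mid 0)$).

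The main obstacle is the ``if'' part of the capability statement in the presence of the non-capable summand $A(1\mid 0)$: the naive ``direct sum of covers'' breaks down, and one must instead produce a class–three cover by hand and verify — while tracking super signs through the graded Jacobi identity and the centre computation — that it has the prescribed central quotient. A smaller technical point, needed in the ``only if'' direction, is the direct–sum behaviour of the epicenter, which I would isolate as a lemma from the exterior–square machinery for Lie superalgebras; one should also confirm, via Theorems \ref{th3.4} and \ref{th3.6}, that the symplectic/diagonal normal forms invoked in Step 1 are indeed available over $\mathbb{F}$.
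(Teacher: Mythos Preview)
The paper does not prove this statement at all; it is quoted from \cite[Proposition~3.4]{SN2018b} and \cite[Theorem~6.9]{Padhandetec} as a preliminary result, with no accompanying argument, so there is no in-paper proof to compare against.

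Your outline is nonetheless sound. Step~1 is the standard reduction (nilpotency together with $\dim L'=1$ forces $[L,L']=0$, hence class at most two, and the induced nondegenerate skew-supersymmetric form on $L/Z(L)$ with values in $\langle z\rangle$ is brought to the normal form underlying Theorem~\ref{th3.4} or Theorem~\ref{th3.6}, according to the parity of $z$). In Step~2 your exterior-square argument for the ``only if'' direction and your explicit class-three cover for $H(1,0)\oplus A(1\mid 0)$ both check out, but you are working harder than the paper's own toolkit requires: Theorem~\ref{the2.14}, recorded in the same preliminary section, gives $Z^{*}(H\oplus A)=Z^{*}(H)$ whenever $H$ is generalized Heisenberg and $A$ abelian, so combining it with Theorems~\ref{th4.33} and~\ref{thm555} disposes of both directions of the capability claim in one line, with no case split on $A(1\mid 0)$ and no hand-built cover. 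Your closing caveat about the availability over $\mathbb{F}$ of the diagonal normal form $[y_j,y_j]=z$ is well placed; the paper simply imports this as Theorem~\ref{th3.4} rather than arguing it.
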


\begin{lemma}\label{p11} \cite[Proposition 4.3]{2Nilpotent2020}
Let $M$ be a graded ideal of a Lie superalgebra $L=L_{\bar{0}}\oplus L_{\bar{1}}$. Then the following sequences are exact:
\begin{enumerate}
\item ${\rm{ker}}(\theta_c)\rightarrow \mathcal{M} ^{(c)}(L)\rightarrow \mathcal{M}^{(c)}(\frac{L}{M})\rightarrow \frac{M\cap \gamma_{c+1}(L)}{\gamma_{c+1}(M,L)}\rightarrow 0$.
\item If $M$ is $n$-central ideal with $n\leq c$, $M\wedge^c \frac{L}{\gamma_{n+1}(L)}\rightarrow \mathcal{M}^{(c)}(L)\xrightarrow{\varphi} \mathcal{M}^{(c)}(\frac{L}{M})\rightarrow M\cap \gamma_{c+1}(L)\rightarrow 0$.
\end{enumerate}
\end{lemma}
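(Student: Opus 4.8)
The plan is to run the standard Ganea/five-term argument inside a free presentation. Fix a free presentation $0\to R\to F\to L\to 0$ of $L$ and let $S$ be the preimage of $M$ in $F$, so that $R\subseteq S\subseteq F$, $S/R\cong M$, and $0\to S\to F\to L/M\to 0$ is a free presentation of $L/M$. Recalling that $\gamma_{i+1}(A,B)$ denotes the iterated bracket $[\gamma_{i}(A,B),B]$ with $\gamma_{1}(A,B)=A$, and that with respect to these presentations $\mathcal{M}^{(c)}(L)=\frac{R\cap\gamma_{c+1}(F)}{\gamma_{c+1}(R,F)}$ and $\mathcal{M}^{(c)}(L/M)=\frac{S\cap\gamma_{c+1}(F)}{\gamma_{c+1}(S,F)}$, one computes by projecting along $F\to L$ that $\gamma_{c+1}(L)=\frac{\gamma_{c+1}(F)+R}{R}$, $M\cap\gamma_{c+1}(L)=\frac{S\cap(\gamma_{c+1}(F)+R)}{R}$ and $\gamma_{c+1}(M,L)=\frac{\gamma_{c+1}(S,F)+R}{R}$. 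The graded skew-symmetry and graded Jacobi identity are used only to confirm that these iterated-bracket subspaces are graded ideals and that the induced maps have even degree; the signs they produce are harmless bookkeeping.

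For part (1), the map $\mathcal{M}^{(c)}(L)\to\mathcal{M}^{(c)}(L/M)$ is induced by the inclusion $R\cap\gamma_{c+1}(F)\hookrightarrow S\cap\gamma_{c+1}(F)$, while $\mathcal{M}^{(c)}(L/M)\to\frac{M\cap\gamma_{c+1}(L)}{\gamma_{c+1}(M,L)}$ is induced by $w\mapsto w+R$. Surjectivity of the latter is a one-line lift: given $m\in M\cap\gamma_{c+1}(L)$, choose $s\in S$ and $w\in\gamma_{c+1}(F)$ both mapping to $m$; then $w-s\in R\subseteq S$, so $w\in S\cap\gamma_{c+1}(F)$ maps to $m$. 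Exactness at $\mathcal{M}^{(c)}(L/M)$ reduces, after cancelling $\gamma_{c+1}(S,F)$, to the modular-law identity $\bigl(\gamma_{c+1}(S,F)+R\bigr)\cap\gamma_{c+1}(F)=\gamma_{c+1}(S,F)+\bigl(R\cap\gamma_{c+1}(F)\bigr)$, valid because $\gamma_{c+1}(S,F)\subseteq\gamma_{c+1}(F)$. Finally, the kernel of $\mathcal{M}^{(c)}(L)\to\mathcal{M}^{(c)}(L/M)$ works out to $\frac{R\cap\gamma_{c+1}(S,F)}{\gamma_{c+1}(R,F)}$, and exactness at $\mathcal{M}^{(c)}(L)$ is the statement that this coincides with the image of $\ker(\theta_c)$, where $\theta_c$ is the natural homomorphism attached to the pair $(M,L)$ as in \cite{2Nilpotent2020} (on representatives, $w+\gamma_{c+1}(R,F)\mapsto w+R$ on the relevant subquotient of $F$); this coincidence is exactly what the construction of $\theta_c$ is designed to yield.

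For part (2), the hypothesis that $M$ is $n$-central with $n\le c$ gives $[M,{}_{n}L]=0$, hence $\gamma_{c+1}(M,L)=0$; consequently $\gamma_{c+1}(S,F)$ maps to $0$ in $L$, that is $\gamma_{c+1}(S,F)\subseteq R$. Thus the last term collapses to $M\cap\gamma_{c+1}(L)$ and the kernel of $\varphi$ becomes $\frac{\gamma_{c+1}(S,F)}{\gamma_{c+1}(R,F)}$. This subquotient is spanned by the classes of brackets $[\tilde m,f_{1},\dots,f_{c}]$ with $\tilde m$ a lift of an element of $M$, and $n$-centrality of $M$ is what permits the entries $f_{i}$ to be read only modulo $\gamma_{n+1}(L)$; the exterior-type product $M\wedge^{c}\frac{L}{\gamma_{n+1}(L)}$ of \cite{2Nilpotent2020}, in its graded version, carries a natural homomorphism onto $\frac{\gamma_{c+1}(S,F)}{\gamma_{c+1}(R,F)}=\ker\varphi$, and this supplies the leftmost arrow. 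Exactness at $\mathcal{M}^{(c)}(L/M)$ and the surjection onto $M\cap\gamma_{c+1}(L)$ are the specializations of the corresponding steps in part (1).

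I expect the real obstacle to lie in the construction and verification of $\theta_c$ and of the graded exterior product $M\wedge^{c}\frac{L}{\gamma_{n+1}(L)}$: one must check that the symbols $[\tilde m,f_{1},\dots,f_{c}]$ satisfy the defining relations of that product, that the resulting map into $\mathcal{M}^{(c)}(L)$ is well defined with consistent $\mathbb{Z}_{2}$-signs, and that its image is exactly the kernel described above --- in other words, precisely the identification underlying exactness at $\mathcal{M}^{(c)}(L)$. Everything else is the formal five-term/Ganea-type diagram chase already carried out for Lie algebras in \cite{2Nilpotent2020}, which transports to the super setting essentially verbatim once the sign conventions are fixed.
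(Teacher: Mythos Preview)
The paper does not supply its own proof of this lemma: it is quoted verbatim as \cite[Proposition~4.3]{2Nilpotent2020} and used as a black box. There is therefore nothing in the present paper to compare your argument against.

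That said, your outline is the standard free-presentation/Ganea argument used to establish such exact sequences for Lie algebras and their graded analogues, and it is essentially the approach taken in the cited source. The identifications $\mathcal{M}^{(c)}(L)=\frac{R\cap\gamma_{c+1}(F)}{\gamma_{c+1}(R,F)}$ and $\mathcal{M}^{(c)}(L/M)=\frac{S\cap\gamma_{c+1}(F)}{\gamma_{c+1}(S,F)}$, the modular-law step for exactness at $\mathcal{M}^{(c)}(L/M)$, and the collapse $\gamma_{c+1}(S,F)\subseteq R$ under the $n$-centrality hypothesis are all correct. Your own caveat is well placed: the only nontrivial content beyond the diagram chase is the construction of $\theta_c$ and of the graded product $M\wedge^{c}\frac{L}{\gamma_{n+1}(L)}$ together with the verification that its image is exactly $\frac{\gamma_{c+1}(S,F)}{\gamma_{c+1}(R,F)}$; these are precisely the ingredients developed in \cite{2Nilpotent2020}, so invoking them is appropriate here.
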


Let $L$ be a Lie superalgebra with a free presentation $0\longrightarrow R \longrightarrow F \overset{\pi}{\longrightarrow} L \longrightarrow 0$, where $F$ is a free Lie superalgebra and $R$ is an ideal of $F$. Then the Schur multiplier of $L$ can be defined as  $\mathcal{M}(L)=F^2 \cap R/[F,R]$. Let $H$ be a central ideal of a Lie superalgebra $L$. Then there exists an ideal $S$ in $F$ such that $H\cong S/R$ and $L/H\cong F/S$, so $\mathcal{M}(L/H)=F^2 \cap S/[F,S]$.  Suppose  $\pi(\tilde{x})=x$, where  $x \in L$ and $\tilde{x} \in F$, then the following are homomorphisms:
	
	\[\xi_1:\mathcal{M}(L)\longrightarrow \mathcal{M}(L/H)\] 
	\[x+[R,F]\longmapsto x+[S,F]\]
	
	\begin{equation}\label{eq2.1}
\varphi: (L/L^2) \otimes H \longrightarrow  \mathcal{M}(L) 
	\end{equation}
	\[(x+L^2)\otimes z\longmapsto [\tilde{z},\tilde{x}]+[R,F]\] 
	
	\[\rho:\mathcal{M}(L/H)\longrightarrow L^2\cap H\cong (S\cap F^2)/(R\cap F^2)\]
	\[x+[S,F]\longmapsto \pi(x)\]\\

\begin{lemma}\cite[ Proposition 5.7, Theorem 5.8 ]{Padhandetec}\label{lemm2.77}
Let $H$ be a central ideal of a finite-dimensional Lie superalgebra $L$. Then the following results hold.
\begin{enumerate}
\item The sequence $0\longrightarrow \mbox{ker}\rho\longrightarrow H\otimes \frac{L}{L^2}\overset{\varphi}\longrightarrow \mathcal{M}(L) \overset{\xi} \longrightarrow \mathcal{M}(L/H)\longrightarrow H\cap L^{2} \longrightarrow 0$ is exact.
\item  $H \subseteq Z^{*}(H)$ if and only if $\mbox{im} \varphi=0$.
\end{enumerate}
\end{lemma}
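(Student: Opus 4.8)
The plan is to argue entirely inside a fixed free presentation $0\to R\to F\xrightarrow{\pi}L\to 0$ together with the ideal $S$ of $F$ attached to $H$, so that $S/R\cong H$, $F/S\cong L/H$, $\mathcal{M}(L)=(F^2\cap R)/[F,R]$ and $\mathcal{M}(L/H)=(F^2\cap S)/[F,S]$. Because $H$ is central, everything rests on the inclusions $[F,S]\subseteq R$ (centrality), hence $[F,S]\subseteq F^2\cap R$, together with $[F,R]\subseteq[F,S]$ and $[S,R]\subseteq[F,R]$. First I would check that $\xi,\varphi,\rho$ are well defined: for $\xi$ this is immediate from $F^2\cap R\subseteq F^2\cap S$ and $[F,R]\subseteq[F,S]$; for $\rho$ from $\pi([F,S])\subseteq\pi(R)=0$ and $\pi(F^2\cap S)\subseteq L^2\cap H$; and for $\varphi$ one first notes that $[\widetilde z,\widetilde x]\in[F,S]\subseteq F^2\cap R$ for $\widetilde z\in S$, so its class in $\mathcal{M}(L)$ makes sense, and then a graded-Jacobi computation shows that changing $\widetilde x$ by an element of $R$ or of $F^2$, or changing $\widetilde z$ by an element of $R$, only moves $[\widetilde z,\widetilde x]$ inside $[F,R]$ (in the $F^2$ case one pushes $\widetilde z$ past the bracket and uses $[\widetilde z,\widetilde a],[\widetilde z,\widetilde b]\in[F,S]\subseteq R$); bilinearity in $\widetilde z$ and $\widetilde x$ then lets $\varphi$ descend to a degree-zero map on $H\otimes(L/L^2)$. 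The only real nuisance here is tracking the Koszul signs coming from the $\mathbb{Z}_2$-grading, but none of them obstructs any of the required inclusions.

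For part $(1)$ I would then compute each image and kernel explicitly as a subquotient of $F$. A short computation gives $\mathrm{im}\,\varphi=([F,S]+[F,R])/[F,R]=[F,S]/[F,R]$, while $\ker\xi=\big((F^2\cap R)\cap[F,S]\big)/[F,R]=[F,S]/[F,R]$ since $[F,S]\subseteq F^2\cap R$; this is exactness at $\mathcal{M}(L)$. Likewise $\ker\rho=\{\,w+[F,S]:w\in F^2\cap S,\ \pi(w)=0\,\}=(F^2\cap R)/[F,S]$ by $R\subseteq S$, which equals $\mathrm{im}\,\xi$, the image of $F^2\cap R$ in $(F^2\cap S)/[F,S]$; this is exactness at $\mathcal{M}(L/H)$. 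Surjectivity of $\rho$ onto $H\cap L^2$ follows by lifting: given $\ell\in H\cap L^2$, pick $w\in F^2$ and $s\in S$ with $\pi(w)=\pi(s)=\ell$, so $w-s\in R\subseteq S$, whence $w\in F^2\cap S$ and $\rho(w+[F,S])=\ell$. (Exactness at $\mathcal{M}(L)$ and $\mathcal{M}(L/H)$ and the surjectivity of $\rho$ are essentially the $c=1$ instance of Lemma~\ref{p11}.) The remaining, and I expect hardest, point is the left-hand end: one must produce the natural map $\ker\rho\to H\otimes(L/L^2)$ associated with the central extension $0\to H\to L\to L/H\to 0$ and match it against $\ker\varphi$. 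This is the super-analogue of the Ganea/Stallings exact sequence, and the delicacy is that $\varphi$ is canonically only a surjection onto $[F,S]/[F,R]$, so pinning down $\ker\varphi$ forces one to work with the degree-two layer of the free presentation.

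Part $(2)$ asserts that $H\subseteq Z^{*}(L)$, the epicenter of $L$, if and only if $\mathrm{im}\,\varphi=0$. To prove it I would use the free-presentation description $Z^{*}(L)=\pi\big(\{\,f\in F:[f,F]\subseteq[F,R]\,\}\big)$, the super-analogue of the classical characterization (available from the references on capability quoted in the introduction). By part $(1)$, or directly, $\mathrm{im}\,\varphi=[F,S]/[F,R]$ inside $\mathcal{M}(L)=(F^2\cap R)/[F,R]$, so $\mathrm{im}\,\varphi=0$ exactly when $[F,S]\subseteq[F,R]$. Writing $S=\widetilde H+R$ for any choice of lifts $\widetilde H$ of $H$, the condition $[F,S]\subseteq[F,R]$ is equivalent to $[\widetilde h,F]\subseteq[F,R]$ for every lift $\widetilde h$ of every $h\in H$; and since any two lifts of a given $h$ differ by an element of $R$, for which $[R,F]\subseteq[F,R]$, this holds for all lifts as soon as it holds for one, i.e.\ exactly when $H\subseteq\pi\big(\{\,f\in F:[f,F]\subseteq[F,R]\,\}\big)=Z^{*}(L)$. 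Hence $H\subseteq Z^{*}(L)$ if and only if $\mathrm{im}\,\varphi=0$; combined with $L$ being capable precisely when $Z^{*}(L)=0$, this is the mechanism we will use to locate epicenters of nilpotent Lie superalgebras of class two.
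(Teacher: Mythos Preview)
The paper does not supply a proof of this lemma at all: it is quoted verbatim from \cite[Proposition 5.7, Theorem 5.8]{Padhandetec} and then used as input for Lemma~\ref{lemm2.9}. So there is no ``paper's proof'' to compare against; your write-up has to be judged on its own merits.

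Your argument for exactness at $\mathcal{M}(L)$, at $\mathcal{M}(L/H)$, and at $H\cap L^2$ is the standard free-presentation computation and is correct: the identifications $\mathrm{im}\,\varphi=[F,S]/[F,R]=\ker\xi$ and $\mathrm{im}\,\xi=(F^2\cap R)/[F,S]=\ker\rho$ are exactly right, and your well-definedness checks for $\varphi$ (pushing $\widetilde{z}$ past a bracket using $[\widetilde{z},F]\subseteq[F,S]\subseteq R$) are the correct mechanism. Your proof of part~(2) via the description $Z^{*}(L)=\pi\bigl(\{f\in F:[f,F]\subseteq[F,R]\}\bigr)$ and the equivalence $[F,S]\subseteq[F,R]\Leftrightarrow H\subseteq Z^{*}(L)$ is complete and is indeed the argument given in the cited source. (Note the statement's ``$Z^{*}(H)$'' is a typographical slip for $Z^{*}(L)$, which you silently and correctly repaired.)

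The one genuine gap, which you flag yourself, is the left-hand end of the sequence: you have not actually constructed the injection $\ker\rho\hookrightarrow H\otimes(L/L^2)$ nor shown that its image is $\ker\varphi$. Saying it ``forces one to work with the degree-two layer of the free presentation'' is a diagnosis, not a proof. Concretely, $\ker\rho=(F^2\cap R)/[F,S]$ sits inside $\mathcal{M}(L/H)$, while $\ker\varphi$ sits inside $H\otimes(L/L^2)$; producing the required isomorphism between them needs an explicit splitting or a Hopf-formula/Ganea-type identification (this is the content of Proposition~5.7 in the cited paper), and you have not supplied it. For the purposes of the present paper this omission is harmless, since only part~(2) and the four-term tail of the sequence are ever invoked (in Lemma~\ref{lemm2.9} and in the derivation of Theorem~\ref{theo28}); but as a proof of the lemma as stated it is incomplete.
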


Let $L$ be a Lie superalgebra with a free presentation $0\longrightarrow R \longrightarrow F \overset{\pi}{\longrightarrow} L \longrightarrow 0$. If $x \in L$, then $\tilde{x}$ denotes a fixed pre-image of $x$ under $\pi$. It is easy to see that $L^2\cong (F^2+R)/R$ ,  $L/L^2\cong F/(F^2+R)$, so $\mathcal{M}(L/L^2)=F^2/(F^3+[R,F])$ and we have the following  homomorphisms:
	
\begin{equation}\label{eq1.3}
\beta: (L/L^2)\otimes L^2\longrightarrow \mathcal{M}(L) 	
\end{equation}
\[(x+L^2)\otimes z\longmapsto [\tilde{x},\tilde{z}]+[R,F]\]\\
		
\[\xi:\mathcal{M}(L)\longrightarrow \mathcal{M}(L/L^2)\] 
\[x+[R,F]\longmapsto x+(F^2+[R,F])\] \\
		
\[\zeta:\mathcal{M}(L/L^2) \longrightarrow L^2\] 
\[x+[R,F]\longmapsto \pi(x)\]

\begin{theorem}\label{theo28}
Let $L$ be a finite-dimensional nilpotent Lie superalgebra of class two, then the sequence
\[0\longrightarrow \mbox{ker}\beta\longrightarrow L^2 \otimes (L/L^2)\overset{\beta}\longrightarrow \mathcal{M}(L) \overset{\xi} \longrightarrow \mathcal{M}(L/L^2) \overset{\zeta} \longrightarrow  L^{2} \longrightarrow 0\] 
is exact. Moreover, 
$\mbox{ker}\beta=\langle(-1)^{|z||y|}((z+L^2)\otimes[x,y])+(-1)^{|x||z|}((x+L^2)\otimes[y,z])+(-1)^{|y||x|}((y+L^2)\otimes [z,x]) ~\lvert ~ x,y,z ~ \in L\rangle$.
\end{theorem}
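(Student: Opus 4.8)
The plan is to read this off from Lemma~\ref{lemm2.77}. Since $L$ is nilpotent of class two, $[L^{2},L]=\gamma_{3}(L)=0$, so $L^{2}$ is a central ideal; applying Lemma~\ref{lemm2.77} (valid as $L$ is finite-dimensional) with $H=L^{2}$ gives, from part~(1), the exact sequence
\[0\longrightarrow\ker\varphi\longrightarrow L^{2}\otimes(L/L^{2})\overset{\varphi}\longrightarrow\mathcal{M}(L)\overset{\xi}\longrightarrow\mathcal{M}(L/L^{2})\overset{\rho}\longrightarrow L^{2}\longrightarrow 0 ,\]
because $H\cap L^{2}=L^{2}$ and $L/H=L/L^{2}$. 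Comparing \eqref{eq2.1} with \eqref{eq1.3}, on homogeneous elements $\beta$ differs from $\varphi$ only by the sign $-(-1)^{|x||z|}$ forced by graded skew-symmetry $[\tilde x,\tilde z]=-(-1)^{|x||z|}[\tilde z,\tilde x]$, while $\zeta$ is the map $\rho$; hence $\operatorname{im}\beta=\operatorname{im}\varphi$, the graded flip $L^{2}\otimes(L/L^{2})\to(L/L^{2})\otimes L^{2}$ carries $\ker\beta$ onto $\ker\varphi$, and the sequence of the theorem is exact.

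\textbf{The inclusion $J\subseteq\ker\beta$.} Fix a free presentation $0\to R\to F\xrightarrow{\pi}L\to 0$. For homogeneous $x,y,z\in L$ the elements $\widetilde{[x,y]}$ and $[\tilde x,\tilde y]$ both map to $[x,y]$ under $\pi$, so they differ by an element of $R$ and hence represent the same class modulo $[R,F]$; similarly for the cyclic permutations. Therefore $\beta$ sends the stated generator to
\[(-1)^{|z||y|}[\tilde z,[\tilde x,\tilde y]]+(-1)^{|x||z|}[\tilde x,[\tilde y,\tilde z]]+(-1)^{|y||x|}[\tilde y,[\tilde z,\tilde x]]+[R,F],\]
which is $0$ because the graded Jacobi identity holds in the free Lie superalgebra $F$. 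So the span $J$ of these elements lies in $\ker\beta$.

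\textbf{The inclusion $\ker\beta\subseteq J$.} Now take the presentation attached to a minimal homogeneous generating set, so that $R\subseteq F^{2}$; since $L$ has class two, $F^{3}\subseteq R$, whence $[R,F]\subseteq[F^{2},F]=F^{3}$ and $F^{4}=[F^{3},F]\subseteq[R,F]$. Thus $\mathcal{M}(L)=R/[R,F]$, $\mathcal{M}(L/L^{2})=F^{2}/F^{3}$, and the exactness already shown gives $\operatorname{im}\beta=\ker\xi=F^{3}/[R,F]$. Put $V=F/F^{2}$ and, for $n\ge 1$, let $\mathcal{L}_{n}=F^{n}/F^{n+1}$ be the $n$-th homogeneous component of the free Lie superalgebra on $V$; let $\bar R\subseteq\mathcal{L}_{2}$ be the image of $R$ and $B\subseteq\mathcal{L}_{3}$ the image of $V\otimes\bar R$ under the bracket $\mathcal{L}_{1}\otimes\mathcal{L}_{2}\to\mathcal{L}_{3}$. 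Since $L^{2}\cong\mathcal{L}_{2}/\bar R$, $L/L^{2}\cong V$ and $[R,F]/F^{4}=B$, the congruences above identify $\beta$ with the map $V\otimes(\mathcal{L}_{2}/\bar R)\to\mathcal{L}_{3}/B$ induced by the bracket $V\otimes\mathcal{L}_{2}\to\mathcal{L}_{3}$. A short diagram chase then gives $\ker\beta=(K+V\otimes\bar R)/(V\otimes\bar R)$, where $K=\ker(V\otimes\mathcal{L}_{2}\to\mathcal{L}_{3})$, whereas $J$ is the image in $V\otimes(\mathcal{L}_{2}/\bar R)$ of the span of the super Jacobiators $\mathcal{J}(\bar x,\bar y,\bar z)=(-1)^{|z||y|}\bar z\otimes[\bar x,\bar y]+(-1)^{|x||z|}\bar x\otimes[\bar y,\bar z]+(-1)^{|y||x|}\bar y\otimes[\bar z,\bar x]$, which lies in $K$ by the graded Jacobi identity. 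So $\ker\beta=J$ reduces to showing that $K$ is spanned by the $\mathcal{J}(\bar x,\bar y,\bar z)$.

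\textbf{Main obstacle.} The heart of the matter is this last claim, i.e.\ the exactness of $0\to K\to V\otimes\mathcal{L}_{2}\to\mathcal{L}_{3}\to 0$ with $K=\langle\mathcal{J}(\bar x,\bar y,\bar z)\rangle$; this is the super analogue of the classical identity $\mathcal{L}_{3}\cong(V\otimes\Lambda^{2}V)/\langle\text{Jacobiators}\rangle$ for free Lie algebras, where the kernel is $\Lambda^{3}V$. I would establish it from the standard description of the free Lie superalgebra --- via the Poincar\'e--Birkhoff--Witt theorem, realizing $\mathcal{L}_{2}$ and $\mathcal{L}_{3}$ inside the free associative superalgebra $T(V)$ and checking that, modulo graded skew-symmetry, the only degree-three relations among iterated brackets are the graded Jacobi ones (so that the Jacobiator map from the super exterior cube of $V$ into $V\otimes\mathcal{L}_{2}$ is injective with the expected cokernel $\mathcal{L}_{3}$); a dimension count with the Witt formula for free Lie superalgebras, applied separately to even and odd parts, gives a cross-check, the finite-dimensionality of $L$ making $V$ and hence $\mathcal{L}_{2},\mathcal{L}_{3}$ finite-dimensional. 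The other point needing care is the bookkeeping of parities and graded signs, both in identifying $\beta$ with the bracket map and in the diagram chase: these manipulations are routine, but the super signs must be tracked carefully throughout.
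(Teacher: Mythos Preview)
For exactness and for the inclusion $J\subseteq\ker\beta$ your argument coincides with the paper's: the paper invokes Lemma~\ref{p11} at $c=1$ rather than Lemma~\ref{lemm2.77} with $H=L^{2}$, but these specialize to the same five-term sequence, and the verification that each Jacobi generator is killed by $\beta$ is the same one-line application of the graded Jacobi identity in $F$.

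The divergence is on the reverse inclusion $\ker\beta\subseteq J$. The paper does not prove it: after checking that the Jacobi elements lie in $\ker\beta$ it writes ``Hence the proof follows'' and stops, so only $J\subseteq\ker\beta$ is actually established there. You correctly isolate this as a genuine step and give a valid reduction---take a minimal presentation with $R\subseteq F^{2}$, pass to the associated graded, and identify $\beta$ with the map induced by the bracket $V\otimes\mathcal{L}_{2}\to\mathcal{L}_{3}$ modulo $V\otimes\bar R$---after which the question becomes the degree-three structure of the free Lie superalgebra on $V$, namely that $\ker(V\otimes\mathcal{L}_{2}\to\mathcal{L}_{3})$ is spanned by the super Jacobiators. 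That identity is the super analogue of $\mathcal{L}_{3}\cong(V\otimes\Lambda^{2}V)/\Lambda^{3}V$ and your proposed route through PBW or a dimension count with the super Witt formula is the standard way to check it. So your outline is strictly more complete than the paper's own proof, and what you flag as the ``main obstacle'' is exactly the point the paper leaves unaddressed.
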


\begin{proof}   
By taking $c =1$ in Lemma \ref{p11}, we have the following exact sequence
	\[0\longrightarrow \mbox{ker}\beta\longrightarrow L^2 \otimes (L/L^2)\overset{\beta}\longrightarrow \mathcal{M}(L) \overset{\xi} \longrightarrow \mathcal{M}(L/L^2) \overset{\zeta} \longrightarrow  L^{2} \longrightarrow 0\]
From the graded Jacobi identity:\\
	$\beta((-1)^{|z||y|}((z+L^2)\otimes[x,y])+(-1)^{|x||z|}((x+L^2)\otimes[y,z])+(-1)^{|y||x|}((y+L^2)\otimes [z,x]))= (-1)^{|\tilde{z}||\tilde{y}|}(\tilde{z}\otimes[\tilde{x},\tilde{y}])+(-1)^{|\tilde{x}||\tilde{z}|}(\tilde{x}\otimes[\tilde{y},\tilde{z}])+(-1)^{|\tilde{y}||\tilde{x}|}(\tilde{y}\otimes [\tilde{z},\tilde{x}])=0$. Hence the proof follows.
\end{proof}

As $ L/L^2$ is abelian Lie superalgebra, $\mathcal{M}(L/L^2)=(L/L^2)\wedge (L/L^2)$. Therefore, we obtain the following exact sequence 
\[0\longrightarrow \mbox{ker}\beta\longrightarrow L^2 \otimes (L/L^2)\overset{\beta}\longrightarrow \mathcal{M}(L) \overset{\xi} \longrightarrow L/L^2\wedge L/L^2 \overset{\zeta} \longrightarrow  L^{2} \longrightarrow 0\] \\
Using the above exact sequence, we have 
\[\mathcal{M}(L)\cong \mbox{ker}\xi\oplus \mbox{im}\xi=\mbox{im}\beta\oplus \mbox{ker}\zeta, \] \\
thus
\begin{equation}\label{eq21}
\mathcal{M}(L)\cong \frac{(L/L^2)\otimes L^2}{\mbox{ker}\beta}\oplus \mbox{ker}\zeta.
\end{equation}

Now we establish a relation between the kernel of $\beta$ as in \ref{eq1.3} and the epicenter of a Lie superalgebra.

\begin{lemma}\label{lemm2.9}
Let $H$ be a central ideal of a finite-dimensional non-abelian nilpotent Lie superalgebra $L$ of class two and let $\beta$ be as in \ref{eq1.3}. Then $H \subseteq Z^*(L)$ if and only if $H \subseteq Z(L)\cap L^2$ and $\langle(-1)^{|r||y|}(r+L^2)\otimes z \in (L/L^2)\otimes L^2) ~\lvert~ z=[x,y] \in H;~ x,y,r \in L\rangle \subseteq \mbox{ker}\beta.$
\end{lemma}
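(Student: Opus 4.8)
The plan is to derive the lemma from the criterion of Lemma~\ref{lemm2.77}(2), which says that $H\subseteq Z^{*}(L)$ if and only if the map $\varphi$ of \eqref{eq2.1} satisfies $\mbox{im}\,\varphi=0$. The bridge to the map $\beta$ of \eqref{eq1.3} is the identity
\[
\varphi\bigl((r+L^{2})\otimes z\bigr)=-(-1)^{|r||z|}\,\beta\bigl((r+L^{2})\otimes z\bigr),
\]
valid for every $z\in H\cap L^{2}$ and every $r\in L$: when $z\in L^{2}$ one may use a preimage of $z$ in $F$ in both definitions, two choices of preimage differ by an element of $R$ and hence contribute an element of $[R,F]$ after bracketing with $\tilde{r}$, and in $F$ one has $[\tilde{z},\tilde{r}]=-(-1)^{|z||r|}[\tilde{r},\tilde{z}]$. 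Consequently $\varphi$ and $\beta$ have the same image on $(L/L^{2})\otimes(H\cap L^{2})$, and for $z=[x,y]$ one has $\beta((r+L^{2})\otimes[x,y])=[\tilde{r},[\tilde{x},\tilde{y}]]+[R,F]$, which is exactly the kind of element governed by the Jacobi generators of $\mbox{ker}\,\beta$ described in Theorem~\ref{theo28}. I would also note that $H\subseteq Z(L)$ is automatic since $H$ is a central ideal, and that the scalar $(-1)^{|r||y|}$ in the statement is immaterial, since rescaling a generator of a subspace by a unit does not change the subspace it spans.

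For the implication $(\Leftarrow)$, assume $H\subseteq Z(L)\cap L^{2}$ and that each $(-1)^{|r||y|}(r+L^{2})\otimes z$ with $z=[x,y]\in H$ lies in $\mbox{ker}\,\beta$. By the displayed identity, $\varphi$ vanishes on every $(r+L^{2})\otimes z$ with $z$ a bracket lying in $H$; writing an arbitrary element of $H\subseteq L^{2}$ through such brackets then gives $\mbox{im}\,\varphi=0$, and Lemma~\ref{lemm2.77}(2) yields $H\subseteq Z^{*}(L)$.

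For the implication $(\Rightarrow)$, assume $H\subseteq Z^{*}(L)$, so $\mbox{im}\,\varphi=0$ by Lemma~\ref{lemm2.77}(2). The decisive step is to show $H\subseteq L^{2}$. If this failed, then, $H$ being graded, there would be a homogeneous $h\in H$ with $h\notin L^{2}$; since $h\in Z(L)$ and $\langle h\rangle\cap L^{2}=0$, one may choose a graded vector-space decomposition $L=\langle h\rangle\oplus K$ with $K\supseteq L^{2}$. Then $K$ is a graded ideal of $L$: it is a subalgebra because $[K,K]\subseteq L^{2}\subseteq K$, and an ideal because $[K,L]=[K,\langle h\rangle]+[K,K]\subseteq L^{2}$, using that $h$ is central. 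As $L$ is non-abelian, $K$ is non-abelian (otherwise $L$ would be abelian), so $K^{2}\neq 0$; and since $K\cong L/\langle h\rangle$ is a nonzero nilpotent Lie superalgebra, $K\neq K^{2}$. Moreover $K^{2}\subseteq L^{2}\subseteq Z(L)$, so $K^{2}$ is a central ideal of $L$, with $h\notin K^{2}$. But $L/K^{2}\cong\langle h\rangle\oplus(K/K^{2})$ is an abelian Lie superalgebra of total dimension $1+\dim(K/K^{2})\geq 2$, hence capable by Theorem~\ref{th4.22}; therefore $Z^{*}(L)\subseteq K^{2}$, contradicting $h\in H\subseteq Z^{*}(L)$. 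Thus $H\subseteq L^{2}$, so $H\subseteq Z(L)\cap L^{2}$, and for every $r\in L$ and every $[x,y]\in H$ the displayed identity gives $\beta((r+L^{2})\otimes[x,y])=\pm\,\varphi((r+L^{2})\otimes[x,y])=0$, i.e.\ $(-1)^{|r||y|}(r+L^{2})\otimes[x,y]\in\mbox{ker}\,\beta$, which is the asserted condition.

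The step I expect to be the main obstacle is the one just used in $(\Rightarrow)$, namely extracting $H\subseteq L^{2}$ from $\mbox{im}\,\varphi=0$ --- equivalently, showing that a homogeneous central element lying outside $L^{2}$ cannot belong to the epicenter of a non-abelian nilpotent Lie superalgebra of class two. Splitting such an element off as a direct summand and passing to the abelian quotient $L/K^{2}$ in order to invoke Theorem~\ref{th4.22} seems the cleanest route; a direct computation in a free presentation, showing $[\tilde{h},\tilde{r}]\notin[R,F]$ for a suitable $r$, would also work but is less transparent. The remaining work --- the identity relating $\varphi$ and $\beta$, and the identification of $\mbox{ker}\,\beta$ through the Jacobi generators of Theorem~\ref{theo28} --- is routine manipulation of the graded bracket.
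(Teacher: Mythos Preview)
Your proposal is correct and follows essentially the same approach as the paper: both directions reduce to Lemma~\ref{lemm2.77}(2) via the identification of $\varphi$ with $\beta\circ i$ (up to the sign you correctly record, which the paper suppresses). For the step $H\subseteq L^{2}$ in $(\Rightarrow)$, the paper argues more directly that $L/L^{2}$ is a capable abelian Lie superalgebra by Theorem~\ref{th4.22}, whence $Z^{*}(L)\subseteq L^{2}$; your splitting $L=\langle h\rangle\oplus K$ reaches the same conclusion by a detour, since in fact $K^{2}=[K,K]=[L,L]=L^{2}$ (as $h$ is central), so the quotient $L/K^{2}$ you form is precisely $L/L^{2}$.
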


\begin{proof}
Let $H\subseteq Z^*(L)$. As $L$ is non-abelian nilpotent Lie superalgebra of class $2$, we can observe that $\dim L^2\leq \dim Z(L)\leq \dim L -2$, so by Theorem \ref{th4.22}, $L/L^2$ is capable Lie superalgebra. Now the definition of $Z^*(L)$ implies that $H\subseteq Z^*(L)\subseteq L^2$,  and in particular, $H \subseteq L^2$. If we consider the natural homomorphism $i :(L/L^2) \otimes H\longrightarrow(L/L^2) \otimes L^2$ and the map  $\varphi$ as in \ref{eq2.1}, then  $\mbox{im}(i) \subseteq (L/L^2) \otimes L^2$. Clearly, $\varphi =\beta i$, and hence $\beta((-1)^{|r||y|}(r+L^2) \otimes z) =\varphi((-1)^{|r||y|}(r+L^2) \otimes z)$ for every element $z=[x,y]\in H$. From Lemma \ref{lemm2.77}(ii), it follows that $\mbox{im}\varphi =0$, and so
	\[\beta i((L/L^2) \otimes H) = \varphi((L/L^2) \otimes H) = {0}.\]
Therefore, $\mbox{im}(i) \subseteq \mbox{ker} \beta$. Conversely, let $H\subseteq Z(L)\cap L^2$ and let $\mbox{im}(i) \subseteq \mbox{ker}\beta$. Since $\beta i((L/L^2) \otimes H) = \varphi((L/L^2) \otimes H)$, we have $\mbox{im}\varphi =0$. Thus Lemma \ref{lemm2.77}(ii) implies that $H\subseteq Z^*(L)$, as required.
\end{proof}

From the above lemma we can give another description of the epicenter of a nilpotent Lie superalgebra of class two.

\begin{corollary}\label{cor2.10}
Let $L$ be a nilpotent Lie superalgebra of class two, then
	\[Z^*(L)=\{z=[x,y] \in L^2 ~|~ (-1)^{|r||y|}(r+L^2)\otimes z \in \mbox{ker} \beta ~ {\rm{for ~ all~}} r \in L\}.  \]

\end{corollary}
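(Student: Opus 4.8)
The plan is to derive Corollary~\ref{cor2.10} directly from Lemma~\ref{lemm2.9} by applying it to principal (one-dimensional) central ideals of $L$. The key observation is that an ideal $H$ of $L$ satisfies $H\subseteq Z^*(L)$ if and only if every one-dimensional graded subideal of $H$ does; combined with the fact that $Z^*(L)$ is itself an ideal, this reduces the computation of $Z^*(L)$ to checking which elements $z$ can sit inside a central ideal contained in $Z^*(L)$.

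First I would fix a homogeneous element $z=[x,y]\in L^2$ and consider $H=\langle z\rangle$, the graded ideal generated by $z$. Since $L$ has class two, $L^2\subseteq Z(L)$, so $z\in Z(L)$ and hence $H$ is a central ideal with $H\subseteq Z(L)\cap L^2$; in particular $[L,H]=0$ and $H$ is literally spanned by $z$ together with possibly $0$, so $H\subseteq Z(L)\cap L^2$ holds automatically. Then Lemma~\ref{lemm2.9} says $H\subseteq Z^*(L)$ precisely when the subspace generated by the elements $(-1)^{|r||y|}(r+L^2)\otimes z$ for $r\in L$ lies in $\ker\beta$. So $z$ lies in some central ideal contained in $Z^*(L)$ — equivalently $z\in Z^*(L)$, since $Z^*(L)$ is an ideal and $\langle z\rangle\subseteq Z^*(L)$ as soon as $z\in Z^*(L)$ — exactly when $(-1)^{|r||y|}(r+L^2)\otimes z\in\ker\beta$ for all $r\in L$. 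This gives the inclusion of the right-hand set into $Z^*(L)$ and, conversely, that every homogeneous $z\in Z^*(L)\subseteq L^2$ satisfies the stated membership condition; extending by linearity over homogeneous components handles the general (non-homogeneous) case, using that $Z^*(L)$ is a graded ideal.

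One subtlety I would want to handle carefully is the case where $L$ is abelian: then $L^2=0$, the set on the right-hand side is $\{0\}$, and indeed $Z^*(L)\subseteq L^2=0$, so the identity holds trivially; thus the non-abelian hypothesis in Lemma~\ref{lemm2.9} costs nothing here. A second point is that Lemma~\ref{lemm2.9} is phrased for a central ideal $H$ rather than for a single element, so I must pass back and forth between "$z\in Z^*(L)$" and "$\langle z\rangle\subseteq Z^*(L)$"; this is legitimate because $Z^*(L)$ is an ideal (so $z\in Z^*(L)\Rightarrow\langle z\rangle\subseteq Z^*(L)$) and because $Z^*(L)=\bigcup\{H : H \text{ a central ideal with } H\subseteq Z^*(L)\}$ as a set, since every element of the ideal $Z^*(L)$ generates such an $H$.

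The main obstacle I anticipate is purely bookkeeping: making the equivalence "$z\in Z^*(L)$ $\iff$ $\{(-1)^{|r||y|}(r+L^2)\otimes z : r\in L\}\subseteq\ker\beta$" precise when $z$ is expressed as a bracket $[x,y]$ in more than one way, and checking that the condition is independent of the chosen representation $x,y$ — but this is immediate because $\ker\beta$ is a fixed subspace of $(L/L^2)\otimes L^2$ and the condition only references $z$ and $r$ through the tensor $(-1)^{|r||y|}(r+L^2)\otimes z$, whose membership in $\ker\beta$ is a property of $z$ and $r$ alone once one recalls that the sign $(-1)^{|r||y|}=(-1)^{|r||z|}$ since $|z|=|x|+|y|$ and the $|x|$-contribution is already absorbed (indeed one may as well write the condition with the sign $(-1)^{|r||z|}$). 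So the real content has already been proved in Lemma~\ref{lemm2.9}, and this corollary is just its restatement for principal central ideals.
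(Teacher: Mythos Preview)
Your approach is exactly the paper's: the corollary is stated there without proof, merely as the element-by-element restatement of Lemma~\ref{lemm2.9}, and applying the lemma to the one-dimensional central ideal $H=\langle z\rangle$ is the natural (and only) way to unpack it. One small slip in your final paragraph: the identity $(-1)^{|r||y|}=(-1)^{|r||z|}$ is false in general, since $|z|=|x|+|y|$ gives $(-1)^{|r||z|}=(-1)^{|r||x|}(-1)^{|r||y|}$; the correct reason the condition depends only on $z$ and $r$ is simply that $\ker\beta$ is a linear subspace, so multiplying a tensor by $\pm 1$ does not affect membership.
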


\begin{definition}
A finite dimensional Lie superalgebra $L$ is said to be generalized Heisenberg Lie superalgebra of rank $(m\mid n)$ if $Z(L) = L^2$ and $\dim L^2 = m + n$.
\end{definition}

\begin{theorem}\label{the2.14}\cite[Proposition 2.13]{GHLS2018} Let $L$ be an $(m\mid n)$dimensional nilpotent Lie superalgebra of nilpotency class two with $\dim L^2=(r\mid s )$.
Then $L = H \oplus A(l\mid k)$ and $Z^*(L) = Z^*(H)$, where $A(l\mid k)$ is  abelian lie superalgebra of dimension $(l \mid k)$ and $H$ is an $(m-l\mid n-k )$ dimensional generalized Heisenberg Lie superalgebra
superalgebra of rank $(r\mid s)$.
\end{theorem}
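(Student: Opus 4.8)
The plan is to prove the statement in two stages: a purely linear-algebraic splitting that produces $H$ and the abelian summand $A(l\mid k)$, and then a comparison of epicenters through Corollary \ref{cor2.10}.

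First I would build the decomposition. Since $L$ has nilpotency class two, $L^2\subseteq Z(L)$. Choose a $\mathbb{Z}_2$-graded complement $W$ of $L^2$ in $Z(L)$, then a $\mathbb{Z}_2$-graded complement $U$ of $Z(L)$ in $L$, and put $H=L^2\oplus U$ and $A(l\mid k)=W$, where $(l\mid k)=\dim W$. Then $H$ is a graded subspace containing $L^2\supseteq[H,H]$, hence a subalgebra, in fact an ideal since $[H,L]\subseteq[L,L]=L^2\subseteq H$; and $W$ is a central ideal, so $[H,W]=[W,W]=0$ and $L=H\oplus W$ as Lie superalgebras with $W$ abelian. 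Because every bracket in $L$ only involves the $H$-components of its arguments, $L^2=[L,L]=[H,H]=H^2$, so $\dim H^2=(r\mid s)$. Moreover an element $h\in H$ lies in $Z(L)$ iff $[h,H]=0$ iff $h\in Z(H)$, so $Z(H)=Z(L)\cap H$; and if $x\in Z(H)$ is written $x=a+w$ with $a\in L^2\subseteq H$ and $w\in W$, then $w=x-a\in H\cap W=0$, whence $Z(H)=L^2=H^2$. Thus $H$ is a generalized Heisenberg Lie superalgebra of rank $(r\mid s)$ and dimension $(m-l\mid n-k)$ (and, the class being exactly two, both $L$ and $H$ are non-abelian, so Corollary \ref{cor2.10} applies to each).

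Next I would compare the maps $\beta$ and $\beta'$ of \ref{eq1.3} attached to $L$ and to $H$ respectively. Writing $A=A(l\mid k)$ and using $L^2=H^2$ together with $L/L^2=(H/H^2)\oplus A$, we have $(L/L^2)\otimes L^2=\big((H/H^2)\otimes H^2\big)\oplus\big(A\otimes H^2\big)$, with $\ker\beta'\subseteq(H/H^2)\otimes H^2$. The key claim is $\ker\beta=\ker\beta'\oplus(A\otimes H^2)$. For the inclusion $A\otimes H^2\subseteq\ker\beta$, I would feed the Jacobi-type generators of Theorem \ref{theo28} the arguments $x,y\in H$ and $z\in A$: since $[y,z]=[z,x]=0$, the generator collapses to $\pm\,(z+L^2)\otimes[x,y]$, and elements of this form span $A\otimes H^2$. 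Conversely, writing a general triple as $x=h_x+a_x$, $y=h_y+a_y$, $z=h_z+a_z$ and using $[x,y]=[h_x,h_y]$ (and likewise for the other brackets), each Jacobi generator of $L$ splits as the corresponding Jacobi generator of $H$ (an element of $\ker\beta'$) plus an element of $A\otimes H^2$; since both pieces already lie in $\ker\beta$, the claim follows, the sum being direct because $\ker\beta'$ and $A\otimes H^2$ sit in complementary summands of $(L/L^2)\otimes L^2$.

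Finally I would apply Corollary \ref{cor2.10} to $L$ and to $H$. The single brackets of $L$ coincide with those of $H$ and all lie in $H^2$. For a bracket $z\in H^2$ and an element $r=h_r+a_r\in L$ we have $(r+L^2)\otimes z=(h_r+H^2)\otimes z+a_r\otimes z$ with $a_r\otimes z\in A\otimes H^2\subseteq\ker\beta$; hence $(r+L^2)\otimes z\in\ker\beta$ if and only if $(h_r+H^2)\otimes z\in\ker\beta$, equivalently $(h_r+H^2)\otimes z\in\ker\beta'$ (it lies in the first summand). As $r$ ranges over $L$ its component $h_r$ ranges over $H$, so Corollary \ref{cor2.10} gives $z\in Z^*(L)\iff z\in Z^*(H)$, that is $Z^*(L)=Z^*(H)$. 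I expect the one genuinely delicate point to be the bookkeeping of the $\mathbb{Z}_2$-signs when decomposing the Jacobi generators in the middle stage; the rest is routine linear algebra together with Theorem \ref{theo28} and Corollary \ref{cor2.10}.
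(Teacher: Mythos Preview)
The paper does not supply its own proof of this statement; it is quoted verbatim from \cite[Proposition 2.13]{GHLS2018} without argument. Your proof is correct and, pleasantly, is self-contained within the machinery the present paper develops: the splitting $L=H\oplus A$ is obtained by elementary graded linear algebra, and the equality $Z^*(L)=Z^*(H)$ is deduced from the description of $\ker\beta$ in Theorem \ref{theo28} together with the characterisation of the epicenter in Corollary \ref{cor2.10}. The central computation $\ker\beta=\ker\beta'\oplus(A\otimes H^2)$ is clean, and the sign bookkeeping you flag as delicate is indeed harmless, since a homogeneous element of $L$ decomposes into $H$- and $A$-components of the \emph{same} parity, so the Jacobi generator for $(x,y,z)$ splits term by term into the Jacobi generator for $(h_x,h_y,h_z)$ plus elements of $A\otimes H^2$.

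One small point worth making explicit in your final step: Corollary \ref{cor2.10} is stated for single brackets $z=[x,y]$ with a sign $(-1)^{|r||y|}$ attached, but the condition ``$(-1)^{|r||y|}(r+L^2)\otimes z\in\ker\beta$ for all $r$'' is equivalent to ``$(r+L^2)\otimes z\in\ker\beta$ for all $r$'' (the sign is a nonzero scalar) and is visibly linear in $z$. Hence membership in $Z^*$ depends only on $z\in L^2$, not on a chosen bracket presentation, which is what you implicitly use when passing from $z=[x,y]$ to $z=[h_x,h_y]$.
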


\section{Relation between the categories of Lie superalgebras of class two and skew-supersymmetric bilinear maps}

In this section, we introduce terminology and elementary properties for skew-supersymmetric bilinear maps, which will be used throughout this paper. Let $V=V_{\bar{0}} \oplus V_{\bar{1}}$ be a vector superspace, and  $Bil(V )$ denotes the space of all bilinear forms on $V$. It has a natural decomposition $Bil(V ) = (Bil V )_{\bar{0}} \oplus (Bil V )_{\bar{1}}$, where
\[ (Bil V )_{\bar{0}}=\{   f \in Bil (V )~|~ f(u, v) = 0 ~if~ |u| \neq|v|     \},       \]
\[ (Bil V )_{\bar{1}}=\{   f \in Bil( V) ~|~ f(u, v) = 0 ~if~ |u| \neq |v|+ 1     \}       .\]
Let $V$ and $W$ be two superspaces. A bilinear map $f:V\times V\longrightarrow W$ is said to be grading preserving map if $f(V_\alpha,V_\beta)\subseteq W_{\alpha+\beta}$ where $\alpha,\beta\in  \mathbb{Z}_{2}$. A bilinear map $f \in Bil(V ) $ is said to be supersymmetric (resp. skew-supersymmetric) if
\[f(u,v)=(-1)^{|u||v|}f(v,u)   \,\,\, (resp. \, f(u,v)=-(-1)^{|u||v|}f(v,u))\] 
for all homogeneous elements $u, v \in V $.

\begin{definition}
A bilinear map $f \in Bil(V ) $ is called non-degenerate if  $f(v, u) = 0$
for all $v \in  V$, then $u = 0$.
\end{definition}

Let $V$ and $W$ are two vector superspaces. Consider a skew-supersymmetric bilinear map $f:V\times V\longrightarrow W$. Put
$$f(V, V) = 	\{f(u, v) ~\lvert~ u,v \in V \}.$$
From here onwards, for every skew-supersymmetric bilinear map $f$, we assume that $W=f(V, V)$.

\smallskip
Let $\mathcal{SN}_2$ be the category of finite-dimensional nilpotent Lie superalgebras of class two over the field $\mathbb{F}$, and $\mathcal{SSKE}$ be the category whose objects are grading preserving skew-supersymmetric bilinear maps $f:V \times V\longrightarrow W$ such that $V$ and $W$ are non-trivial finite-dimensional superspaces over $\mathbb{F}$ and the image of $f$ spans $W$. The dimension of $W$ is the rank of $f$ and the dimension of $V$ is the dimension of $f$. In this section, we obtain an equivalence between the categories $\mathcal{SN}_2$ and $\mathcal{SSKE}$. Then we show that every object of the category $\mathcal{SN}_2$ corresponds to an object of the category $\mathcal{SSKE}$.

\begin{definition}\label{def3222}
An object $f:V\times V\longrightarrow W$ of the category  $\mathcal{SSKE}$ is called decomposable if $V$ can be decomposed into a non-trivial direct sum of two   
sub-superspaces $V=\overline{V_1}\oplus \overline{V_2}$ such that $f(v_1, v_2) =0$ for all $v_1 \in \overline{V_1}$ and $v_2 \in \overline{V_2}$. The subspace of $W$ spanned by $f(\overline{V_i}, \overline{V_i})$ is denoted by $\overline{W_i}$ for each $i =1, 2$. The map $\overline{f_i}:\overline{V_i}\times \overline{V_i}\longrightarrow \overline{W_i}$ is the restriction of $f$ to $\overline{V_i}\times \overline{V_i}$ for each $i =1, 2.$ In particular, $\overline{f_i}\in \mathcal{SSKE}$.
\end{definition}

Let $L$ be an object of $\mathcal{SN}_2$. Consider $U_L=L/L^2$ and $W_L=L^2$ as $\mathbb{F}$-superspaces. Define
\begin{equation}\label{eq11}
f_L: U_L\times U_L \longrightarrow W_L
\end{equation}\[(x+L^2,y+L^2)\longmapsto [x,y].\]
Then $f_L   \in \mathcal{SSKE}$. Moreover, the image of $f_L$ spans $L^2$.
Suppose $L_1$ and $L_2$ are two objects of $\mathcal{SN}_2$ and  $\delta:L_1\longrightarrow L_2$ is a Lie-homomorphism. Put
$U_{L_i}=L_i/L^2_i$ and $W_{L_i}=L^2_i$. We consider $U_{L_i}$ and $W_{L_i}$ as $\mathbb{F}$-superspaces for each $i =1, 2$. Now, we can define two Lie-homomorphisms
\begin{equation}\label{eq12}
\delta_1: U_{L_1}\longrightarrow U_{L_2}
\end{equation}
\[a+L^2_1\longmapsto \delta(a)+L^2_2,\]
and \[\delta_2=\delta\lvert_{W_{L_1}}:W_{L_1}\longrightarrow W_{L_2},\]
where $\delta\lvert_{W_{L_1}}$ denotes the restriction of $\delta$ on $L^2_1$ such that $\delta_2f_{L_1}=f_{L_2}(\delta_1\times\delta_1)$, i.e., the following diagram is commutative

\begin{center}
\begin{tikzpicture}[>=latex]
\node (x) at (0,0) {\(U_{L_1}\times U_{L_1}  \)};
\node (z) at (0,-3) {\(U_{L_2}\times U_{L_2}\)};
\node (y) at (3,0) {\(W_{L_1}\)};
\node (w) at (3,-3) {\(W_{L_2}\)};
\draw[->] (x) -- (y) node[midway,above] {$f_{L_1}$};
\draw[->] (x) -- (z) node[midway,left] {$\delta_1\times\delta_1$};
\draw[->] (z) -- (w) node[midway,below] {$f_{L_2}$};
\draw[->] (y) -- (w) node[midway,right] {$\delta_2$};
\end{tikzpicture}\\
\end{center}

Thus the pair $(\delta_1,\delta_2)$ is a morphism between $f_{L_1}$ and $f_{L_2}$.\\
\smallskip

For an object $f:V\times V\longrightarrow W$ in $\mathcal{SSKE}$, we can define a  Lie superalgebra $L_f$ in $\mathcal{SN}_2$ by choosing a basis $\{u_1, \ldots, u_m; v_1, \ldots,v_n\}$ for $V$ where $|u_i| =0$ for all $1\leq i\leq m$ ,  $|v_j|=1$ for all $1\leq j\leq n$, and a basis $\{w_1,\ldots, w_r; q_1,\ldots,q_s\}$ for $W$ where $|w_k| =0$ for all $1\leq k\leq r$ ,  $|q_t|=1$ for all $1\leq t\leq s$. Then $\{((u_i,0), 0),((0,v_j), 0),(0, (w_k,0)),(0, (0,q_t)) ~|~ 1 \leq i \leq m, 1 \leq j\leq n,1 \leq k\leq r,1 \leq t\leq s\}$ is a basis for the superspace $L_f=V\oplus W $. We can define a bracket $[.\, , .] :L_f\times L_f\longrightarrow L_f$ given by $$[((x, y), (w, l)),((x',y'),(w',l'))] =(0,f((x,y),(x',y'))),$$ where $x, y,x',y' \in V; w, l,w',l' \in W$. We have
 
\begin{equation}\label{eq333}
L_f=V\oplus W\cong \left\langle u_i,v_j,w_k,q_t ~|~ B(u_i,u_p)=[u_i,u_p], B(v_j,v_o)=[v_j,v_o],B(u_i,v_j)=[u_i,v_j], w_k,q_t \in Z(L_B) \right\rangle
\end{equation}
Now, it is clear that $L_f$ is an object of the category $\mathcal{SN}_2$.

\begin{theorem}\label{th33}
Define the functor $\mathcal{B}$ from the category $\mathcal{SN}_2$ to the category $\mathcal{SSKE}$ by
\[L \longrightarrow f_L \,\,\,\,\, ~ {\rm{for ~ each~object~}} L \, {\rm{in~}} \mathcal{SN}_2\]
\[\delta\longmapsto (\delta_1,\delta_2) \,\,\,\,\,\,~ {\rm{for ~ each~ morphism~}} \, \delta: L_1\longrightarrow L_2 \,\, {\rm{in~}}  \mathcal{SN}_2 \]
where the pair $(\delta_1,\delta_2)$ is defined in \ref{eq12} and the map $f_L$ is defined in \ref{eq11}. Then the following properties hold:
\begin{enumerate}
\item The functor $\mathcal{B}$ is an equivalence of the categories $\mathcal{SN}_2$ and $\mathcal{SSKE}$.
\item A morphism $\delta:L\longrightarrow H$ is an isomorphism in $\mathcal{SN}_2$ if and only if $\mathcal{B}(\delta)$is an isomorphism in $\mathcal{SSKE}$.
\end{enumerate}
\end{theorem}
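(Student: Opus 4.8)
The plan is to construct an explicit quasi-inverse functor $\mathcal{L}\colon\mathcal{SSKE}\to\mathcal{SN}_2$ and two natural isomorphisms $\mathcal{B}\circ\mathcal{L}\cong\mathrm{Id}_{\mathcal{SSKE}}$ and $\mathcal{L}\circ\mathcal{B}\cong\mathrm{Id}_{\mathcal{SN}_2}$; this gives $(1)$, and $(2)$ then reduces to a short exact sequence argument. On objects, set $\mathcal{L}(f)=L_f$, the Lie superalgebra attached to $f$ in \ref{eq333}. On a morphism $(\alpha,\beta)\colon f_1\to f_2$ of $\mathcal{SSKE}$ — so $\alpha\colon V_1\to V_2$ and $\beta\colon W_1\to W_2$ are even linear maps with $\beta\circ f_1=f_2\circ(\alpha\times\alpha)$ — set $\mathcal{L}(\alpha,\beta)=\alpha\oplus\beta\colon V_1\oplus W_1\to V_2\oplus W_2$. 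First I would check that $\alpha\oplus\beta$ is a morphism in $\mathcal{SN}_2$: since the bracket of $L_{f_i}$ takes values in $W_i$ and depends only on the $V_i$-components, the identity $\beta\circ f_1=f_2\circ(\alpha\times\alpha)$ is exactly what makes $\alpha\oplus\beta$ bracket-preserving, and preservation of identities and composites is immediate, so $\mathcal{L}$ is a functor. (That $\mathcal{B}$ itself is a well-defined functor is already recorded in the set-up above.)

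Next I would establish the two natural isomorphisms. For $\mathcal{B}\circ\mathcal{L}\cong\mathrm{Id}_{\mathcal{SSKE}}$: given $f\colon V\times V\to W$, the derived subalgebra $L_f^2$ is spanned by the elements $(0,f(v,v'))$, hence equals $0\oplus W$ since the image of $f$ spans $W$; therefore $L_f/L_f^2\cong V$ canonically, and transporting $f_{L_f}$ along the canonical identifications $L_f/L_f^2\cong V$, $L_f^2\cong W$ returns exactly $f$, naturally in $f$. For $\mathcal{L}\circ\mathcal{B}\cong\mathrm{Id}_{\mathcal{SN}_2}$: for each $L\in\mathcal{SN}_2$ fix a $\mathbb{Z}_2$-graded vector-space complement $C_L$ of $L^2$ in $L$, so that the projection restricts to an isomorphism $C_L\xrightarrow{\sim}L/L^2$. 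Since $L$ has class two, $L^2\subseteq Z(L)$, so the whole bracket of $L$ is determined by its restriction to $C_L\times C_L$, and under $C_L\cong L/L^2$ this restriction is precisely $f_L$; consequently $x=c+a\mapsto(c+L^2,a)$ (for $c\in C_L$, $a\in L^2$) defines an isomorphism $\eta_L\colon L\xrightarrow{\sim}L_{f_L}$ in $\mathcal{SN}_2$.

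The step I expect to be the main obstacle is the naturality of $\eta_L$. Given $\delta\colon L_1\to L_2$ one must verify $\eta_{L_2}\circ\delta=(\delta_1\oplus\delta_2)\circ\eta_{L_1}$, and a priori $\delta$ need not carry the chosen complement $C_{L_1}$ into $C_{L_2}$, so the two composites differ by a map into $0\oplus L_2^2$ that vanishes on $L_1^2$. I would control this by choosing the complements $C_L$ in a compatible fashion and checking that the resulting modification is coherent — equivalently, that different systems of complements give naturally isomorphic composite functors; making this bookkeeping precise (and, if necessary, restricting to the subgroupoid of isomorphisms, which is what the later applications use) is the genuinely technical part of the argument.

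Finally, for $(2)$: if $\delta\colon L\to H$ is an isomorphism in $\mathcal{SN}_2$ then $\delta(L^2)=H^2$, so $\delta_1\colon L/L^2\to H/H^2$ and $\delta_2=\delta|_{L^2}\colon L^2\to H^2$ are both linear isomorphisms, and by construction they intertwine $f_L$ and $f_H$; hence $\mathcal{B}(\delta)=(\delta_1,\delta_2)$ is an isomorphism in $\mathcal{SSKE}$. Conversely, if $\mathcal{B}(\delta)=(\delta_1,\delta_2)$ is an isomorphism in $\mathcal{SSKE}$, then $\delta_1$ and $\delta_2$ are bijective, and applying the five lemma to the morphism of short exact sequences $0\to L^2\to L\to L/L^2\to 0$ and $0\to H^2\to H\to H/H^2\to 0$ induced by $\delta$ shows $\delta$ is bijective; being already a homomorphism of Lie superalgebras, $\delta$ is an isomorphism in $\mathcal{SN}_2$.
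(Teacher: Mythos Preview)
Your approach via an explicit quasi-inverse $\mathcal{L}$ is different from the paper's, which argues directly that $\mathcal{B}$ is full, faithful, and essentially surjective. Your construction of $\mathcal{L}$, the check that $\mathcal{B}\circ\mathcal{L}\cong\mathrm{Id}_{\mathcal{SSKE}}$, and your treatment of part~(2) via the five lemma are all sound and somewhat cleaner than the corresponding parts of the paper.

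However, the naturality problem you flag for $\eta_L$ is not mere bookkeeping: it is a genuine obstruction, because $\mathcal{B}$ is \emph{not faithful} on the categories as defined. Take $L=H(1,0)$ with basis $x,y,z$ and $[x,y]=z$; set $\theta=\mathrm{id}_L$ and let $\xi$ be the automorphism $\xi(x)=x+z$, $\xi(y)=y$, $\xi(z)=z$. Then $\theta\neq\xi$, yet $\theta_1=\xi_1$ on $L/L^2$ (since $z\in L^2$) and $\theta_2=\xi_2$ on $L^2$, so $\mathcal{B}(\theta)=\mathcal{B}(\xi)$. No choice of complements can make your $\eta$ natural against both $\theta$ and $\xi$ simultaneously, and ``restricting to the subgroupoid of isomorphisms'' does not help since both are automorphisms. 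The paper's proof breaks at the exact analogue of this point: in its faithfulness step it asserts, without justification, that a Lie homomorphism carries a fixed complement of $L_1^2$ into the fixed complement of $L_2^2$, which the same example refutes. So the gap you isolated cannot be closed as the statement stands; the claim would need to be weakened (for instance to a bijection on isomorphism classes of objects together with part~(2), which is all the later applications actually use) or the morphisms of $\mathcal{SN}_2$ redefined modulo maps of the form $x\mapsto x+\varphi(x)$ with $\varphi(L)\subseteq L^2$ and $\varphi|_{L^2}=0$.
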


\begin{proof}
(1) Let $L_1, L_2 \in \mathcal{SN}_2$. Put $U_i=L_i/L^2_i$, $W_i=L^2_i$, and the map $f_{L_i}$ as in \ref{eq11}. Let $(\delta_U, \delta_W)$ be a morphism from $f_{L_1}$ to $f_{L_2}$ such that $\delta_U:U_1\longrightarrow U_2$ and $\delta_W:W_1\longrightarrow W_2$. Suppose that $L_1\cong U_1\oplus W_1$ and $L_2\cong U_2\oplus W_2$ are as in \ref{eq333}. We obtain a Lie-homomorphism  $\varphi:L_1\longrightarrow L_2$ such that $\mathcal{B}(\varphi) =(\delta_U, \delta_W)$. Define
	\[\varphi:U_1\oplus W_1 \longrightarrow U_2\oplus W_2 \]
	\[((x, y), (w, l))\longmapsto (\delta_U((x, y)),\delta_W((w, l))) \]
	
It is clear that $\varphi$ is an even linear map. Let $((x_i, y_i), (w_i, l_i)) \in  U_1\oplus W_1$ for $i =1, 2$. Then
\begin{align*}
\varphi(((x_1, y_1), (w_1, l_1)),((x_2, y_2), (w_2, l_2)))& =(0,f_{L_1}((x_1, y_1),(x_2, y_2)  )\\
		&=(0,\delta_W(f_{L_1}((x_1, y_1),(x_2, y_2)  )))\\
		&=(0,f_{L_2}(\delta_U((x_1, y_1),(x_2, y_2)  )))\\
		&=[(\delta_U((x_1, y_1),\delta_W ((w_1, l_1))),(\delta_U((x_2, y_2)), \delta_W((w_2, l_2)))]\\
		&=[\varphi((x_1, y_1), (w_1, l_1)),\varphi((x_2, y_2), (w_2, l_2))].\\
\end{align*}
Therefore, $\varphi$ is a Lie-homomorphism such that $\mathcal{B}(\varphi)=(\delta_U, \delta_W)$. Thus, the functor $\mathcal{B}$ is full.\\
	
Consider $U_i=L_i/L^2_i$ and $W_i=L^2_i$ as $\mathbb{F}$-superspaces. It is clear that $\mathcal{B}$ induces the map $\mathcal{B}_1:Hom_{\mathcal{SN}_2}(L_1, L_2) \longrightarrow Hom_{\mathcal{SSKE}}(f_{L_1}, f_{L_2})$ given by $\delta\longmapsto(\delta_1,\delta_2)$ for each morphism $\delta:L_1\longrightarrow L_2$, where the pair $(\delta_1,\delta_2)$ is defined as in \ref{eq12}. Now we check that  $\mathcal{B}_1$ is injective.\\
 
 Let $\theta, \xi \in Hom_{\mathcal{SN}_2}(L_1, L_2)$ with $\mathcal{B}_1(\theta) =\mathcal{B}_1(\xi)$. Then $\theta_1(u+L^2)=\xi_1(u+L^2)$ and $\theta_2(a)=\xi_2(a)$, where $u \in L_1/L^2_1, a\in L^2_1$. If $\theta=0$ or $\xi=0$, then $\theta=\xi=0$. Now, suppose that $\theta$ and $\xi$ are non-zero homomorphisms. Consider $L_1\cong U_1\oplus W_1$ and $L_2\cong U_2\oplus W_2$ as superspaces. Let $\{x_1+L^2_1, \ldots, x_{n+m}+L^2_1\}$ be a basis of $U_1$ where $|x_i| =0$ for all $1\leq i\leq m$ ,  $|x_i|=1$ for all $m< i\leq m+n$  and let $\{y_1+L^2_2, \ldots, y_{r+s}+L^2_2\}$ be a basis of $U_2$  where $|y_j| =0$ for all $1\leq j\leq r$ ,  $|y_j|=1$ for all $r< j\leq r+s$. As image $\theta$ and image $\xi$ are contained in $L^2_2$, we conclude that $\theta$ and $\xi$ are mapped 	$\left\langle x_1, \ldots, x_{n+m} \right\rangle \cong U_1$  to $\left\langle y_1, \ldots, y_{r+s} \right\rangle \cong U_2$. Since $x_i+L^2_1\in U_1$ for all $i$ with $1 \leq i \leq m+n$, we obtain that $\theta(x_i)$ and $\xi(x_i)$ are two elements of the superspace $\left\langle y_1, \ldots, y_{r+s} \right\rangle  \cong U_2$ for all $i$ with $1 \leq i \leq m+n$. Now, using \ref{eq12}, we have $\theta(x_i)-\xi(x_i)\in L^2_2$ with $1 \leq i \leq m+n$. Hence, $\theta(x_i) =\xi(x_i)$ for all $i$ with $1 \leq i \leq m+n$. As $\theta(a) =\xi(a)$ for $a \in L^2_1$, we get $\theta=\xi$ as a linear map. Since $\theta([x_i, x_k]) =[\theta(x_i), \theta(x_k )] =[\xi(x_i), \xi(x_k )] = \xi([x_i, x_k ])$ for all $i, k$ with $1 \leq i, k\leq m+n$, therefore $\theta=\xi$ as a Lie-superhomomorphism. Thus, $\mathcal{B}_1$ is injective and the functor is faithful.\\
	 
Let $f \in \mathcal{SSKE}$. From the above argument of this theorem, there is a Lie superalgebra $L_f$ such that $\mathcal{B}(L_f) =f$. Thus every object of $\mathcal{SSKE}$ is isomorphic to an object of the form $\mathcal{B}(H)$ for some Lie superalgebra $H$ in  $\mathcal{SN}_2$. Therefore, the functor $\mathcal{B}$ is essentially surjective. Thus the functor $\mathcal{B}$ is an equivalence of the categories as it is full, faithful and essentially surjective.\\
	
 (2) Assume that $\delta:L\longrightarrow H$ is a Lie-superhomomorphism, and let $\mathcal{B}(\delta) =(\delta_1, \delta_2)$ be a morphism as in \ref{eq12}. Let $\delta$ be an isomorphism. Then $\delta_1$ and $\delta_2$ are both bijective maps. Thus $(\delta_1, \delta_2)$ is an isomorphism between $f_L$ and $f_H$. Conversely, the pair $(\delta_1, \delta_2)$ is an isomorphism between $f_L$ and $f_H$. Since $\delta_1$ is an isomorphism, we get $\mbox{ker}\delta\subseteq L^2$ and $H=\delta(L) +H^2$. As $\delta_2$ is an isomorphism, then $\mbox{ker}\delta=0$, and so $H^2=\delta(L^2) \subseteq \delta(L)$. Thus $\delta$ is an isomorphism.
\end{proof}

The following result is immediately obtained from Theorem \ref{th33}.

\begin{corollary}
The categories $\mathcal{SN}_2$ and $\mathcal{SSKE}$ are equivalent.
\end{corollary}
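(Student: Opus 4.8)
The plan is to read the statement straight off Theorem~\ref{th33}. Recall that two categories $\mathcal{C}$ and $\mathcal{D}$ are \emph{equivalent} precisely when there is a functor $F\colon\mathcal{C}\to\mathcal{D}$ that is full, faithful, and essentially surjective (equivalently, one admitting a quasi-inverse). Theorem~\ref{th33}(1) produces exactly such a functor, namely $\mathcal{B}\colon\mathcal{SN}_2\to\mathcal{SSKE}$, whose fullness, faithfulness, and essential surjectivity were each verified there. Hence $\mathcal{SN}_2$ and $\mathcal{SSKE}$ are equivalent, and nothing further is required; this is why the corollary is labelled an immediate consequence.

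If one wanted the equivalence spelled out via an explicit quasi-inverse, I would proceed in two steps. First, I would promote the object assignment $f\mapsto L_f$ of \ref{eq333} to a functor $\mathcal{L}\colon\mathcal{SSKE}\to\mathcal{SN}_2$, sending a morphism $(\delta_U,\delta_W)$ of skew-supersymmetric bilinear maps to the Lie superalgebra homomorphism $\varphi$ built in the proof of Theorem~\ref{th33}(1) (the one forced by the commuting square that precedes \ref{eq333}); functoriality follows since composition of such $\varphi$'s is again of the same form. Second, I would exhibit natural isomorphisms $\mathcal{L}\circ\mathcal{B}\cong \mathrm{Id}_{\mathcal{SN}_2}$ and $\mathcal{B}\circ\mathcal{L}\cong\mathrm{Id}_{\mathcal{SSKE}}$: for $L$ in $\mathcal{SN}_2$ the canonical identification $L\cong (L/L^2)\oplus L^2$ underlying \ref{eq333} is the component at $L$ of the first, and for $f\colon V\times V\to W$ in $\mathcal{SSKE}$ the equality $\mathcal{B}(L_f)=f$ is the component at $f$ of the second. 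Naturality in each case is a short diagram chase unwinding the definitions in \ref{eq12}.

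The only real work — checking that every morphism of bilinear maps lifts to a Lie superalgebra homomorphism and that the lift is determined on generators — has already been carried out in Theorem~\ref{th33}, so I do not anticipate any obstacle here: the corollary is a formal consequence of that theorem together with the definition of equivalence of categories.
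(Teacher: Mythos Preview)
Your proposal is correct and matches the paper's approach exactly: the paper states that this corollary ``is immediately obtained from Theorem~\ref{th33}'' with no further argument, and your first paragraph does precisely that. The additional outline of an explicit quasi-inverse is extra but harmless.
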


For fixed $m,n$ and $r,s$ with $1 \leq r < m$, $1 \leq s < n$, let $\mathcal{SN}_2(m \mid n, r\mid s)$ be a subcategory of the category $\mathcal{SN}_2$ whose objects are $(m \mid n)$-dimensional Lie superalgebras with derived subalgebra of dimension $(r\mid s)$ in $\mathcal{SN}_2$. Suppose that $\mathcal{SSKE}(m-r \mid n-s,r \mid s)$ is a subcategory of the category $\mathcal{SSKE}$ whose objects are objects of rank $(r \mid s)$ and dimension $(m-r \mid n-s)$ in $\mathcal{SSKE}$.

\begin{theorem}
Let $\tilde{\mathcal{B}}$ be a restriction of the functor $\mathcal{B}$ defined in Theorem \ref{th33} to the subcategory $\mathcal{SN}_2(m \mid n, r\mid s)$. Then the following properties hold:
\begin{enumerate}
\item The functor $\tilde{\mathcal{B}}$ is an equivalence of the categories $\mathcal{SN}_2(m \mid n, r\mid s)$ and $\mathcal{SSKE}(m-r \mid n-s,r \mid s)$.
\item A morphism $\delta:L\longrightarrow H$ is an isomorphism in $\mathcal{SN}_2(m \mid n, r\mid s)$ if and only if $\tilde{\mathcal{B}}(\delta)$ is an isomorphism in $\mathcal{SSKE}(m-r \mid n-s,r \mid s)$.
\end{enumerate}
\end{theorem}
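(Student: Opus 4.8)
The plan is to reduce the whole statement to Theorem \ref{th33} together with a short superdimension count. First I would check that $\tilde{\mathcal{B}}$ is well defined as a functor into $\mathcal{SSKE}(m-r \mid n-s,r \mid s)$: if $L\in\mathcal{SN}_2(m \mid n, r\mid s)$, then $U_L=L/L^2$ has dimension $(m-r\mid n-s)$ and $W_L=L^2$ has dimension $(r\mid s)$, and since $L^2=[L,L]$ is spanned by the brackets $[x,y]=f_L(x+L^2,y+L^2)$, the image of $f_L$ spans $W_L$; hence $f_L$ has dimension $(m-r\mid n-s)$ and rank $(r\mid s)$, so $f_L$ is an object of $\mathcal{SSKE}(m-r \mid n-s,r \mid s)$. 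A morphism $\delta$ of $\mathcal{SN}_2(m \mid n, r\mid s)$ is in particular a morphism of $\mathcal{SN}_2$, so $\mathcal{B}(\delta)=(\delta_1,\delta_2)$ is already defined in \ref{eq12}, and by the same observation applied to its source and target it is a morphism of $\mathcal{SSKE}(m-r \mid n-s,r \mid s)$.

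For part (1), I would use that $\mathcal{SN}_2(m \mid n, r\mid s)$ and $\mathcal{SSKE}(m-r \mid n-s,r \mid s)$ are full subcategories, so their hom-sets coincide with those of $\mathcal{SN}_2$ and $\mathcal{SSKE}$; fullness and faithfulness of $\tilde{\mathcal{B}}$ then follow immediately from the corresponding properties of $\mathcal{B}$ established in Theorem \ref{th33}. For essential surjectivity, given $f:V\times V\longrightarrow W$ in $\mathcal{SSKE}(m-r \mid n-s,r \mid s)$ I would form the Lie superalgebra $L_f=V\oplus W$ as in \ref{eq333}; then $\dim L_f=(\,(m-r)+r \mid (n-s)+s\,)=(m\mid n)$, and $L_f^2=W$ because the image of $f$ spans $W$ and $W$ lies in the centre of $L_f$, so $\dim L_f^2=(r\mid s)$ and $L_f$ has nilpotency class two. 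Hence $L_f\in\mathcal{SN}_2(m \mid n, r\mid s)$ and $\mathcal{B}(L_f)\cong f$, so $\tilde{\mathcal{B}}$ is essentially surjective, and therefore an equivalence of the two subcategories.

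For part (2), I would invoke that an equivalence of categories both preserves and reflects isomorphisms, so the statement is immediate from (1); alternatively, since the subcategories are full, a morphism $\delta:L\longrightarrow H$ between objects of $\mathcal{SN}_2(m \mid n, r\mid s)$ is an isomorphism there exactly when it is one in $\mathcal{SN}_2$ (its inverse automatically lands in the subcategory), and likewise on the $\mathcal{SSKE}$ side, so it also follows directly from part (2) of Theorem \ref{th33}. The one step that needs genuine care — although it is not heavy — is the superdimension bookkeeping in the essential-surjectivity argument: one must verify that $L_f^2$ has dimension \emph{exactly} $(r\mid s)$ rather than something smaller, and this is precisely what the defining requirement in $\mathcal{SSKE}$ that the image of $f$ span $W$ guarantees.
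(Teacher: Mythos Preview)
Your proposal is correct. The paper's own proof consists solely of the sentence ``The proof is similar to the proof of Theorem \ref{th33}'', so your argument is in fact more detailed than what appears there; in particular, your reduction via the full-subcategory observation and the superdimension bookkeeping for $L_f$ makes explicit exactly what ``similar'' would have to mean, and nothing further is needed.
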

\begin{proof}
The proof is similar to the proof of Theorem \ref{th33}.
\end{proof}

\begin{corollary}
The categories $\mathcal{SN}_2(m \mid n, r\mid s)$ and $\mathcal{SSKE}(m-r \mid n-s,r \mid s)$ are equivalent.
\end{corollary}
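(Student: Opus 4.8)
The plan is to obtain this as an immediate consequence of part (1) of the preceding theorem. First I would recall the definition at issue: two categories are \emph{equivalent} precisely when there exists a functor between them that is full, faithful and essentially surjective (equivalently, one admitting a quasi-inverse together with the two natural isomorphisms). Since the preceding theorem asserts exactly that the restricted functor $\tilde{\mathcal{B}}$ enjoys these three properties, the asserted equivalence of $\mathcal{SN}_2(m \mid n, r\mid s)$ and $\mathcal{SSKE}(m-r \mid n-s,r \mid s)$ will drop out at once.

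Before quoting that theorem I would verify that $\tilde{\mathcal{B}}$ is indeed a functor \emph{between the two stated subcategories}, i.e. that it respects the dimension and rank constraints. If $L\in\mathcal{SN}_2(m \mid n, r\mid s)$, then $U_L=L/L^2$ has superdimension $(m-r\mid n-s)$ and $W_L=L^2$ has superdimension $(r\mid s)$, and by the remark following \ref{eq11} the image of $f_L$ spans $L^2=W_L$; hence $f_L$ has dimension $(m-r\mid n-s)$ and rank $(r\mid s)$, so $f_L\in\mathcal{SSKE}(m-r \mid n-s,r \mid s)$. A parallel check shows that for a morphism $\delta$ of $\mathcal{SN}_2(m \mid n, r\mid s)$ the pair $(\delta_1,\delta_2)$ of \ref{eq12} is a morphism of $\mathcal{SSKE}(m-r \mid n-s,r \mid s)$, and functoriality (compatibility with identities and composition) is inherited from $\mathcal{B}$.

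With this in hand I would simply invoke part (1) of the preceding theorem — $\tilde{\mathcal{B}}$ is full, faithful and essentially surjective — to conclude that $\tilde{\mathcal{B}}$ is an equivalence of categories, whence $\mathcal{SN}_2(m \mid n, r\mid s)$ and $\mathcal{SSKE}(m-r \mid n-s,r \mid s)$ are equivalent. The only genuine content, and hence the only possible obstacle, lies in the essential surjectivity (and full faithfulness) of $\tilde{\mathcal{B}}$: one must check that the construction $f\mapsto L_f$ of \ref{eq333} restricts correctly, namely that $L_f\in\mathcal{SN}_2(m \mid n, r\mid s)$ for every $f\in\mathcal{SSKE}(m-r \mid n-s,r \mid s)$ and that $\tilde{\mathcal{B}}(L_f)\cong f$. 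Since $\dim L_f=\dim V+\dim W=(m-r\mid n-s)+(r\mid s)=(m\mid n)$ and $\dim L_f^2=(r\mid s)$, this is automatic, and the remaining verifications are identical to those in the proof of Theorem \ref{th33}; passing to the subcategories introduces no new difficulty.
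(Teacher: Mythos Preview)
Your proposal is correct and follows the same route as the paper: the corollary is an immediate consequence of part (1) of the preceding theorem, which already asserts that $\tilde{\mathcal{B}}$ is an equivalence of the two subcategories. In fact the paper gives no separate proof for this corollary at all; your additional checks (that $\tilde{\mathcal{B}}$ lands in the correct subcategory and that $f\mapsto L_f$ restricts appropriately) are more detail than the paper supplies, but they are the natural content behind the claim and do no harm.
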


\section{Applications of equivalence categories $\mathcal{SN}_2$ and $\mathcal{SSKE}$}

In the previous section we have seen that the categories $\mathcal{SN}_2$ and $\mathcal{SSKE}$ are equivalent. In this section, we discuss some application of it, i.e., we discuss the Schur multiplier and the epicenter for the object of $\mathcal{SSKE}$.\\


Let $V=V_0\oplus V_1$ be a superspace. A  bilinear map $f \in Bil(V ) $ is called even (resp. odd) if  $f(V_{\bar{0}}, V_{\bar{1}}) = 0$, (resp. $f(V_{\bar{0}}, V_{\bar{0}}) = f(V_{\bar{1}}, V_{\bar{1}})=0)$. We denote the operator of the usual tensor product of vector spaces by $\otimes$. Let $f:V\times V\longrightarrow W$ be an object of $\mathcal{SSKE}$. We define 
$$X_f  = \left\langle(-1)^{|a||c|}a \otimes f(b, c) +(-1)^{|b||a|}b \otimes f(c, a) +(-1)^{|c||b|}c \otimes f(a, b)~|~ a, b, c\in V \right\rangle.   $$ 
Then $X_f$ is supersubspace of $V\otimes W$. Let $\rho:V\wedge V\longrightarrow W$ be defined by $$v_1\wedge v_2\longmapsto B(v_1,v_2).$$ Then $\rho$ is a surjective (skew-symmetric)bilinear map. Now, we define the Schur multiplier of $f$ as follows:
\[\mathcal{M}(f)=\frac{U\otimes W}{X_f}\oplus \mbox{ker}\rho.\]

By using \ref{eq21} and the Theorem \ref{theo28}, we have the following proposition.

\begin{proposition}
Let $L$ be a Lie superalgebra in $\mathcal{SN}_2$ and let $f_L$ be defined in \ref{eq11}. Then
 \[\mathcal{M}(L)=\mathcal{M}(f_L).\]
\end{proposition}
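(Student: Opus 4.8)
The plan is to show that both sides of the identity $\mathcal{M}(L) = \mathcal{M}(f_L)$ are computed by the same formula, using the structural decomposition of $\mathcal{M}(L)$ established in Theorem~\ref{theo28} together with the definition of $\mathcal{M}(f_L)$. First I would recall that for $L$ in $\mathcal{SN}_2$ we have $U_L = L/L^2$ and $W_L = L^2$, and that the map $f_L$ of \ref{eq11} is precisely the bracket read through the quotient. By equation \ref{eq21}, which itself rests on the exact sequence of Theorem~\ref{theo28}, we already know
\[
\mathcal{M}(L) \cong \frac{(L/L^2)\otimes L^2}{\ker\beta}\oplus \ker\zeta,
\]
so the task reduces to matching $\ker\beta$ with $X_{f_L}$ and matching $\ker\zeta$ with $\ker\rho$.

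The first identification is the heart of the matter: I would compare the explicit generating set of $\ker\beta$ given at the end of Theorem~\ref{theo28},
\[
\ker\beta = \big\langle (-1)^{|z||y|}\big((z+L^2)\otimes[x,y]\big) + (-1)^{|x||z|}\big((x+L^2)\otimes[y,z]\big) + (-1)^{|y||x|}\big((y+L^2)\otimes[z,x]\big) \ \big|\ x,y,z\in L \big\rangle,
\]
with the definition of $X_{f_L}$, namely
\[
X_{f_L} = \big\langle (-1)^{|a||c|} a\otimes f_L(b,c) + (-1)^{|b||a|} b\otimes f_L(c,a) + (-1)^{|c||b|} c\otimes f_L(a,b) \ \big|\ a,b,c\in V \big\rangle,
\]
where here $V = U_L = L/L^2$ and $f_L(b,c) = [b,c]\in L^2 = W$. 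Writing $a = z+L^2$, $b = x+L^2$, $c = y+L^2$ and substituting $f_L(x+L^2,y+L^2) = [x,y]$ etc., one sees the two generating families coincide term for term once the sign exponents are matched (using $|z+L^2| = |z|$ and the cyclic relabeling $a\mapsto z,\ b\mapsto x,\ c\mapsto y$). Hence $\ker\beta = X_{f_L}$ as subspaces of $(L/L^2)\otimes L^2 = U\otimes W$.

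For the second identification, I would invoke the observation made in the excerpt just before \ref{eq21}: since $L/L^2$ is abelian, $\mathcal{M}(L/L^2) = (L/L^2)\wedge (L/L^2)$, and under this identification the map $\zeta$ of \ref{eq1.3} becomes exactly the wedge-to-commutator map $\rho: V\wedge V\to W$, $v_1\wedge v_2\mapsto [v_1,v_2] = f_L(v_1,v_2)$, appearing in the definition of $\mathcal{M}(f_L)$. Therefore $\ker\zeta = \ker\rho$. Combining the two identifications with \ref{eq21} and with the definition $\mathcal{M}(f_L) = \frac{U\otimes W}{X_{f_L}}\oplus\ker\rho$ gives $\mathcal{M}(L) \cong \mathcal{M}(f_L)$, which is the claim.

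I expect the only genuine obstacle to be bookkeeping of the $\mathbb{Z}_2$-degree signs when passing between $L$ and $U_L = L/L^2$: one must check that the degrees of coset representatives agree with the degrees used in defining $X_{f_L}$, and that the three cyclic sign exponents $|z||y|, |x||z|, |y||x|$ in $\ker\beta$ line up with $|a||c|, |b||a|, |c||b|$ in $X_{f_L}$ after the substitution. This is routine but must be done carefully; everything else is a direct transcription of \ref{eq21}, the remark preceding it, and the definitions of $\mathcal{M}(f)$ and $X_f$.
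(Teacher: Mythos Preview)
Your proposal is correct and follows exactly the route the paper indicates: the paper's entire proof is the one-line remark ``By using \ref{eq21} and the Theorem \ref{theo28}'', and you have simply unpacked this by identifying $\ker\beta$ with $X_{f_L}$ via the explicit generators in Theorem~\ref{theo28} and $\ker\zeta$ with $\ker\rho$ via the remark preceding \ref{eq21}. Your write-up is in fact more detailed than the paper's own justification.
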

 
From the above proposition we can see that the multiplier of $L$ which is an object of  $\mathcal{SN}_2$ is equal to the multiplier of $f_L$ which is an object of $\mathcal{SSKE}$. Now we state the epicenter for an object of $\mathcal{SSKE}$, then we will relate it to $\mathcal{M}(f_L)$.

\begin{definition}\label{def44}
Let $f:V\times V\longrightarrow W$ be an object of $\mathcal{SSKE}$. We define the epicenter of $f$ as follows:
\[Z^*(f)=\{w=f(u,v) \in W~ \lvert ~(-1)^{|x||v|}x\otimes w \in X_f  ~ {\rm{for ~ all~homoenuos~elements ~}} x,u,v \in V \}.\]
\end{definition}

\begin{proposition}\label{pro45}
Let $L \in \mathcal{SN}_2$ and let $f_L$ be defined in \ref{eq11}. Then
\[Z^*(L)=Z^*(f_L).\]
\end{proposition}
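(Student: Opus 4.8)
The plan is to deduce Proposition~\ref{pro45} directly from Corollary~\ref{cor2.10} by unwinding the definitions through the functor $\mathcal{B}$. Recall that $f_L\colon U_L\times U_L\to W_L$ is the map $(x+L^2,y+L^2)\mapsto[x,y]$ with $U_L=L/L^2$ and $W_L=L^2$, so that $f_L(U_L,U_L)$ spans $L^2$. The key observation is that the subspace $X_{f_L}\subseteq U_L\otimes L^2$ built from the ``graded Jacobi'' generators
\[(-1)^{|a||c|}a\otimes f_L(b,c)+(-1)^{|b||a|}b\otimes f_L(c,a)+(-1)^{|c||b|}c\otimes f_L(a,b)\]
is literally the same subspace as $\mbox{ker}\beta$ described in Theorem~\ref{theo28}, once we identify $a=x+L^2$, $f_L(b,c)=[y,z]$, etc. This is because $\beta\colon L^2\otimes(L/L^2)\to\mathcal{M}(L)$ (equivalently $(L/L^2)\otimes L^2\to\mathcal{M}(L)$ after the obvious flip) sends $(x+L^2)\otimes[y,z]$ to $[\tilde x,[\tilde y,\tilde z]]+[R,F]$, and Theorem~\ref{theo28} identifies $\mbox{ker}\beta$ exactly with the span of the graded Jacobi combinations. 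Thus $X_{f_L}=\mbox{ker}\beta$ as subspaces of $(L/L^2)\otimes L^2$.

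Granting that identification, the proof is a two-line comparison of the two descriptions. By Definition~\ref{def44},
\[Z^*(f_L)=\{w=f_L(u,v)\in W_L\mid (-1)^{|x||v|}x\otimes w\in X_{f_L}\ \text{for all homogeneous } x\in U_L\},\]
while Corollary~\ref{cor2.10} states
\[Z^*(L)=\{z=[x,y]\in L^2\mid (-1)^{|r||y|}(r+L^2)\otimes z\in\mbox{ker}\beta\ \text{for all } r\in L\}.\]
Under $W_L=L^2$, $U_L=L/L^2$, $w=f_L(u,v)\leftrightarrow z=[u,v]$, $x\leftrightarrow r+L^2$ (with $|v|=|y|$), and $X_{f_L}=\mbox{ker}\beta$, these two sets coincide element for element. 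Hence $Z^*(f_L)=Z^*(L)$.

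So the actual work of the proof is confined to the first paragraph: verifying carefully that $X_{f_L}$ and $\mbox{ker}\beta$ really are the same subspace, including that the sign conventions in the two generating sets match and that passing between $L^2\otimes(L/L^2)$ and $(L/L^2)\otimes L^2$ via the graded flip does not disturb the identification of the relevant one-tensor elements $(-1)^{|x||v|}x\otimes w$. I would also note in passing that both $Z^*(f_L)$ and $Z^*(L)$ are genuinely subsets of $W_L=L^2$ (for $Z^*(L)$ this uses that in the non-abelian case $L/L^2$ is capable by Theorem~\ref{th4.22}, as already exploited in Lemma~\ref{lemm2.9}; the abelian case is trivial since then $L^2=0$ and both epicenters may be read off directly). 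I expect the only mild obstacle to be bookkeeping with the degree factors: one must check that ``$(-1)^{|a||c|}a\otimes f(b,c)+\cdots$'' with $f=f_L$ reproduces precisely the generator
\[(-1)^{|z||y|}((z+L^2)\otimes[x,y])+(-1)^{|x||z|}((x+L^2)\otimes[y,z])+(-1)^{|y||x|}((y+L^2)\otimes[z,x])\]
of Theorem~\ref{theo28} after relabelling, which is a direct substitution once the roles of $a,b,c$ are matched to $z,x,y$ appropriately. No deeper difficulty arises, so the proof will be short.

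\begin{proof}
If $L$ is abelian then $L^2=0$, so $W_{f_L}=0$ and both $Z^*(L)$ and $Z^*(f_L)$ are trivial; the equality holds. Assume $L$ is non-abelian. Recall $U_L=L/L^2$, $W_L=L^2$, and $f_L(x+L^2,y+L^2)=[x,y]$, so that $f_L(U_L,U_L)$ spans $W_L=L^2$. Comparing the generators of $X_{f_L}$ in Section~3 with the description of $\mbox{ker}\beta$ in Theorem~\ref{theo28}, and using $f_L(b,c)=[\,\cdot\,,\,\cdot\,]$, we see that under the identification of $U_L\otimes W_L$ with $(L/L^2)\otimes L^2$ the subspace $X_{f_L}$ coincides with $\mbox{ker}\beta$ for the map $\beta$ of \ref{eq1.3}: indeed, substituting $a=z+L^2$, $b=x+L^2$, $c=y+L^2$ into
\[(-1)^{|a||c|}a\otimes f_L(b,c)+(-1)^{|b||a|}b\otimes f_L(c,a)+(-1)^{|c||b|}c\otimes f_L(a,b)\]
yields exactly
\[(-1)^{|z||y|}((z+L^2)\otimes[x,y])+(-1)^{|x||z|}((x+L^2)\otimes[y,z])+(-1)^{|y||x|}((y+L^2)\otimes[z,x]),\]
which is the generic generator of $\mbox{ker}\beta$. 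Now by Definition~\ref{def44},
\[Z^*(f_L)=\{w=f_L(u,v)\in W_L\mid (-1)^{|x||v|}x\otimes w\in X_{f_L}\ \text{for all homogeneous}\ x\in U_L\},\]
and by Corollary~\ref{cor2.10},
\[Z^*(L)=\{z=[x,y]\in L^2\mid (-1)^{|r||y|}(r+L^2)\otimes z\in\mbox{ker}\beta\ \text{for all}\ r\in L\}.\]
Identifying $w=f_L(u,v)$ with $z=[u,v]$ (so $|v|$ equals the degree of the second argument in both), $x$ with $r+L^2$, and using $X_{f_L}=\mbox{ker}\beta$, the defining conditions of the two sets are identical. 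Hence $Z^*(L)=Z^*(f_L)$.
\end{proof}
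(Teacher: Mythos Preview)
Your proof is correct and follows essentially the same approach as the paper: invoke Corollary~\ref{cor2.10} for the description of $Z^*(L)$, observe via Theorem~\ref{theo28} that $X_{f_L}=\ker\beta$, and then compare with Definition~\ref{def44}. The paper's proof is simply a terser version of yours; your extra work (the explicit matching of generators and the separate abelian case) is harmless elaboration, though note that objects of $\mathcal{SN}_2$ are by definition of class exactly two, so the abelian case does not actually arise.
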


\begin{proof}
Corollary \ref{cor2.10} implies $Z^*(L) =\{z=[x,y] \in L^2|\, (-1)^{|r||y|}(r+L^2)\otimes z \in \mbox{ker}\beta \,\, {\rm{for ~ all~ }} r\in L\}$. It is clear from Theorem \ref{theo28} that $X_{f_L}=\mbox{ker}\beta$. Now the result follows from the definition of $Z^*(f)$.
\end{proof}

Now it is clear that to determine the capability of an object of $\mathcal{SN}_2$, it is enough to check that $Z^*(f_L)$ is trivial space.

\begin{corollary}
Let $L \in \mathcal{SN}_2$. Then $L$ is capable if and only if $Z^*(f_L)=0$.
\end{corollary}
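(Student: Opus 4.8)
The plan is to read this off immediately from Proposition~\ref{pro45}. First I would recall the definition of capability used in the paper: the epicenter $Z^{*}(L)$ is the smallest ideal of $L$ for which $L/Z^{*}(L)$ is capable, and in particular $L$ itself is capable precisely when this smallest ideal is trivial. Hence the assertion ``$L$ is capable'' is literally equivalent to ``$Z^{*}(L)=0$'', and nothing deeper than this unwinding is needed on the Lie-superalgebra side.

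Next I would invoke Proposition~\ref{pro45}, which identifies $Z^{*}(L)$ with $Z^{*}(f_L)$ as subspaces of $W_L=L^{2}$. Chaining the two equivalences gives that $L$ is capable if and only if $Z^{*}(f_L)=0$, which is exactly the claim. One can equally route the argument through the functor $\mathcal{B}$ of Theorem~\ref{th33}: since $\mathcal{B}$ is an equivalence of categories, capability of $L$ is detected by the vanishing of its epicenter, and Proposition~\ref{pro45} identifies that epicenter with $Z^{*}(f_L)$, so the corollary drops out.

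The only point I would take a sentence to address is the meaning of the right-hand side. In Definition~\ref{def44}, $Z^{*}(f_L)$ is introduced as a set of values $w=f_L(u,v)\in W_L$, so one should note that it is in fact a subsuperspace of $W_L=L^{2}$: under the identification of Proposition~\ref{pro45} it coincides with the graded ideal $Z^{*}(L)$ of $L$, so ``$Z^{*}(f_L)=0$'' unambiguously says that this subspace is trivial. Beyond this bookkeeping there is no genuine obstacle; the substantive work has already been done in Corollary~\ref{cor2.10} and Proposition~\ref{pro45}, and the present statement is a formal consequence.
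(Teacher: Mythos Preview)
Your proposal is correct and matches the paper's approach exactly: the paper states the corollary without proof, immediately after Proposition~\ref{pro45} and the remark ``Now it is clear that to determine the capability of an object of $\mathcal{SN}_2$, it is enough to check that $Z^*(f_L)$ is trivial space,'' so your spelling-out of the two-step chain (capability $\Leftrightarrow Z^*(L)=0$ combined with $Z^*(L)=Z^*(f_L)$) is precisely the intended argument. The detour through the functor $\mathcal{B}$ is unnecessary but harmless.
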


From the above corollary we can observe that a nilpotent Lie superalgebra of class two is capable if and only if $Z^*(f_L)=0$.

\begin{lemma}\label{lem4.7}
Let $f:V\times V\longrightarrow W$ be an object of $\mathcal{SSKE}$ and let $I$ be a subspace of $W$. Then $\bar{f}:V\times V\longrightarrow W/I$ given by $\bar{f}(x, y) =f(x, y) + I$ is an object of $\mathcal{SSKE}$ and $(Z^*(f) +I)/I\subseteq Z^*(\bar{f})$.
\end{lemma}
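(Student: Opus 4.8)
The plan is to verify the two assertions in turn, the first being essentially bookkeeping and the second being the substantive point. First I would check that $\bar f$ is a legitimate object of $\mathcal{SSKE}$: since $f$ is bilinear and $I$ is a subspace, the composite $\bar f = \pi \circ f$ with $\pi\colon W\to W/I$ the quotient map is bilinear, grading preserving (as $\pi$ is even and $f$ is grading preserving), and skew-supersymmetric, because $\bar f(x,y) = f(x,y)+I = -(-1)^{|x||y|}f(y,x)+I = -(-1)^{|x||y|}\bar f(y,x)$. Moreover $\bar f(V,V) = \pi(f(V,V)) = \pi(W) = W/I$, so the image of $\bar f$ spans $W/I$; if $W/I$ is trivial the statement is vacuous, so we may assume $W/I\neq 0$.

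Next I would record the relationship between $X_f$ and $X_{\bar f}$. The map $\pi$ induces an even linear surjection $\mathrm{id}_V\otimes\pi\colon V\otimes W \to V\otimes (W/I)$, and from the defining generators one sees immediately that $(\mathrm{id}_V\otimes\pi)(X_f) = X_{\bar f}$, since $(\mathrm{id}_V\otimes\pi)$ sends the generator $(-1)^{|a||c|}a\otimes f(b,c) + (-1)^{|b||a|}b\otimes f(c,a) + (-1)^{|c||b|}c\otimes f(a,b)$ to the corresponding generator of $X_{\bar f}$ built from $\bar f$. This is the key compatibility that makes the epicenters comparable.

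Now for the inclusion $(Z^*(f)+I)/I \subseteq Z^*(\bar f)$. Take $\bar w \in (Z^*(f)+I)/I$, so $\bar w = w + I$ for some $w\in Z^*(f)$; by Definition \ref{def44}, $w = f(u,v)$ for some $u,v\in V$ and $(-1)^{|x||v|}x\otimes w\in X_f$ for all homogeneous $x\in V$. Then $\bar w = \bar f(u,v)\in W/I$, and applying $\mathrm{id}_V\otimes\pi$ to $(-1)^{|x||v|}x\otimes w\in X_f$ gives $(-1)^{|x||v|}x\otimes \bar w = (-1)^{|x||v|}x\otimes(w+I)\in (\mathrm{id}_V\otimes\pi)(X_f) = X_{\bar f}$ for every homogeneous $x\in V$. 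Hence $\bar w$ satisfies the defining condition for membership in $Z^*(\bar f)$, namely it is of the form $\bar f(u,v)$ with $(-1)^{|x||v|}x\otimes\bar w\in X_{\bar f}$ for all homogeneous $x$, so $\bar w\in Z^*(\bar f)$.

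The only mild subtlety, and the place I would be careful, is the homogeneity/grading bookkeeping: one must check that all the elements $u,v,x$ can be taken homogeneous and that the signs $(-1)^{|x||v|}$ are unchanged under $\pi$ (they are, since $\pi$ is an even map and hence $|\bar w| = |w|$ for homogeneous $w$, though here the sign only involves $|v|$ and $|x|$, which do not change at all). There is no genuine obstacle — the statement is a direct consequence of the functoriality of the $X_f$ construction under even surjections $W\to W/I$ — but writing it cleanly requires keeping the definitions of $X_f$ and $Z^*(f)$ from Definition \ref{def44} in hand and invoking the identity $(\mathrm{id}_V\otimes\pi)(X_f)=X_{\bar f}$ explicitly.
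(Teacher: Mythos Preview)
Your proposal is correct and follows essentially the same approach as the paper: both arguments introduce the induced map $V\otimes W\to V\otimes(W/I)$ (your $\mathrm{id}_V\otimes\pi$, the paper's $\eta$), observe that it carries $X_f$ onto $X_{\bar f}$, and then push the defining condition for $w\in Z^*(f)$ through this map to conclude $w+I\in Z^*(\bar f)$. Your write-up is in fact a bit more careful about verifying that $\bar f\in\mathcal{SSKE}$ and about the grading bookkeeping, but the core idea is identical.
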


\begin{proof}
Consider the map $\bar{f} :V \times V \longrightarrow W/I $ defined by
                \[\bar{f}(x, y) = f(x, y) + I,\]
and	$\eta :V \otimes W \longrightarrow V \otimes (W/I) $ given by 
         \[ x \otimes y \longmapsto x \otimes (y + I),\]
and the restriction of $\eta$ to $X_f$ $\eta\lvert_{X_f}: X_f\longrightarrow V\otimes (W/I)$ defined by
\[(-1)^{|a||c|}a \otimes f(b, c) +(-1)^{|b||a|}b \otimes f(c, a) +(-1)^{|c||b|}c \otimes f(a, b)\longmapsto\]
\[(-1)^{|a||c|}a \otimes (f(b, c)+I) +(-1)^{|b||a|}b \otimes (f(c, a)+I) +(-1)^{|c||b|}c \otimes (f(a, b)+I).\]
One can observe that $\bar{f}\in \mathcal{SSKE}$, $\eta$ is a homomorphism, and $\mbox{im}(\eta\lvert_{X_f}) =X_{\bar{f}}$. As $w=f(u,v)\in Z^*(f)$, then $(-1)^{|x||v|}x\otimes w\in X_f$ for all $x \in V$. Now $\eta\lvert_{X_f}((-1)^{|x||v|}x\otimes w) =(-1)^{|x||v|}x\otimes (w+I) \in X_{\bar{f}}$ for all $x \in V$. Therefore $w+I\in Z^*(\bar{f})$.
\end{proof}
\smallskip


\begin{corollary}
Let $L \in \mathcal{SN}_2$ and let $H$ be a graded central ideal of $L$ such that $H \subseteq L^2$. Then
	 \[ (Z^*(L) +H)/H\subseteq Z^*(L/H) \].
\end{corollary}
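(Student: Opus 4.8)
The plan is to deduce the corollary from Lemma~\ref{lem4.7} together with Proposition~\ref{pro45}, using the functor $\mathcal{B}$ to pass between the two categories. First I would record that $L/H$ again belongs to $\mathcal{SN}_2$: since $L$ has class two and $H\subseteq L^2$ is central, $(L/H)^2=(L^2+H)/H=L^2/H$ and $[(L/H)^2,L/H]=([L^2,L]+H)/H=0$, so $L/H$ is nilpotent of class at most two. The third isomorphism theorem then identifies $U_{L/H}=(L/H)\big/(L/H)^2$ with $L/L^2=U_L$, while $W_{L/H}=(L/H)^2=L^2/H=W_L/H$, where $W_L=L^2$.

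Under these identifications, the map $f_{L/H}$ of \ref{eq11} is precisely the map $\overline{f_L}$ obtained from $f_L$ by post-composition with the projection $W_L=L^2\twoheadrightarrow L^2/H$; that is, $\overline{f_L}$ in the notation of Lemma~\ref{lem4.7} with $I=H\subseteq W_L$. Indeed, for $x,y\in L$,
\[
f_{L/H}(x+L^2,\,y+L^2)=[x,y]+H=f_L(x+L^2,\,y+L^2)+H=\overline{f_L}(x+L^2,\,y+L^2),
\]
and the $\mathbb{Z}_2$-gradings of source and target match because passing to the quotient by a graded ideal respects the grading. With this in hand, Lemma~\ref{lem4.7} applied to $f=f_L$ and $I=H$ gives that $\overline{f_L}=f_{L/H}$ is an object of $\mathcal{SSKE}$ and that $(Z^*(f_L)+H)/H\subseteq Z^*(f_{L/H})$. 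Now Proposition~\ref{pro45}, applied to $L$ and to $L/H$, both in $\mathcal{SN}_2$, yields $Z^*(f_L)=Z^*(L)$ and $Z^*(f_{L/H})=Z^*(L/H)$, and substituting these equalities into the previous inclusion produces exactly $(Z^*(L)+H)/H\subseteq Z^*(L/H)$.

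The only place one must take care is the degenerate situation in which $W_L/H$ is trivial, i.e.\ $H=L^2$ (or $L$ abelian, forcing $H=0$): then $L/H$ is abelian, $f_{L/H}$ is not an object of $\mathcal{SSKE}$ in the sense required by Lemma~\ref{lem4.7}, and Proposition~\ref{pro45} does not apply literally. But in this case the claimed inclusion reads $(Z^*(L)+H)/H=H/H=0\subseteq Z^*(L/H)$, which holds trivially, so this case is disposed of separately. When $H\subsetneq L^2$ we have $W_L/H\neq 0$ and the argument above applies verbatim. I expect the only (minor) obstacle to be the routine bookkeeping confirming that $f_{L/H}$ genuinely coincides with the induced map $\overline{f_L}$ of Lemma~\ref{lem4.7} under the canonical identifications $U_{L/H}\cong L/L^2$ and $W_{L/H}\cong L^2/H$, and that all the superspace gradings are compatible.
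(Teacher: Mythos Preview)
Your proof is correct and follows exactly the route the paper takes: the paper's proof consists of the single sentence ``The proof follows from Proposition~\ref{pro45} and Lemma~\ref{lem4.7},'' and you have supplied precisely the details needed to make that citation work, namely the identification of $f_{L/H}$ with the induced map $\overline{f_L}$ of Lemma~\ref{lem4.7} via the isomorphisms $U_{L/H}\cong L/L^2$ and $W_{L/H}\cong L^2/H$. Your separate treatment of the degenerate case $H=L^2$ (where $L/H$ is abelian and hence lies outside $\mathcal{SN}_2$) is a point the paper does not address explicitly but which your argument handles cleanly.
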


\begin{proof}
The proof follows from Proposition \ref{pro45} and Lemma \ref{lem4.7}.
\end{proof}


\begin{theorem}\label{theo49}
Let $f:V\times V\longrightarrow W$ be an object of rank $(\binom m2+\binom {n+1}{2}\mid mn)$  and dimension $(m \mid n)$ of $\mathcal{SSKE}$. Then $L_f$ is an $(\binom {m+1}2+ \binom {n+1}{2}\mid mn+n)$-dimensional generalized Heisenberg Lie superalgebra of rank $(\binom m2+\binom {n+1}{2}\mid mn)$, where $L_f$ is the Lie superalgebra associated to $f$.
\end{theorem}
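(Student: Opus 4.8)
The plan is to check directly, from the construction of $L_f=V\oplus W$ in \ref{eq333} with bracket $[((x,y),(w,l)),((x',y'),(w',l'))]=(0,f((x,y),(x',y')))$, the two defining conditions of a generalized Heisenberg Lie superalgebra, together with the dimension count. First I would record the two immediate facts. Since $[L_f,L_f]$ is spanned by the brackets, which are exactly the values $f(v,v')$ with $v,v'\in V$, and these span $W$ by the definition of $\mathcal{SSKE}$, we have $L_f^2=\{0\}\oplus W$; hence $\dim L_f^2=\dim W=(\binom{m}{2}+\binom{n+1}{2}\mid mn)$, the rank of $f$. The bracket formula also gives at once $W\subseteq Z(L_f)$. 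Finally, $\dim L_f=\dim V+\dim W=(m\mid n)+(\binom{m}{2}+\binom{n+1}{2}\mid mn)=(\binom{m+1}{2}+\binom{n+1}{2}\mid mn+n)$, using $m+\binom{m}{2}=\binom{m+1}{2}$. (That $L_f$ is a nilpotent Lie superalgebra of class two was already noted before \ref{eq333}.) Thus the entire statement reduces to the reverse inclusion $Z(L_f)\subseteq W$.

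This is the heart of the argument. A homogeneous element $v+w$ with $v\in V$, $w\in W$ is central in $L_f$ if and only if $f(v,v')=0$ for all $v'\in V$, i.e.\ $v$ belongs to the radical $\mathrm{rad}(f)=\{v\in V:f(v,V)=0\}$; since $f$ is grading preserving, $\mathrm{rad}(f)$ is a graded subspace, so it suffices to show its homogeneous elements vanish. Here the rank hypothesis is used decisively. Fix homogeneous bases $\{u_1,\dots,u_m\}$ of $V_{\bar 0}$ and $\{v_1,\dots,v_n\}$ of $V_{\bar 1}$. By graded skew-supersymmetry (and $\mathrm{char}\,\mathbb{F}\neq 2$, so $f(u_i,u_i)=0$), $W_{\bar 0}$ is spanned by $\{f(u_i,u_j):i<j\}\cup\{f(v_k,v_l):k\le l\}$ and $W_{\bar 1}$ by $\{f(u_i,v_j):1\le i\le m,\ 1\le j\le n\}$; these spanning sets have $\binom{m}{2}+\binom{n+1}{2}$ and $mn$ elements respectively. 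Since $\dim W_{\bar 0}=\binom{m}{2}+\binom{n+1}{2}$ and $\dim W_{\bar 1}=mn$ by hypothesis --- and this is in fact the largest rank a skew-supersymmetric bilinear map on an $(m\mid n)$-dimensional superspace can have --- each spanning set is actually a basis, so in particular the families $\{f(u_i,u_j):i<j\}$, $\{f(v_k,v_l):k\le l\}$ and $\{f(u_i,v_j)\}$ are linearly independent.

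The vanishing of $\mathrm{rad}(f)$ is then a short computation with these linear independences. If $v=\sum_i a_iu_i$ is an even element of $\mathrm{rad}(f)$ and $n\ge 1$, the relation $0=f(v,v_j)=\sum_i a_i f(u_i,v_j)$ for a fixed $j$ forces $a_i=0$ for all $i$; if $n=0$ (so $m\ge 2$, since $W\ne 0$), then for each $k$ the relation $0=f(v,u_k)=\sum_{i\ne k}a_i f(u_i,u_k)$ forces $a_i=0$ for all $i\ne k$, and two choices of $k$ give $v=0$. Dually, if $v=\sum_j b_jv_j$ is an odd element of $\mathrm{rad}(f)$ and $m\ge 1$, then $0=f(v,u_i)=-\sum_j b_jf(u_i,v_j)$ for a fixed $i$ forces $b_j=0$ for all $j$; the case $m=0$ is handled the same way using $0=f(v,v_l)=\sum_j b_j f(v_l,v_j)$ and the independence of $\{f(v_l,v_j):j\}$. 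In every case $v=0$, so $\mathrm{rad}(f)=0$, hence $Z(L_f)=W=L_f^2$; combined with the dimension computed in the first paragraph, this shows that $L_f$ is a generalized Heisenberg Lie superalgebra of rank $(\binom{m}{2}+\binom{n+1}{2}\mid mn)$ and dimension $(\binom{m+1}{2}+\binom{n+1}{2}\mid mn+n)$. I expect the only genuinely delicate point to be translating the numerical rank hypothesis into the linear-independence statement (and keeping track of the degenerate cases $m=0$ or $n=0$); once that is done, everything else is routine bookkeeping.
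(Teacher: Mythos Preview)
Your proof is correct and follows essentially the same approach as the paper: both fix a homogeneous basis of $V$, use the rank hypothesis to conclude that the elements $f(u_i,u_r)$, $f(v_j,v_s)$, $f(u_i,v_j)$ form a basis of $W$, and read off the structure of $L_f$ from there. Your version is more explicit than the paper's in verifying the center condition $Z(L_f)=W$ (equivalently $\mathrm{rad}(f)=0$) and in handling the degenerate cases $m=0$ or $n=0$, which the paper leaves implicit in the presentation it writes down.
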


\begin{proof}
Let $\{u_1, \ldots, u_m;v_1, \ldots,v_n\}$ be a basis for $V=V_{\bar{0}}\oplus V_{\bar{1}}$ where $|u_i| =0$ for all $1\leq i\leq m$ ,  $|v_j|=1$ for all $1\leq j\leq n$. Let $x_{ir}:=f(u_i, u_r)\in W_{\bar{0}}$, $y_{js}:=f(v_j, v_s)\in W_{\bar{0}}$, and $z_{ij}:=f(u_i, v_j)\in W_{\bar{1}}$,  $1\leq j\leq s\leq n$, $1\leq i<r\leq m$. Then 
\[ W=< x_{ir},y_{js},z_{ij}~ \lvert ~ 1\leq j\leq s\leq n, ~ 1\leq i<r\leq m  >.\]
Since $\dim W=(\binom m2+\binom {n+1}{2}\mid mn)$, thus $\{ x_{ir},y_{js},z_{ij}~\lvert ~1\leq j\leq s\leq n,~ 1\leq i<r\leq m \}$ is a basis of $W$. Now
\[ L_f\cong   \left\langle  u_i,v_j,  x_{ir},y_{js},z_{ij} ~\lvert ~[u_i,u_r]=x_{ir},~ [v_j,v_s]=y_{js},~ [u_i,v_j]=z_{ij},~1\leq j\leq s\leq n,~ 1\leq i<r\leq m \right\rangle.  \]
Thus $L_f$ is an $(\binom {m+1}2+ \binom {n+1}{2}\mid mn+n)$-dimensional generalized Heisenberg Lie superalgebra of rank $(\binom m2+\binom {n+1}{2}\mid mn)$.
\end{proof}

Now we have the following results for the even and odd objects of $\mathcal{SSKE}$.\\

\begin{corollary}
Let $f:V\times V\longrightarrow W$ be an even object of rank $(\binom m2+\binom {n+1}{2}\mid 0)$  and dimension $(m \mid n)$ of $\mathcal{SSKE}$. Then $L_f$ is an $(\binom {m+1}2+ \binom {n+1}{2}\mid n)$-dimensional generalized Heisenberg Lie superalgebra of rank $(\binom m2+\binom {n+1}{2}\mid 0)$. 
\end{corollary}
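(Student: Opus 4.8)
The plan is to rerun the computation from the proof of Theorem~\ref{theo49}, now using that $f$ is even to kill all the ``mixed'' values $f(u_i,v_j)$, so that the odd part of $W$ disappears and the odd dimension of $L_f$ drops from $mn+n$ down to $n$.

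First I would fix a homogeneous basis $\{u_1,\dots,u_m;v_1,\dots,v_n\}$ of $V$ with the $u_i$ even and the $v_j$ odd, and set $x_{ir}:=f(u_i,u_r)$ for $1\le i<r\le m$ and $y_{js}:=f(v_j,v_s)$ for $1\le j\le s\le n$. By skew-supersymmetry every value of $f$ on a pair of basis vectors is, up to sign, one of the $x_{ir}$, one of the $y_{js}$, or some $f(u_i,v_j)$; since $f$ is even, $f(V_{\bar 0},V_{\bar 1})=0$, hence $f(u_i,v_j)=0$ for all $i,j$. Thus the image of $f$, which by hypothesis spans $W$, lies in $W_{\bar 0}$ and is spanned by $\{x_{ir},y_{js}\}$. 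As the rank of $f$ is $(\binom m2+\binom{n+1}2\mid 0)$ and this spanning set has exactly $\binom m2+\binom{n+1}2$ members, it must be a basis of $W$, so $W=W_{\bar 0}$ has superdimension $(\binom m2+\binom{n+1}2\mid 0)$. Reading off the presentation \ref{eq333} with the relations $[u_i,v_j]=f(u_i,v_j)=0$, I obtain
\[L_f\cong\big\langle\, u_i,\,v_j,\,x_{ir},\,y_{js}\ \big|\ [u_i,u_r]=x_{ir},\ [v_j,v_s]=y_{js},\ [u_i,v_j]=0,\ x_{ir},y_{js}\in Z(L_f)\,\big\rangle,\]
all other brackets vanishing, whence $L_f^2=W=Z(L_f)$. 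Therefore $L_f$ is a generalized Heisenberg Lie superalgebra of rank $\dim L_f^2=(\binom m2+\binom{n+1}2\mid 0)$, and
\[\dim L_f=\dim V+\dim W=(m\mid n)+\big(\binom m2+\binom{n+1}2\mid 0\big)=\big(\binom{m+1}2+\binom{n+1}2\mid n\big),\]
using $m+\binom m2=\binom{m+1}2$.

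I do not expect a real obstacle. The only points that need care are that the $x_{ir},y_{js}$ are linearly independent and not merely a spanning set --- which is exactly the role of the rank hypothesis --- and the elementary identity $m+\binom m2=\binom{m+1}2$ governing the even superdimension. It is worth emphasizing that this corollary is the genuine ``even restriction'' of Theorem~\ref{theo49} rather than a formal specialization of it: putting the odd rank $mn$ there equal to $0$ would force $m=0$ or $n=0$, whereas here one instead simply discards the generators $z_{ij}=f(u_i,v_j)$ from the outset.
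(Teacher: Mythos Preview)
Your proof is correct and takes essentially the same approach as the paper: the paper's own proof is the single line ``Since $f$ is even, so $z_{ij}=f(u_i, v_j)=0$ in the Theorem~\ref{theo49},'' and you have simply rerun the construction of Theorem~\ref{theo49} in full with the mixed generators $z_{ij}$ suppressed and the dimension count made explicit. Your closing remark that this is not a formal specialization of Theorem~\ref{theo49} (since setting the odd rank $mn$ to $0$ would force $m=0$ or $n=0$) is a helpful clarification that the paper leaves implicit.
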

\begin{proof}
Since $f$ is even, so $z_{ij}=f(u_i, v_j)=0$ in the Theorem \ref{theo49}.
\end{proof}

\begin{corollary}
Let $f:V\times V\longrightarrow W$ be an odd object of $\mathcal{SSKE}$ of rank $(0\mid mn)$ and dimension $(m \mid n)$. Then $L_f$ is an $(m\mid mn+n)$-dimensional generalized Heisenberg Lie superalgebra of rank $(0\mid  mn)$. 
\end{corollary}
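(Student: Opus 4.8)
The plan is to obtain this as a direct specialization of Theorem \ref{theo49} to the odd case. First I would fix a homogeneous basis $\{u_1,\dots,u_m;v_1,\dots,v_n\}$ of $V$ with $|u_i|=\bar 0$ and $|v_j|=\bar 1$, exactly as in the proof of Theorem \ref{theo49}, and set $x_{ir}=f(u_i,u_r)$, $y_{js}=f(v_j,v_s)$, $z_{ij}=f(u_i,v_j)$. Since $f$ is odd, by definition $f(V_{\bar 0},V_{\bar 0})=f(V_{\bar 1},V_{\bar 1})=0$, so every $x_{ir}$ and every $y_{js}$ vanishes and the image of $f$ is spanned by the $z_{ij}$ alone. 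Because $f$ is grading preserving, each $z_{ij}=f(u_i,v_j)$ lies in $W_{\bar 1}$, hence $W=W_{\bar 1}$; and the rank hypothesis $(0\mid mn)$ forces $\{z_{ij}\mid 1\le i\le m,\ 1\le j\le n\}$ to be a basis of $W$ (so the $z_{ij}$ are linearly independent, not merely spanning).

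Next I would read off the structure of $L_f=V\oplus W$ from the presentation in \ref{eq333}: the only surviving relations are $[u_i,v_j]=z_{ij}$, while $[u_i,u_r]=0$, $[v_j,v_s]=0$, and $W\subseteq Z(L_f)$. Counting dimensions, $(L_f)_{\bar 0}=V_{\bar 0}$ has dimension $m$, and $(L_f)_{\bar 1}=V_{\bar 1}\oplus W_{\bar 1}$ has dimension $n+mn$, so $\dim L_f=(m\mid mn+n)$. Then $L_f^2$ is spanned by the brackets $[u_i,v_j]=z_{ij}$, i.e. $L_f^2=W$, so $\dim L_f^2=(0\mid mn)$; and since $W$ is central by construction while $L_f/W$ is abelian, we get $Z(L_f)=W=L_f^2$. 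Therefore $L_f$ is an $(m\mid mn+n)$-dimensional generalized Heisenberg Lie superalgebra of rank $(0\mid mn)$, as claimed.

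This argument is essentially bookkeeping, so there is no substantial obstacle. The only points that need a moment of care are checking that oddness together with grading preservation places every $z_{ij}$ in degree $\bar 1$ (so that $L_f^2$ sits entirely in the odd part and the even rank is genuinely $0$), and using the rank hypothesis to be sure the $z_{ij}$ form a basis of $W$ rather than just a spanning set; both are immediate. One could also bypass Theorem \ref{theo49} and verify the generalized Heisenberg conditions $Z(L_f)=L_f^2$ and $\dim L_f^2=(0\mid mn)$ directly from the defining bracket, but quoting the theorem is cleaner.
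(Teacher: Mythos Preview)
Your proposal is correct and takes exactly the paper's approach: the paper's proof is the single line ``since $f$ is odd, $x_{ir}=f(u_i,u_r)=0$, $y_{js}=f(v_j,v_s)=0$, and the proof follows from Theorem~\ref{theo49},'' and you have simply unpacked that appeal in more detail. One small imprecision worth tightening: the clause ``$W$ is central while $L_f/W$ is abelian'' yields only $W\subseteq Z(L_f)$ and $L_f^2\subseteq W$, not the reverse inclusion $Z(L_f)\subseteq W$; that step uses the linear independence of the $z_{ij}$ you already established (so any nonzero $v\in V$ brackets nontrivially with some $u_i$ or $v_j$), but the paper's proof of Theorem~\ref{theo49} is equally brief on this point.
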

\begin{proof}
Since $f$ is odd, $x_{ir}=f(u_i, u_r)=0,~ y_{js}=f(v_j, v_s)=0$, and the proof follows from Theorem  \ref{theo49}.
\end{proof}


\begin{theorem}\label{the3.13}
Let $f:V\times V\longrightarrow W$ be a decomposable object of $\mathcal{SSKE}$ with the notations used in Definition \ref{def3222}. Then the following properties hold:
\begin{enumerate}
		\item $W=\overline{W_1}+\overline{W_2}$.
		\item $X_f=X_{\bar{f_1}}+X_{\bar{f_2}}+\overline{V_1}\otimes \overline{W_2}+\overline{V_2}\otimes \overline{W_1}.$
		\item
		 Assume that $\dim W=2$, that $W_1=W=\overline{W_2}\oplus \overline{U}$, and that $\tilde{f}_1:\overline{V_1}\times \overline{V_1}\longrightarrow \overline{U}$ is a decomposable object in $\mathcal{SSKE}$. If $f$ is nondegenerate, then 
		\[X_f = X_{\tilde{f}_1} + V \otimes \overline{W_2} \oplus \overline{V_2} \otimes \overline{U}.\] 
\end{enumerate}
\end{theorem}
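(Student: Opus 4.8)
The plan is to prove the three parts in order, with part~(2) carrying the content and part~(3) following from it by substitution. Part~(1) is immediate: for $v=v_1+v_2$ and $v'=v_1'+v_2'$ with $v_i,v_i'\in\overline{V_i}$, decomposability gives $f(v_1,v_2')=0$, and skew-supersymmetry forces $f(v_2,v_1')=0$ as well, so $f(v,v')=f(v_1,v_1')+f(v_2,v_2')\in\overline{W_1}+\overline{W_2}$. Since the image of $f$ spans $W$, this yields $W=\overline{W_1}+\overline{W_2}$.

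For part~(2) I would handle the two inclusions separately. For $\supseteq$: every defining generator of $X_{\overline{f_1}}$ (resp. $X_{\overline{f_2}}$), being built from $a,b,c\in\overline{V_1}$ (resp. $\overline{V_2}$), is literally a defining generator of $X_f$ under the inclusion $\overline{V_i}\otimes\overline{W_i}\hookrightarrow V\otimes W$, so $X_{\overline{f_1}}+X_{\overline{f_2}}\subseteq X_f$; and taking in the $X_f$-generator a triple $(a,b,c)$ with $a\in\overline{V_1}$ and $b,c\in\overline{V_2}$ kills the terms $f(c,a)$ and $f(a,b)$ and leaves a unit scalar times $a\otimes f(b,c)$, so, letting $a$ and $f(b,c)$ vary, $\overline{V_1}\otimes\overline{W_2}\subseteq X_f$, and symmetrically $\overline{V_2}\otimes\overline{W_1}\subseteq X_f$. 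For $\subseteq$: take a defining generator $g$ of $X_f$ with $a,b,c$ homogeneous and decompose $a=a_1+a_2$, $b=b_1+b_2$, $c=c_1+c_2$ along $V=\overline{V_1}\oplus\overline{V_2}$; each component is homogeneous of the same parity as its parent because the splitting of $V$ is a splitting of superspaces, so the Koszul signs are unchanged and the generator, being linear in each slot once the parities are fixed, expands as $g=\sum_{i,j,k\in\{1,2\}}g_{ijk}$. Then $g_{111}\in X_{\overline{f_1}}$ and $g_{222}\in X_{\overline{f_2}}$, while in each of the six mixed cases exactly two of the three values $f(b,c),f(c,a),f(a,b)$ vanish (those pairing a $\overline{V_1}$-vector with a $\overline{V_2}$-vector), so $g_{ijk}$ collapses to a single tensor lying in $\overline{V_2}\otimes\overline{W_1}$ when two of $i,j,k$ equal $1$ and in $\overline{V_1}\otimes\overline{W_2}$ when two equal $2$. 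Summing places $g$ in the asserted subspace. The only obstacle here is the sign and case bookkeeping; the point that needs to be stated with care is that the multilinear expansion is legitimate precisely because $V$ splits as a superspace.

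For part~(3) I would substitute $\overline{W_1}=W=\overline{W_2}\oplus\overline{U}$ into the identity of part~(2). Then $X_{\overline{f_2}}\subseteq\overline{V_2}\otimes\overline{W_2}$ and $\overline{V_1}\otimes\overline{W_2}$ are absorbed into $V\otimes\overline{W_2}=\overline{V_1}\otimes\overline{W_2}+\overline{V_2}\otimes\overline{W_2}$, while $\overline{V_2}\otimes\overline{W_1}=\overline{V_2}\otimes\overline{W_2}+\overline{V_2}\otimes\overline{U}$ supplies the last summand. It remains to trade $X_{\overline{f_1}}$ for $X_{\tilde{f}_1}$ modulo $V\otimes\overline{W_2}$: with $\pi\colon W=\overline{W_2}\oplus\overline{U}\to\overline{U}$ the projection one has $\tilde{f}_1=\pi\circ\overline{f_1}$, so a generator of $X_{\overline{f_1}}$ and the matching generator of $X_{\tilde{f}_1}$ differ by an element of $\overline{V_1}\otimes\overline{W_2}$, giving $X_{\overline{f_1}}+V\otimes\overline{W_2}=X_{\tilde{f}_1}+V\otimes\overline{W_2}$. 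The directness recorded by the ``$\oplus$'' is then automatic, since $X_{\tilde{f}_1}\subseteq\overline{V_1}\otimes\overline{U}$, the subspaces $\overline{V_1}\otimes\overline{U}$ and $\overline{V_2}\otimes\overline{U}$ are complementary in $V\otimes\overline{U}$, and $V\otimes\overline{U}$ is complementary to $V\otimes\overline{W_2}$ in $V\otimes W$. I would invoke the hypotheses of part~(3) — $f$ nondegenerate, $\tilde{f}_1$ decomposable, $\dim W=2$ — to ensure that $\overline{W_2}$ and $\overline{U}$ are one-dimensional and that $\tilde{f}_1$ is genuinely an object of $\mathcal{SSKE}$, so that $X_{\tilde{f}_1}$ and the remaining pieces really are the objects named in the statement; this is the step I would scrutinize most closely, since those hypotheses do not otherwise intervene in the set-theoretic identity.
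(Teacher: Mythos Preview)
Your proof is correct and follows the same route as the paper's: part~(1) by bilinearity, part~(2) by inclusion in both directions using mixed triples to generate the cross terms, and part~(3) by substituting part~(2) and splitting $\overline f_1$ along $W=\overline{W_2}\oplus\overline U$. Your treatment of the $\subseteq$ direction in part~(2) is in fact more careful than the paper's---you decompose $a,b,c$ along $V=\overline{V_1}\oplus\overline{V_2}$ and track parities, whereas the paper passes directly to a case analysis that tacitly assumes each of $a,b,c$ already lies in one summand---and your suspicion about part~(3) is well founded: the paper's argument, like yours, does not actually invoke nondegeneracy of $f$ or decomposability of $\tilde f_1$ beyond using $\dim W=2$ to fix one-dimensional pieces.
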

\begin{proof}
(1) Let $V=\overline{V_1}\oplus \overline{V_2}$. Then $\overline{W_1}+\overline{W_2}\subseteq W $. Now, let $u, v\in V$, then $u =u_1+u_2$, $v=v_1+v_2$ where $u_1,~ v_1\in \overline{V_1} $ and $u_2,~ v_2\in \overline{V_2}$.Then $f(u, v) =f(u_1, v_1) +f(u_2, v_2) =\bar{f_1}(u_1, v_1) +\bar{f_2}(u_2, v_2)$, and thus $W \subseteq \overline{W_1}+\overline{W_2}$. Therefore, $W = \overline{W_1}+\overline{W_2}$.	\\
(2) Observe that $X_{\bar{f_1}}+X_{\bar{f_2}}\subseteq X_f$. At first, we prove
$X_{\bar{f_1}}+X_{\bar{f_2}} + \overline{V_1}\otimes\overline{W_2}+\overline{V_2}\otimes\overline{W_1}\subseteq X_f$, for which we only need to prove that $\overline{V_1}\otimes\overline{W_2}+\overline{V_2}\otimes\overline{W_1}\subseteq X_f$. Let $u_1,~ u_2\in \overline{V_1}$ and $x, ~y\in \overline{V_2} $. Since $f$ is decomposable, we have

\begin{align*}
(-1)^{|u_1||y|}u_1 \otimes f(x, y) +(-1)^{|x||u_1|}x \otimes f(y, u_1) +(-1)^{|y||x|}y\otimes f(u_1, x) &=(-1)^{|u_1||y|}u_1 \otimes f(x, y)\\
&=(-1)^{|u_1||y|}u_1 \otimes \bar{f_2}(x, y)
\end{align*}
and 
\begin{align*}
(-1)^{|x||u_2|} x \otimes f(u_1, u_2) +(-1)^{|u_1||x|}u_1 \otimes f(u_2, x) +(-1)^{|u_2||u_1|}u_2\otimes f(x, u_1) &=(-1)^{|x||u_2|}x \otimes f(u_1, u_2)\\
&=(-1)^{|x||u_2|}x \otimes \bar{f_1}(u_1, u_2).
\end{align*}
Thus $X_{\bar{f_1}}+X_{\bar{f_2}}+\overline{V_1}\otimes\overline{W_2}+\overline{V_2}\otimes\overline{W_1}\subseteq X_f.$ For the other containment, let $z=(-1)^{|a||c|}a \otimes f(b, c) +(-1)^{|b||a|}b \otimes f(c, a) +(-1)^{|c||b|}c \otimes f(a, b)\in  X_f$, where $a, b, c$ are homogeneous elements in $V$. Since  $f(\overline{V_1},\overline{V_2})=0$, we have the following cases:\\
	(i) If $a,~b,~c$ are homogeneous elements in $\overline{V_1}$, then $z \in X_{\bar{f_1}}$.\\
	(ii) If $a,~b,~c$ are homogeneous elements in $\overline{V_2}$ then $z \in X_{\bar{f_2}}.$\\
	(iii) If one of $a,b,c$ say $a$ is homogeneous element of $\overline{V_1}$ and $b,c$ are homogeneous elements of $\overline{V_2}$ or $a$ is homogeneous element of $\overline{V_2}$ and $b,c$ are homogeneous elements of $\overline{V_1}$. Then 
	\[z=(-1)^{|a||c|}a \otimes f(b, c)=(-1)^{|a||c|}a \otimes \bar{f_2}(b, c) \in \overline{V_1}\otimes\overline{W_2},\]
	or  
	\[z=(-1)^{|a||c|}a \otimes f(b, c)=(-1)^{|a||c|}a \otimes \bar{f_1}(b, c) \in \overline{V_2}\otimes\overline{W_1}.\]
From (i), (ii) and (iii), we have $z \in X_{\bar{f_1}}+X_{\bar{f_2}}+\overline{V_1}\otimes \overline{W_2}+\overline{V_2}\otimes \overline{W_1} $.
	Therefore 
$X_f\subseteq X_{\bar{f_1}}+X_{\bar{f_2}}+\overline{V_1}\otimes \overline{W_2}+\overline{V_2}\otimes \overline{W_1}$, and so 
\[X_f= X_{\bar{f_1}}+X_{\bar{f_2}}+\overline{V_1}\otimes \overline{W_2}+\overline{V_2}\otimes \overline{W_1}.\]

(3) Suppose that $W= \left\langle w_1 \right\rangle \oplus \left\langle w_2 \right\rangle$, $\overline{W_2}=\left\langle w_2 \right\rangle$, and that $\overline{U}=\left\langle w_1 \right\rangle$. Let $g_i$ and $h$ be graded alternating bilinear forms on $\overline{V_1}$ and $\overline{V_2}$ for each $i =1, 2$, respectively. Then $\bar{f}_1=g_1w_1+g_2w_2$ and $\bar{f}_2=h w_2$. Put $\tilde{f}_1=g_1w_1$. Now, we are about to show that $X_f = X_{\tilde{f}_1} + V \otimes \overline{W_2} \oplus \overline{V_2} \otimes \overline{U}$. Using part (ii) and the fact $X_{\bar{f}_2}\subseteq \overline{V_2}\otimes \overline{W_2}\subseteq\overline{V_2}\otimes \overline{W_1}$ , we obtain that
	\begin{align*}
	X_f= 
	& X_{\bar{f_1}}+X_{\bar{f_2}}+\overline{V_1}\otimes \overline{W_2}+\overline{V_2}\otimes \overline{W_1}\\
	&=X_{\bar{f_1}}+\overline{V_1}\otimes \overline{W_2}+\overline{V_2}\otimes \overline{W}\\
	&=X_{\bar{f_1}}+\overline{V_1}\otimes \overline{W_2}+\overline{V_2}\otimes (\overline{W_2}\oplus \overline{U})\\
	&=X_{\bar{f_1}}+V\otimes \overline{W_2}+\overline{V_2}\otimes\overline{U}.\\
\end{align*}
We claim that $X_{\bar{f}_1}\subseteq X_{\tilde{f}_1}
+V\otimes \overline{W_2}$. For $x, y, z \in \overline{V_1}$, since
 $\bar{f}_1=g_1w_1+g_2w_2$, we have
\begin{align*}
	&(-1)^{|x||z|}x \otimes \bar{f_1}(y, z)+(-1)^{|y||x|}y \otimes \bar{f_1}(z, x)+(-1)^{|z||y|}z \otimes \bar{f_1}(x, y) \\
	&=(-1)^{|x||z|}x \otimes (g_1(y, z)w_1+g_2(y, z)w_2)+(-1)^{|y||x|}y \otimes (g_1(z, x)w_1+g_2(z, x)w_2)\\
	&+(-1)^{|z||y|}z \otimes (g_1(x, y)w_1+g_2(x, y)w_2)\\
	&=(-1)^{|x||z|}x \otimes \tilde{f_1}(y, z)+(-1)^{|y||x|}y \otimes \tilde{f_1}(z, x)+(-1)^{|z||y|}z \otimes \tilde{f_1}(x, y)+(-1)^{|x||z|}x \otimes g_2(y, z)w_2\\
	&+(-1)^{|y||x|}y \otimes g_2(z, x)w_2+(-1)^{|z||y|}z \otimes g_2(x, y)w_2 \in X_{\tilde{f}_1}
	+\overline{V_1}\otimes \overline{W_2}\subseteq X_{\tilde{f}_1}
	+V\otimes \overline{W_2}.
\end{align*}
Therefore $X_{\bar{f}_1}\subseteq X_{\tilde{f}_1}
+V\otimes \overline{W_2}$, then we have 
\[ X_f =X_{\bar{f_1}}+V\otimes \overline{W_2}+\overline{V_2}\otimes\overline{U}\subseteq X_{\tilde{f}_1}+V\otimes \overline{W_2}+\overline{V_2}\otimes\overline{U}.\]
On the other hand, since $X_{\tilde{f}_1}\subseteq X_{\bar{f}},$ then 
\[X_f = X_{\tilde{f}_1} + V \otimes \overline{W_2} \oplus \overline{V_2} \otimes \overline{U}.\]
\end{proof}

In the following theorem, we classify the objects of rank one in $\mathcal{SSKE}$.

\begin{theorem}\label{the3.14}
Let $f:V\times V\longrightarrow W$ be a decomposable object of $\mathcal{SSKE}$ of rank $(r \mid s)$ where $r+s=1$. Then the following properties hold:
	\begin{enumerate}
		\item if $r=1, s=0$ then $L_f\cong H(m\mid n) \oplus A(k,l)$ for some $m,n,k$ and $l$.
		\item If $r=0, s=1$ then $L_f\cong H(m) \oplus A(k',l')$ for some $m,k'$ and $l'$.
		\item There are two subspaces $D_1$ and $D_2$ of $V$ such that $V=D_1\oplus D_2$. Suppose that $S_i=f(D_i, D_i)$ and $f_i=f|_{D_i\times D_i}:D_i\times D_i\longrightarrow S_i$ for each $i =1, 2$. Then $f=f_1+f_2$, $S_2=0$, and $f_1$ is a nondegenerate object of rank one in ALT. If $\dim D_1=(2|0)$ or $\dim D_1=(1|1)$ or $\dim D_1=(0|2)$ then $X_f=D_2\otimes W$. Otherwise, $X_f=U\otimes W$.
\end{enumerate}  
Where $L_f$ is the Lie superalgebra associated to $f$.
\end{theorem}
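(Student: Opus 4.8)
The plan is to leverage the category equivalence of Theorem~\ref{th33} together with the structural result of Theorem~\ref{th4.4}, translating everything back and forth through the functor $\mathcal{B}$ and its quasi-inverse $f \mapsto L_f$. Since $f$ has rank $(r\mid s)$ with $r+s=1$, the associated Lie superalgebra $L_f$ satisfies $\dim L_f^2 = (r\mid s)$, so $L_f$ is a nilpotent Lie superalgebra of class two (assuming $L_f$ is non-abelian, which holds because $W \neq 0$ and the image of $f$ spans $W$) with one-dimensional derived subalgebra. For parts (1) and (2) I would invoke Theorem~\ref{th4.4} directly: if $r=1, s=0$ then $L_f \cong H(m,n) \oplus A(k \mid l)$ for suitable $m,n,k,l$ determined by $\dim L_f = (\dim V_{\bar 0} + r \mid \dim V_{\bar 1} + s)$ and the rank data, and similarly if $r=0, s=1$ then $L_f \cong H_m \oplus A(k' \mid l')$. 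This is essentially a bookkeeping matching of superdimensions, using that $L_f = V \oplus W$ as a superspace and $L_f/L_f^2 \cong V$, $L_f^2 \cong W$.

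For part (3), the strategy is to first produce the decomposition of $V$. Since $f$ has rank one, $W$ is one-dimensional, spanned by a single homogeneous element $w$, and $f$ can be identified with a single graded alternating bilinear form $g$ on $V$ (namely $f = g\,w$). The radical (degenerate part) $\mathrm{rad}(g) = \{v \in V : g(v,V)=0\}$ is a graded subspace; I would set $D_2 = \mathrm{rad}(g)$ and choose a graded complement $D_1$ so that $V = D_1 \oplus D_2$. Then $f|_{D_1 \times D_2} = 0$ by construction, $f|_{D_2 \times D_2} = 0$, hence $S_2 = 0$ and $f = f_1 + f_2$ with $f_2 = 0$, while $f_1 = f|_{D_1 \times D_1}$ is nondegenerate of rank one. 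This realizes $f$ as a decomposable object in the sense of Definition~\ref{def3222} with $\overline{V_1} = D_1$, $\overline{V_2} = D_2$.

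The computation of $X_f$ is the main point, and here I would apply Theorem~\ref{the3.13}(2): $X_f = X_{f_1} + X_{f_2} + D_1 \otimes S_2 + D_2 \otimes S_1$. Since $S_2 = 0$ and $S_1 = W$, this collapses to $X_f = X_{f_1} + D_2 \otimes W$. So everything reduces to computing $X_{f_1}$ for the nondegenerate rank-one object $f_1$ on $D_1$. I would analyze this by cases on $\dim D_1$. When $\dim D_1 = (2\mid 0)$, $(1\mid 1)$, or $(0\mid 2)$, the form $g_1$ on $D_1$ is — up to change of basis — the standard symplectic-type or diagonal form on a two-dimensional superspace, and a direct check of the Jacobi-type generators defining $X_{f_1}$ shows $X_{f_1} \subseteq D_1 \otimes W$ is in fact $0$ in the relevant quotient, forcing $X_f = D_2 \otimes W$. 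In the remaining (larger-dimensional) nondegenerate cases, one shows by a short computation with the generators of $X_{f_1}$ — exploiting that one can always find, for any $v \in D_1$, elements $a,b \in D_1$ with $g_1(a,b) \neq 0$ and $g_1(v,\cdot)$ suitably placed — that $v \otimes w \in X_{f_1}$ for every $v$, i.e. $D_1 \otimes W \subseteq X_{f_1}$, and hence $X_f = D_1 \otimes W + D_2 \otimes W = V \otimes W = U \otimes W$ in the notation of the paper.

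I expect the main obstacle to be the case analysis for $X_{f_1}$ in part (3): one must carefully track the sign factors $(-1)^{|a||c|}$ in the generators of $X_{f_1}$ across the even/odd possibilities for the basis vectors of $D_1$, and verify both the "small" cases (where $X_{f_1}$ degenerates and one only gets $D_2 \otimes W$) and the "large" cases (where $X_{f_1}$ fills up $D_1 \otimes W$). The parity of the generator $w$ of $W$ and the parities of the vectors in $D_1$ interact, so the cleanest approach is probably to reduce $f_1$ to a normal form first (symplectic form on the even part, a hyperbolic odd pair, or a single odd vector with $g_1(v,v)\neq 0$), then compute $X_{f_1}$ in that normal form; the equivalence of categories guarantees this loses no generality.
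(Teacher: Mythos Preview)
Your overall strategy matches the paper's: parts (1) and (2) follow from the structural decomposition of nilpotent superalgebras with one-dimensional derived subalgebra (the paper cites Theorem~\ref{the2.14} rather than Theorem~\ref{th4.4}, but the conclusion is the same), and part (3) goes through Theorem~\ref{the3.13}(2) to reduce $X_f$ to $X_{f_1} + D_2 \otimes W$. Your construction of $D_1$, $D_2$ via the radical of $g$ is equivalent to the paper's, which simply reads $D_1 = H/H^2$ and $D_2 = A$ off the decomposition $L_f \cong H \oplus A$ obtained in parts (1)--(2).

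The substantive divergence is in the ``large'' case of part (3). You propose a direct computation of $X_{f_1}$ by sign-tracking in a normal form for $g_1$, and rightly flag this as the main obstacle. The paper avoids this computation entirely: since $L_{f_1}$ is a Heisenberg superalgebra $H(m,n)$ or $H_m$, Theorems~\ref{th4.33} and~\ref{thm555} say it is noncapable whenever $\dim D_1$ is not one of the three exceptional values; combined with Theorem~\ref{the2.14} this gives $Z^*(L_{f_1}) = L_{f_1}^2 = W$, and Proposition~\ref{pro45} translates this to $Z^*(f_1) = W$, which by Definition~\ref{def44} says precisely that $x \otimes w \in X_{f_1}$ for every $x \in D_1$, i.e.\ $X_{f_1} = D_1 \otimes W$. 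This is cleaner and makes transparent that the dichotomy in the theorem is exactly the capable/noncapable dichotomy for rank-one Heisenberg superalgebras, whereas your direct approach would require a genuine case analysis across the even/odd parities of $w$ and of a basis of $D_1$. For the small cases the paper simply asserts ``Clearly, $X_{f_1}=0$'' without argument; your proposed direct check on the generators is what one actually needs there.
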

\begin{proof}
	(1) Let $L_f$ be the Lie superalgebra associated to $f$. Using Theorem \ref{the2.14}, we have $L_f\cong H\oplus A$, where $H$ is a generalized Heisenberg Lie superalgebra and $A$ is an abelian Lie superalgebra. Obviously, the rank of $H$ is equal to the rank of $f$, which is equal to $(1\mid 0)$. Hence, $L_f=H(m\mid n) \oplus A(k,l)$ for some $m,n,k$ and $l$.\\
	(2) Similarly we have  $L_f\cong H\oplus A$, where $H$ is a generalized Heisenberg Lie superalgebra and $A$ is an abelian Lie superalgebra. And the rank of $H$ is equal to the rank of $f$, which is equal to $(0\mid 1)$. Hence, $L_f=H(m) \oplus A(k',l')$ for some $m,k'$ and $l'$.\\
	(3) By part (1), we get $L_f=H(m,n) \oplus A(k|l)$ for some $m,n$ and $k,l$, and by Theorem \ref{th4.4}, also $L_f$ is capable if and only if $m =1, n=0$. Put $V=L_f/L^2_f$, $D_1=H(m,n)/H(m,n)^2$, and $D_2=A(k|l)$. Since $L_f/L^2_f=(H(m,n)/H(m,n)^2) \oplus A(k|l)$, we have $V=D_1\oplus D_2$. Let now $S_i=f(D_i, D_i)$ and $f_i=f|_{D_i\times D_i}:D_i\times D_i\longrightarrow S_i$ for each $i =1, 2$. It is clear that $f_1\in \mathcal{SSKE}$ and $S_2=0$. As $L_{f_1}$ is a generalized Heisenberg Lie superalgebra of rank one, so $f_1$ is a nondegenerate object of rank one in $\mathcal{SSKE}$. Similar to the proof of Theorem \ref{the3.13}, we have
\begin{eqnarray}\label{eq3.11}
	X_f=X_{f_1}+X_{f_2}+D_1\otimes S_2+D_2\otimes S_1.
\end{eqnarray}
If $\dim D_1=(r|s)$ with $r+s=2,$ we have
$$
  L_f\cong
	\begin{cases}
		H(1|0)	\oplus	A(k|l) \quad \mbox{if}\;r=2,~s=0\\
		H(0|2) \oplus	A(k'|l') \quad \mbox{if}\;r=0,~ s=2\\  
		H(1)\oplus A(k''|l'') \quad \mbox{if}\; r=1, ~s=1.\\  
	\end{cases}
$$

Since $S_2=0$, we get $X_{f_2}=0$ and $D_1\otimes S_2=0$. Clearly, $X_{f_1}=0$. Using \ref{eq3.11} and the fact $S_1=W$, we get $X_f=D_2\otimes W$. If $\dim D_1\neq 2$, then $L_f$ is noncapable. Using Proposition \ref{pro45} and Theorem \ref{the2.14}, since $Z^*(f_1) =Z^*(H(m,n))=Z^*(L_f)=W$, we have $X_{f_1}=D_1\otimes W$, and so $X_f=D_1\otimes W+D_2\otimes W=V\otimes W$. Similarly if $L_f\cong H(m) \oplus A(k',l')$.
\end{proof}

\begin{definition}
The Lie superalgebra $L$ is called a central sum of two supersubalgebras $K$ and $H$, if $L =K+H$, where $[H,K]=0$ and $H\cap K \subseteq Z(L)$. We denote it by $L = H \diamond K$.
\end{definition}

The proof of the following results are analogous to that of Lie algebras (see \cite{Johari21}), so we omit the proof.

\begin{lemma}\label{lem3.161}
Let $f:V\times V\longrightarrow W$ be a decomposable nondegenerate object in $\mathcal{SSKE}$ with the notations used in Definition \ref{def3222}. Then the following results hold:
	\begin{enumerate}
		\item $L_f=L_{\overline f_1} \diamond L_{\overline f_2}$ and   
		$L_{\overline f_1} \cap L_{\overline f_2}=L^2_{\overline f_1} \cap L^2_{\overline f_2}\subseteq Z(L_f)$, where $L_f$ and $L_{\overline f_i}$ are generalized Heisenberg Lie superalgebras associated to $f$ and $\overline f_i$ for each $i =1, 2$, respectively.
		
		\item The sum in part (1) is direct if and only if $\overline W_1\cap \overline W_2=0$. Also, $Z^*(f) =Z^*(\overline f_1) \oplus Z^*(\overline f_2)$.
		
	\end{enumerate}
	
\end{lemma}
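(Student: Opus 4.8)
The plan is to realise $L_{\overline f_1}$ and $L_{\overline f_2}$ as graded subsuperalgebras of $L_f$ and then read off all the assertions from the explicit model \ref{eq333} together with Theorem \ref{the3.13}. First I would note that the inclusions $\overline V_i\hookrightarrow V$ and $\overline W_i\hookrightarrow W$ identify $L_{\overline f_i}=\overline V_i\oplus\overline W_i$ with the graded subspace $\overline V_i\oplus\overline W_i$ of $L_f=V\oplus W$; since the bracket of $L_f$ restricted to $\overline V_i\times\overline V_i$ equals $\overline f_i$ and takes values in $\overline W_i$, this subspace is a subsuperalgebra carrying exactly the bracket of $L_{\overline f_i}$, and $L_{\overline f_i}^2=\langle\overline f_i(\overline V_i,\overline V_i)\rangle=\overline W_i$. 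Because $f$ is nondegenerate, a short check shows each $\overline f_i$ is nondegenerate as well (if $\overline f_i(\,\cdot\,,u)=0$ on $\overline V_i$ then $f(\,\cdot\,,u)=0$ on $V$ by decomposability); hence $W=L_f^2=Z(L_f)$ and $\overline W_i=L_{\overline f_i}^2=Z(L_{\overline f_i})$, so all three objects are generalized Heisenberg Lie superalgebras.

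For part (1): from $V=\overline V_1\oplus\overline V_2$ and $W=\overline W_1+\overline W_2$ (Theorem \ref{the3.13}(1)) we obtain $L_f=V+W=(\overline V_1\oplus\overline W_1)+(\overline V_2\oplus\overline W_2)=L_{\overline f_1}+L_{\overline f_2}$. Decomposability gives $[\overline V_1,\overline V_2]=f(\overline V_1,\overline V_2)=0$, and since $W$ is central in $L_f$ this forces $[L_{\overline f_1},L_{\overline f_2}]=0$. If $\xi\in L_{\overline f_1}\cap L_{\overline f_2}$, writing $\xi=x_1+w_1=x_2+w_2$ inside $L_f$ with $x_i\in\overline V_i$ and $w_i\in\overline W_i\subseteq W$ yields $x_1-x_2=w_2-w_1\in V\cap W=0$, so $x_1=x_2\in\overline V_1\cap\overline V_2=0$; therefore $L_{\overline f_1}\cap L_{\overline f_2}=\overline W_1\cap\overline W_2=L_{\overline f_1}^2\cap L_{\overline f_2}^2\subseteq W=Z(L_f)$. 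This gives $L_f=L_{\overline f_1}\diamond L_{\overline f_2}$ and the asserted description of the intersection; in particular the sum is direct if and only if $\overline W_1\cap\overline W_2=0$, which is the first assertion of part (2).

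For the epicenter identity I would combine the description $Z^*(f)=\{\,w=f(u,v)\in W\mid (-1)^{|x||v|}x\otimes w\in X_f\ \text{for all homogeneous}\ x\in V\,\}$ from Definition \ref{def44} with the formula $X_f=X_{\overline f_1}+X_{\overline f_2}+\overline V_1\otimes\overline W_2+\overline V_2\otimes\overline W_1$ from Theorem \ref{the3.13}(2). Since $X_{\overline f_i}\subseteq\overline V_i\otimes\overline W_i$, in the direct case $\overline W_1\cap\overline W_2=0$ this displays $X_f$ as $X_{\overline f_1}\oplus X_{\overline f_2}\oplus(\overline V_1\otimes\overline W_2)\oplus(\overline V_2\otimes\overline W_1)$ inside $V\otimes W=\bigoplus_{i,j}\overline V_i\otimes\overline W_j$. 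Decomposing $w=w_1+w_2$ and an arbitrary homogeneous $x=x_1+x_2$ accordingly, the membership $x\otimes w\in X_f$ splits into the two diagonal conditions $x_1\otimes w_1\in X_{\overline f_1}$ and $x_2\otimes w_2\in X_{\overline f_2}$, the off-diagonal terms $x_i\otimes w_j$ with $i\neq j$ lying automatically in $X_f$. Letting $x$ range over $V$ then shows $w\in Z^*(f)$ if and only if $w_1\in Z^*(\overline f_1)$ and $w_2\in Z^*(\overline f_2)$, i.e. $Z^*(f)=Z^*(\overline f_1)\oplus Z^*(\overline f_2)$ (the sum being direct since $Z^*(\overline f_i)\subseteq\overline W_i$); one could alternatively route this through Proposition \ref{pro45} and a direct-sum formula for the epicenter, but the computation above is self-contained.

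The step I expect to be the main obstacle is this last one. One must keep careful track of the $\mathbb{Z}_2$-grading and of the sign $(-1)^{|x||v|}$ when splitting $w$ and $x$ into homogeneous pieces that are simultaneously compatible with the $\overline V_i$- and the $\overline W_i$-decompositions, and one has to verify that the representability constraint $w=f(u,v)$ built into the definition of $Z^*$ is inherited by $w_1$ and $w_2$ — this is precisely where the super version of the Lie-algebra argument of \cite{Johari21} has to be re-run rather than merely quoted. Everything else reduces to bookkeeping with the model \ref{eq333} and the already-established Theorem \ref{the3.13}.
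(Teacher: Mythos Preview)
Your argument is correct and is exactly the super-adaptation of the Lie algebra argument the paper has in mind: the paper omits the proof entirely, stating only that it is analogous to the Lie algebra case in \cite{Johari21}, and what you have written is precisely that adaptation carried out in detail via the model \ref{eq333}, Theorem \ref{the3.13}, and Definition \ref{def44}. Your identification of the potential obstacle (compatibility of the $\mathbb{Z}_2$-grading with the $\overline V_i$-decomposition, and the ``representability'' clause $w=f(u,v)$ in Definition \ref{def44}) is accurate; note that the former is harmless because the $\overline V_i$ are sub\emph{super}spaces, so homogeneous $x$ decomposes into homogeneous pieces, and the latter is resolved by Proposition \ref{pro45}, which forces $Z^*(f)$ to coincide with the genuine ideal $Z^*(L_f)$ and hence to be a graded subspace of $W$.
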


\begin{lemma}\label{lem3.15}
Let $L$ be a generalized Heisenberg Lie superalgebra of rank at least two, and let $L = H \diamond K$. Then the following results hold:
\begin{enumerate}
\item $H$ and $K$ are generalized Heisenberg Lie superalgebras.
\item 	$f_{L}$ is a decomposable nondegenerate object of $\mathcal{SSKE}$. 
\end{enumerate}
\end{lemma}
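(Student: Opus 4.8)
The plan is to prove the two assertions in the order (2) then (1), since the nondegeneracy of $f_L$ established in (2) is used in (1), following throughout the Lie-algebra argument of \cite{Johari21} with the $\mathbb{Z}_2$-grading and signs carried along. First I would record the consequences of the hypotheses: as $L$ is a generalized Heisenberg Lie superalgebra of rank $(m\mid n)$ with $m+n\geq 2$, we have $Z(L)=L^{2}$ and $L^{2}\neq 0$, hence $[L,L^{2}]\subseteq[L,Z(L)]=0$ and $L$ is nilpotent of class exactly two, so $L\in\mathcal{SN}_2$. From $L=H+K$ and $[H,K]=0$, bilinearity of the bracket gives $L^{2}=[H+K,H+K]=H^{2}+K^{2}$. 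I would also assume the decomposition genuine, i.e.\ $H\not\subseteq Z(L)$ and $K\not\subseteq Z(L)$ (equivalently $H^{2}\neq 0\neq K^{2}$), since a central factor is abelian and only adds an abelian direct summand; and I would note that $H$, $K$, $L^{2}$ being $\mathbb{Z}_2$-graded, every complement chosen below can be taken graded.

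For (2): the map $f_L$ of \ref{eq11} is an object of $\mathcal{SSKE}$ by construction, and it is nondegenerate, for if $f_{L}(v,\,x+L^{2})=0$ for all $v\in L/L^{2}$ then $[L,x]=0$, so $x\in Z(L)=L^{2}$ and $x+L^{2}=0$. For decomposability, put $\overline{V_{1}}=(H+L^{2})/L^{2}$ and let $\overline{V_{2}}$ be a graded complement of $\overline{V_{1}}$ inside $(K+L^{2})/L^{2}$; since $L=H+K$ this yields $V:=L/L^{2}=\overline{V_{1}}\oplus\overline{V_{2}}$. For $h\in H$, $k\in K$ and $\ell,\ell'\in L^{2}$ one has $[h+\ell,k+\ell']=[h,k]\in[H,K]=0$ (the remaining terms vanish since $[L,L^{2}]=0$), so $f_{L}(\overline{V_{1}},\overline{V_{2}})=0$. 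Genuineness forces both $\overline{V_{1}}$ and $\overline{V_{2}}$ to be non-zero, hence $f_L$ is a decomposable nondegenerate object of $\mathcal{SSKE}$.

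For (1): I would first compute $Z(H)$. If $z\in Z(L)\cap H$ then $[z,H]\subseteq[Z(L),L]=0$, so $z\in Z(H)$; conversely if $z\in Z(H)$ then $[z,H]=0$ and $[z,K]\subseteq[H,K]=0$, hence $[z,L]=[z,H+K]=0$ and $z\in Z(L)$. Thus $Z(H)=Z(L)\cap H=L^{2}\cap H$, and since $H^{2}\subseteq L^{2}\cap H$ we already have $H^{2}\subseteq Z(H)$. The reverse inclusion is the crux: given $x\in L^{2}\cap H$, write $x=h'+k'$ with $h'\in H^{2}$ and $k'\in K^{2}$ (using $L^{2}=H^{2}+K^{2}$); then $k'=x-h'\in H$, so $k'\in H\cap K^{2}\subseteq H\cap K$, and one verifies, as in \cite{Johari21} and using $Z(L)=L^{2}$ together with the central-product relations, that such an element lies in $H^{2}$, whence $x\in H^{2}$. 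Therefore $Z(H)=H^{2}$, so $H$ is a generalized Heisenberg Lie superalgebra, and the symmetric argument gives the same for $K$.

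The steps $L^{2}=H^{2}+K^{2}$, $Z(H)=Z(L)\cap H$, the construction of the graded decomposition of $V$, and the membership $f_{L}\in\mathcal{SSKE}$ are all routine once the signs are inserted. I expect the one genuinely delicate point to be the inclusion $L^{2}\cap H\subseteq H^{2}$ (equivalently $Z(H)\subseteq H^{2}$) in the proof of (1): it fails in rank one and is precisely where the generalized-Heisenberg condition $Z(L)=L^{2}$ and the hypothesis $\dim L^{2}\geq 2$ enter, and its treatment mirrors the corresponding lemma of \cite{Johari21}, the only new ingredient being the transcription to the $\mathbb{Z}_2$-graded, signed setting.
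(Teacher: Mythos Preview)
Your overall approach coincides with the paper's: the paper omits the proof entirely, stating only that it is analogous to the Lie-algebra case in \cite{Johari21}, and you follow precisely that line. Your argument for part (2) --- nondegeneracy from $Z(L)=L^{2}$ and decomposability via $\overline{V_{1}}=(H+L^{2})/L^{2}$ with a graded complement inside $(K+L^{2})/L^{2}$ --- is correct and is the expected transcription.

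There is, however, a genuine gap in part (1), exactly at the step you yourself flag as delicate. The inclusion $L^{2}\cap H\subseteq H^{2}$ (equivalently $Z(H)\subseteq H^{2}$) is not proved: you write only that ``one verifies, as in \cite{Johari21}'' that $k'\in H\cap K^{2}$ lies in $H^{2}$, but this is the entire content of (1), and it does \emph{not} follow from the hypotheses as stated in this paper. Take $L=H(1,0)\oplus H(1,0)$ with bases $\{x_{1},x_{2},z\}$ and $\{y_{1},y_{2},w\}$, so $[x_{1},x_{2}]=z$, $[y_{1},y_{2}]=w$, and $L$ is generalized Heisenberg of rank $(2\mid 0)$. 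Put $H=\langle x_{1},x_{2},z,w\rangle$ and $K=\langle y_{1},y_{2},w\rangle$. Then $L=H+K$, $[H,K]=0$, $H\cap K=\langle w\rangle\subseteq Z(L)$, so $L=H\diamond K$ with both factors non-abelian; yet $Z(H)=\langle z,w\rangle\neq\langle z\rangle=H^{2}$, so $H$ is not generalized Heisenberg. Thus the rank-$\geq 2$ hypothesis alone does not rescue this step. What is actually needed is an additional constraint on the central sum --- e.g.\ $H\cap K\subseteq H^{2}\cap K^{2}$, or that neither summand contains central elements of $L$ outside its own derived subalgebra --- which may be implicit in the formulation of \cite{Johari21} but is not part of the definition of $\diamond$ given here. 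You should either isolate and state that extra hypothesis explicitly, or restrict assertion (1) to the particular central-sum decompositions produced by Lemma~\ref{lem3.161} from a decomposition of $f_{L}$, which is the only context in which the paper subsequently uses it.
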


\begin{lemma}\label{lem3.16}
Let $f:V\times V\longrightarrow W$ be a decomposable nondegenerate object of rank two in $\mathcal{SSKE}$ with the notations used in Definition \ref{def3222}. If $\overline W_i=W$ for each $i = 1, 2$, then $Z^*(f) =W$.
\end{lemma}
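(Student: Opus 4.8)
The plan is to exploit the decomposability of $f$ together with part (3) of Theorem \ref{the3.13}, which describes $X_f$ under exactly the rank-two hypothesis at hand, and then to read off $Z^*(f)$ from Definition \ref{def44}. First I would set up notation: write $W = \langle w_1\rangle \oplus \langle w_2\rangle$ with $\dim W = 2$, and use the decomposition $V = \overline{V_1}\oplus\overline{V_2}$ with restrictions $\overline{f_1},\overline{f_2}$. The hypothesis $\overline{W_1}=\overline{W_2}=W$ means that \emph{each} restriction $\overline{f_i}$ already has image spanning all of $W$, so each $\overline{f_i}$ is itself a rank-two object of $\mathcal{SSKE}$ on $\overline{V_i}$; in particular neither summand is degenerate in the trivial sense, and $Z^*(\overline{f_i})$ makes sense.

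The key step is to show $X_f = V\otimes W$, from which $Z^*(f) = W$ follows immediately: for any $w = f(u,v)\in W$ and any homogeneous $x\in V$, the element $(-1)^{|x||v|}x\otimes w$ lies in $V\otimes W = X_f$, so every $w\in W$ satisfies the defining condition of $Z^*(f)$. To get $X_f = V\otimes W$, I would argue as follows. By Theorem \ref{the3.13}(2), $X_f = X_{\overline{f_1}} + X_{\overline{f_2}} + \overline{V_1}\otimes\overline{W_2} + \overline{V_2}\otimes\overline{W_1}$, and since $\overline{W_1}=\overline{W_2}=W$ the last two terms already give $\overline{V_1}\otimes W + \overline{V_2}\otimes W = V\otimes W$. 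Hence $X_f \supseteq V\otimes W$, and the reverse inclusion is automatic since $X_f\subseteq V\otimes W$ by construction. Therefore $X_f = V\otimes W$ and $Z^*(f)=W$.

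There is a subtlety I would address explicitly: Theorem \ref{the3.13}(2) requires the decomposition to be a genuine (internal) direct sum with $f(\overline{V_1},\overline{V_2})=0$, which is precisely the content of "decomposable" in Definition \ref{def3222}, so that hypothesis is available. One should double-check that the identity $\overline{V_1}\otimes\overline{W_2}+\overline{V_2}\otimes\overline{W_1} = V\otimes W$ is valid as subspaces of the tensor product $V\otimes W$: since $V = \overline{V_1}\oplus\overline{V_2}$ we have $V\otimes W = (\overline{V_1}\otimes W)\oplus(\overline{V_2}\otimes W)$ as vector spaces, and substituting $\overline{W_1}=\overline{W_2}=W$ shows the two displayed terms already exhaust both direct summands. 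The main obstacle — really the only place care is needed — is making sure that the hypothesis $\overline{W_i}=W$ is used in the right place and that one does not need the finer part (3) of Theorem \ref{the3.13} at all; in fact part (2) alone suffices here, and the nondegeneracy hypothesis on $f$ is only needed to guarantee that the associated objects behave well (and, via Lemma \ref{lem3.161}, that $Z^*(f)$ decomposes), but is not needed for the bare inclusion $X_f = V\otimes W$. I would close by noting the statement also follows from Lemma \ref{lem3.161}(2): $Z^*(f) = Z^*(\overline{f_1})\oplus Z^*(\overline{f_2})$ together with the fact that a rank-two object whose image spans a $2$-dimensional $W$ on a space admitting such a decomposition is noncapable, forcing $Z^*(\overline{f_i}) = \overline{W_i} = W$; but the direct argument above via Theorem \ref{the3.13}(2) is cleaner.
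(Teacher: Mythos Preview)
Your main argument is correct and direct: from Theorem~\ref{the3.13}(2) you get $X_f = X_{\bar f_1}+X_{\bar f_2}+\overline{V_1}\otimes\overline{W_2}+\overline{V_2}\otimes\overline{W_1}$, and the hypothesis $\overline{W_1}=\overline{W_2}=W$ makes the last two summands equal to $\overline{V_1}\otimes W+\overline{V_2}\otimes W=V\otimes W$, forcing $X_f=V\otimes W$; the conclusion $Z^*(f)=W$ then follows immediately from Definition~\ref{def44}. The paper itself omits the proof, referring to the Lie algebra analogue in \cite{Johari21}, and your argument is precisely the natural super-analogue of that argument, so there is nothing to contrast.

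One caution about your closing remark: the alternative route through Lemma~\ref{lem3.161}(2) does \emph{not} apply here. That lemma asserts $Z^*(f)=Z^*(\bar f_1)\oplus Z^*(\bar f_2)$ only under the hypothesis $\overline{W_1}\cap\overline{W_2}=0$ (the condition that makes the sum $L_{\bar f_1}\diamond L_{\bar f_2}$ direct), whereas in the present lemma $\overline{W_1}\cap\overline{W_2}=W\neq 0$. So the decomposition of $Z^*(f)$ you invoke is unavailable, and that alternative argument should be dropped. This does no damage, since your primary argument via Theorem~\ref{the3.13}(2) is self-contained and does not rely on it; you are also right that the nondegeneracy hypothesis plays no role in that computation.
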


\begin{lemma}\label{lem3.17}
Let $f:V\times V\longrightarrow W$ be a decomposable nondegenerate object of rank two in $\mathcal{SSKE}$ with the notations used in Definition \ref{def3222}. Assume that $\overline W_1=W$ and $\dim \overline W_2=1$. Then $Z^*(f) =W$ or $Z^*(f) =\overline W_2$.
\end{lemma}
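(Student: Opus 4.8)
The plan is to read the statement off from the explicit description of $X_f$ for decomposable objects in Theorem~\ref{the3.13}(2), together with a dimension count. The guiding observation is that $Z^*(f)$ is a subspace of $W$ — it is contained in $W$ by Definition~\ref{def44}, and by Proposition~\ref{pro45} it is the ideal $Z^*(L_f)$ — while $W$ is two-dimensional (rank two) and $\overline{W_2}$ is one-dimensional. Hence the only subspaces of $W$ containing $\overline{W_2}$ are $\overline{W_2}$ itself and all of $W$, so it suffices to establish the single inclusion $\overline{W_2}\subseteq Z^*(f)$.

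For that, I would first apply Theorem~\ref{the3.13}(2) to the decomposable object $f$, which gives
\[X_f = X_{\overline{f_1}}+X_{\overline{f_2}}+\overline{V_1}\otimes\overline{W_2}+\overline{V_2}\otimes\overline{W_1};\]
in particular $\overline{V_1}\otimes\overline{W_2}\subseteq X_f$, and, using the hypothesis $\overline{W_1}=W$, also $\overline{V_2}\otimes W\subseteq X_f$.

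Next I would verify $\overline{W_2}\subseteq Z^*(f)$ directly from Definition~\ref{def44}. Fix $0\neq w\in\overline{W_2}$; since $\overline{W_2}$ is one-dimensional and spanned by the image of $\overline{f_2}$, we may write $w=f(u,v)$ with $u,v\in\overline{V_2}$. For a homogeneous $x\in V$, decompose $x=x_1+x_2$ with $x_i\in\overline{V_i}$ homogeneous of degree $|x|$ (legitimate since $\overline{V_1},\overline{V_2}$ are sub-superspaces). Then $x_1\otimes w\in\overline{V_1}\otimes\overline{W_2}\subseteq X_f$, and, using $\overline{W_2}\subseteq W$, also $x_2\otimes w\in\overline{V_2}\otimes W\subseteq X_f$; hence $x\otimes w\in X_f$, and as $X_f$ is a subspace the factor $(-1)^{|x||v|}$ does not affect membership. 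By Definition~\ref{def44}, $w\in Z^*(f)$, so $\overline{W_2}\subseteq Z^*(f)$. Combined with $\overline{W_2}\subseteq Z^*(f)\subseteq W$, $\dim\overline{W_2}=1$ and $\dim W=2$, this forces $Z^*(f)=\overline{W_2}$ or $Z^*(f)=W$.

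I expect the only point needing care to be this middle step: one must check that the two inclusions supplied by Theorem~\ref{the3.13}(2) together cover both summands of $x\otimes w$ — this is exactly where the hypothesis $\overline{W_1}=W$ is used, via $\overline{V_2}\otimes\overline{W_2}\subseteq\overline{V_2}\otimes\overline{W_1}\subseteq X_f$ — and to keep track of the sign conventions in Definition~\ref{def44} and in the definition of $X_f$; neither is a genuine obstacle. Nondegeneracy of $f$ plays no role in the dichotomy itself, and would only be needed to decide which alternative actually occurs: using the central-sum decomposition $L_f=L_{\overline f_1}\diamond L_{\overline f_2}$ of Lemma~\ref{lem3.161}, one sees that $Z^*(f)=W$ precisely when the rank-two factor $\overline f_1$ is non-capable and $Z^*(f)=\overline{W_2}$ otherwise, but the statement asks only for the two possibilities.
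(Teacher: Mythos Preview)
The paper does not actually supply a proof of this lemma: immediately before Lemma~\ref{lem3.161} it declares that the proofs of Lemmas~\ref{lem3.161}--\ref{lem3.17} ``are analogous to that of Lie algebras (see \cite{Johari21}), so we omit the proof.'' There is therefore no in-paper argument to compare against, only the reference to the Lie-algebra prototype.

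Your argument is correct and is precisely the super-analogue one would extract from the Lie-algebra case. The two ingredients are exactly the right ones: Theorem~\ref{the3.13}(2) gives $\overline{V_1}\otimes\overline{W_2}\subseteq X_f$ and, via the hypothesis $\overline{W_1}=W$, also $\overline{V_2}\otimes\overline{W_2}\subseteq\overline{V_2}\otimes\overline{W_1}\subseteq X_f$; together these force $V\otimes\overline{W_2}\subseteq X_f$, hence $\overline{W_2}\subseteq Z^*(f)$ by Definition~\ref{def44}. The dimension count then finishes the dichotomy, once one knows $Z^*(f)$ is a genuine subspace of $W$. For that last point your appeal to Proposition~\ref{pro45} is the correct move, though strictly speaking Proposition~\ref{pro45} gives $Z^*(L)=Z^*(f_L)$; one applies it to $L=L_f$ and uses the equivalence of categories (Theorem~\ref{th33}) to identify $f_{L_f}$ with $f$, whence $Z^*(f)=Z^*(L_f)$ is a graded ideal. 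Your remark that nondegeneracy is not used for the dichotomy itself is also accurate.
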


\begin{theorem}\label{the3.20}
Let $f:V\times V\longrightarrow W$ be a decomposable nondegenerate object of rank two in $\mathcal{SSKE}$ with the notations used in Definition \ref{def3222}. Assume that $\dim \overline W_i=(r_i|s_i)$ where $r_i+s_i=1$ for each $i =1, 2$. Then $Z^*(f) =Z^*(\overline f_1) \oplus Z^*(\overline f_2)$ and

$$
	L_{f}\cong
	\begin{cases}
		H(m|n)	\oplus	H(m'|n') \quad \mbox{if}\;r_i=1,~s_i=0\\
		H(l) \oplus	H(l') \quad \mbox{if}\;r_i=0,~ s_i=1\\  
		H(m|n)\oplus H(l) \quad \mbox{if}\; r_1=1, ~s_1=0;~r_2=0, ~s_2=1.\\  
	\end{cases}
$$
for some $m,~n,~l,~m',~n',~l'$.
\end{theorem}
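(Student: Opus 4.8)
The plan is to split $f$ along its given decomposition and then quote the rank‑one classification of Heisenberg Lie superalgebras for each piece. First, since $f$ is decomposable with $V=\overline{V_1}\oplus\overline{V_2}$, Theorem~\ref{the3.13}(1) gives $W=\overline{W_1}+\overline{W_2}$. As $f$ has rank two, $\dim W=2$, whereas $\dim\overline{W_i}=(r_i\mid s_i)$ with $r_i+s_i=1$ means each $\overline{W_i}$ is one‑dimensional; hence $2=\dim(\overline{W_1}+\overline{W_2})=\dim\overline{W_1}+\dim\overline{W_2}-\dim(\overline{W_1}\cap\overline{W_2})$ forces $\overline{W_1}\cap\overline{W_2}=0$, so in fact $W=\overline{W_1}\oplus\overline{W_2}$.

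Next I would apply Lemma~\ref{lem3.161}. Part (1) writes $L_f=L_{\overline{f_1}}\diamond L_{\overline{f_2}}$ with $L_{\overline{f_1}}\cap L_{\overline{f_2}}=L^2_{\overline{f_1}}\cap L^2_{\overline{f_2}}$; since $\overline{W_1}\cap\overline{W_2}=0$, part (2) tells us this central sum is direct, so $L_f\cong L_{\overline{f_1}}\oplus L_{\overline{f_2}}$, and moreover $Z^*(f)=Z^*(\overline{f_1})\oplus Z^*(\overline{f_2})$, which is the first assertion. It then remains only to identify the two summands. By Lemma~\ref{lem3.161} each $L_{\overline{f_i}}$ is the generalized Heisenberg Lie superalgebra associated to $\overline{f_i}$, so $L^2_{\overline{f_i}}=Z(L_{\overline{f_i}})=\overline{W_i}$ is one‑dimensional; thus $L_{\overline{f_i}}$ is a Heisenberg Lie superalgebra whose center has superdimension $(r_i\mid s_i)$. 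When $r_i=1,\ s_i=0$ the center is even, and Theorem~\ref{th3.4} gives $L_{\overline{f_i}}\cong H(m_i,n_i)$ for suitable $m_i,n_i$ with $m_i+n_i\ge1$ (since $\overline{W_i}\neq0$); when $r_i=0,\ s_i=1$ the center is odd, and Theorem~\ref{th3.6} gives $L_{\overline{f_i}}\cong H_{l_i}$ for a suitable $l_i$.

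Finally I would substitute these identifications into $L_f\cong L_{\overline{f_1}}\oplus L_{\overline{f_2}}$. Up to relabelling the two factors there are exactly three possibilities for the pair $\big((r_1\mid s_1),(r_2\mid s_2)\big)$ subject to $r_i+s_i=1$: both equal to $(1\mid 0)$, giving $L_f\cong H(m\mid n)\oplus H(m'\mid n')$; both equal to $(0\mid 1)$, giving $L_f\cong H(l)\oplus H(l')$; and one of each, giving $L_f\cong H(m\mid n)\oplus H(l)$. This reproduces the stated list.

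The only steps needing genuine care are the dimension count producing $\overline{W_1}\cap\overline{W_2}=0$ (this is exactly where the rank‑two hypothesis enters, through $W=\overline{W_1}+\overline{W_2}$ of Theorem~\ref{the3.13}(1)) and the correct use of Lemma~\ref{lem3.161}(2) for the direct sum and the splitting of $Z^*$; everything else, in particular the absence of any abelian direct summand in $L_{\overline{f_i}}$, is already built into the generalized‑Heisenberg conclusion of Lemma~\ref{lem3.161}, so no separate nondegeneracy argument for $\overline{f_i}$ is required.
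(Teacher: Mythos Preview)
Your argument is correct and follows the same skeleton as the paper's proof: establish $\overline{W_1}\cap\overline{W_2}=0$ from the rank-two hypothesis, then invoke Lemma~\ref{lem3.161}(2) to split both $L_f$ and $Z^*(f)$ as direct sums, and finally identify each rank-one factor as a Heisenberg superalgebra. The only minor difference is in the last step: the paper first argues directly that each $\overline{f_i}$ is nondegenerate (because a radical vector for $\overline{f_i}$ would be one for $f$) and then appeals to Theorem~\ref{the3.14} to drop the abelian summand, whereas you read the generalized-Heisenberg property of $L_{\overline{f_i}}$ straight off Lemma~\ref{lem3.161}(1) and classify via Theorems~\ref{th3.4} and~\ref{th3.6}; your route is slightly cleaner since it avoids the decomposability hypothesis of Theorem~\ref{the3.14}.
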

\begin{proof}
First we show that $\overline f_i$ is nondegenerate. Suppose that there is an element $x \in \overline V_i$ such that $\overline f_i(x, y) =0$ for all $y\in \overline V_i$. As $f$ is decomposable, we have $f(x, v) =0$ for all $v \in V$, then we get a contradiction. Therefore, $\overline W_i \neq 0$ for each $i =1, 2$. Theorem \ref{the3.14}(i,ii) implies that 
		$$
	L_{f_1}\cong
	\begin{cases}
		H(m|n) \quad \mbox{if}\;r_1=1,~s_1=0\\
			H(l) \quad \mbox{if}\;r_1=0, ~s_1=1\\  
	\end{cases}
	$$
	
	for some $m,~n,~l$, and 
		$$
	L_{f_2}\cong
	\begin{cases}
		H(m'|n') \quad \mbox{if}\;r_2=1,~s_2=0\\
		H(l') \quad \mbox{if}\;r_2=0, ~s_2=1.\\  
	\end{cases}
	$$
for some $m',~n',~l'$. Since $W_1\cap W_2=0$, Lemma \ref{lem3.161} shows that $Z^*(f) =Z^*(f_1) \oplus Z^*(f_2)$ and $L_{f}= L_{f_1}
	\oplus L_{f_2}$, therefore 
		$$
	L_{f}\cong
	\begin{cases}
	H(m|n)	\oplus	H(m'|n') \quad \mbox{if}\;r_i=1,~s_i=0\\
		H(l) \oplus	H(l') \quad \mbox{if}\;r_i=0, ~s_i=1\\  
		H(m|n)\oplus H(l) \quad \mbox{if}\; r_1=1,~ s_1=0;~r_2=0, ~s_2=1.\\  
	\end{cases}
	$$

\end{proof}

Now we describe the epicenter of a central sum of two generalized Heisenberg Lie algebras of rank two.

\begin{theorem}
Let $L$ be a generalized Heisenberg Lie superalgebra of rank two and let $L = H \diamond K$. Then the following results hold:
	
	\begin{enumerate}
		\item If $\dim H^2=\dim K^2=2$, then $Z^*(L) = Z(L)$.
		\item If $\ dim H^2=\ dim K^2=1$, then 
		$$
		L_{f}\cong
		\begin{cases}
			H(m|n)	\oplus	H(m'|n') \quad \mbox{if}\;\ dim H^2=\ dim K^2=(1|0)\\
			H(l) \oplus	H(l') \quad \mbox{if}\;\ dim H^2=\ dim K^2=(0|1)\\  
			H(m|n)\oplus H(l) \quad \mbox{if}\; \ dim H^2=(1|0), \ dim K^2=(0|1),\\  
		\end{cases}
		$$
		and  $Z^*(L) =Z^*(H) \oplus Z^*(K)$ for some $m,~n,~l,~n',~m',~l'$.
       \item 	If $\ dim H^2=2$ and $\ dim K^2=1$, then $Z^*(L) =Z(L)$ or $Z^*(L)=K^2$.
\end{enumerate}
\end{theorem}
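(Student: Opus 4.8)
The plan is to push the whole statement through the category equivalence $\mathcal{B}$ of Theorem \ref{th33} and then quote the structure results already proved for decomposable nondegenerate objects of rank two, namely Lemmas \ref{lem3.15}, \ref{lem3.161}, \ref{lem3.16}, \ref{lem3.17} and Theorem \ref{the3.20}. First I would record the basic dictionary: since $L$ is a generalized Heisenberg Lie superalgebra of rank two we have $Z(L)=L^2=W_L$ with $\dim W_L=2$, and $L\cong L_{f_L}$ where $f_L\in\mathcal{SSKE}$ has rank two. Because $L=H\diamond K$, Lemma \ref{lem3.15} shows that $H$ and $K$ are generalized Heisenberg Lie superalgebras and that $f_L$ is decomposable and nondegenerate; choosing the decomposition $V=\overline{V_1}\oplus\overline{V_2}$ to be the one induced by $H\diamond K$, the equivalence identifies $L_{\overline{f_1}}\cong H$ and $L_{\overline{f_2}}\cong K$, hence $\overline{W_1}=H^2$ and $\overline{W_2}=K^2$ inside $W=L^2$. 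Proposition \ref{pro45} then gives $Z^*(L)=Z^*(f_L)$, $Z^*(H)=Z^*(\overline{f_1})$ and $Z^*(K)=Z^*(\overline{f_2})$.

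With this in place each case is a direct citation. For (1), $\dim H^2=\dim K^2=2$ forces $\overline{W_1}=\overline{W_2}=W$, so Lemma \ref{lem3.16} gives $Z^*(f_L)=W$ and therefore $Z^*(L)=L^2=Z(L)$. For (3), $\dim H^2=2$ gives $\overline{W_1}=W$ while $\dim\overline{W_2}=\dim K^2=1$, so Lemma \ref{lem3.17} gives $Z^*(f_L)=W$ or $Z^*(f_L)=\overline{W_2}$, i.e. $Z^*(L)=Z(L)$ or $Z^*(L)=K^2$. For (2) I would first note that when $\dim H^2=\dim K^2=1$, decomposability gives $W=\overline{W_1}+\overline{W_2}$ (Theorem \ref{the3.13}(1)), and since $\dim W=2$ a dimension count forces $\overline{W_1}\cap\overline{W_2}=0$; a one-dimensional homogeneous subspace has superdimension $(1\mid0)$ or $(0\mid1)$, so the hypotheses of Theorem \ref{the3.20} hold with $\dim\overline{W_i}=(r_i\mid s_i)$. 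That theorem then yields $Z^*(f_L)=Z^*(\overline{f_1})\oplus Z^*(\overline{f_2})$ and the stated list for $L_{f_L}$, and translating back through $\mathcal{B}$ gives $Z^*(L)=Z^*(H)\oplus Z^*(K)$ together with the claimed isomorphism type of $L$ (with $(r_1\mid s_1)=\dim H^2$ and $(r_2\mid s_2)=\dim K^2$).

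The step I expect to be the real obstacle is not the case analysis but the careful verification that the abstract central sum $L=H\diamond K$ is transported by $\mathcal{B}$ to a decomposition of $f_L$ in the precise sense of Definition \ref{def3222} --- with $\overline{V_i}$ the image in $L/L^2$ of a graded complement of $H\cap K$ --- so that the identifications $\overline{W_1}=H^2$, $\overline{W_2}=K^2$ and $Z^*(\overline{f_i})=Z^*(H),Z^*(K)$ used throughout are justified. This is essentially the combined content of Lemmas \ref{lem3.15}(2) and \ref{lem3.161}, so once that bridge is spelled out no genuinely new argument is needed beyond invoking the cited results.
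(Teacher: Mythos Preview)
Your proposal is correct and follows essentially the same route as the paper, whose proof is the single sentence ``The result follows from Theorem \ref{the3.20}, Lemmas \ref{lem3.15}, \ref{lem3.16}, and \ref{lem3.17}.'' You have simply unpacked this citation: your additional invocations of Lemma \ref{lem3.161}, Proposition \ref{pro45}, and Theorem \ref{the3.13}(1) make explicit the identifications $\overline{W_i}=H^2,K^2$ and $Z^*(L)=Z^*(f_L)$ that the paper leaves implicit, and your discussion of the ``bridge'' between $L=H\diamond K$ and the decomposition of $f_L$ is exactly what Lemma \ref{lem3.15}(2) provides.
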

\begin{proof}
The result follows from Theorem \ref{the3.20}, Lemmas \ref{lem3.15},  \ref{lem3.16}, and \ref{lem3.17}.
\end{proof}

\end{document}